\documentclass[english,envcountsect,envcountsame]{svjour3}
\usepackage[T1]{fontenc}
\usepackage[latin9]{inputenc}
\usepackage{babel}
\usepackage{refstyle}
\usepackage{float}
\usepackage{url}
\usepackage{amsmath}
\usepackage{amssymb}
\usepackage{graphicx}
\usepackage[numbers]{natbib}
\usepackage[unicode=true]
 {hyperref}

\makeatletter


\AtBeginDocument{\providecommand\exaref[1]{\ref{exa:#1}}}
\AtBeginDocument{\providecommand\defref[1]{\ref{def:#1}}}
\AtBeginDocument{\providecommand\thmref[1]{\ref{thm:#1}}}
\AtBeginDocument{\providecommand\secref[1]{\ref{sec:#1}}}
\AtBeginDocument{\providecommand\tabref[1]{\ref{tab:#1}}}
\AtBeginDocument{\providecommand\propref[1]{\ref{prop:#1}}}
\AtBeginDocument{\providecommand\appref[1]{\ref{app:#1}}}
\AtBeginDocument{\providecommand\algref[1]{\ref{alg:#1}}}
\AtBeginDocument{\providecommand\asmref[1]{\ref{asm:#1}}}
\AtBeginDocument{\providecommand\corref[1]{\ref{cor:#1}}}
\AtBeginDocument{\providecommand\lemref[1]{\ref{lem:#1}}}
\AtBeginDocument{\providecommand\figref[1]{\ref{fig:#1}}}
\providecommand{\tabularnewline}{\\}
\floatstyle{ruled}
\newfloat{algorithm}{tbp}{loa}
\providecommand{\algorithmname}{Algorithm}
\floatname{algorithm}{\protect\algorithmname}
\RS@ifundefined{subsecref}
  {\newref{subsec}{name = \RSsectxt}}
  {}
\RS@ifundefined{thmref}
  {\def\RSthmtxt{theorem~}\newref{thm}{name = \RSthmtxt}}
  {}
\RS@ifundefined{lemref}
  {\def\RSlemtxt{lemma~}\newref{lem}{name = \RSlemtxt}}
  {}

\usepackage{graphicx}
\usepackage[font=scriptsize]{caption}

\newtheorem{assume}{Assumption}

\newref{lem}{name=Lemma~, Name=Lemma~}
\newref{prop}{name=Proposition~, Name=Proposition~}
\newref{thm}{name=Theorem~, Name=Theorem~}
\newref{cor}{name=Corollary~, Name=Corollary~}
\newref{alg}{name=Algorithm~, Name=Algorithm~}
\newref{def}{name=Definition~, Name=Definition~}
\newref{asm}{name=Assumption~, Name=Assumption~}
\newref{sec}{name=Section~, Name=Section~}
\newref{app}{name=Appendix~, Name=Appendix~}
\newref{fig}{name=Figure~, Name=Figure~}
\newref{tab}{name=Table~, Name=Table~}
\newref{exa}{name=Example~, Name=Example~}

\usepackage{matlab-prettifier}
\usepackage{listings} 
\lstset{style={Matlab-editor}, basicstyle={\ttfamily\footnotesize}, frame=single}

\@ifundefined{showcaptionsetup}{}{%
 \PassOptionsToPackage{caption=false}{subfig}}
\usepackage{subfig}
\makeatother

\begin{document}
\title{Complexity of Chordal Conversion for \\
Sparse Semidefinite Programs with Small Treewidth\thanks{Financial support for this work was provided in part by the NSF CAREER
Award ECCS-2047462 and in part by C3.ai Inc. and the Microsoft Corporation
via the C3.ai Digital Transformation Institute.}}
\titlerunning{Complexity of Chordal Conversion for Sparse SDPs with Small Treewidth}
\author{Richard Y. Zhang }
\institute{R. Y. Zhang \at University of Illinois at Urbana-Champaign\\
\email{ryz@illinois.edu}}
\date{~}
\maketitle
\begin{abstract}
If a sparse semidefinite program (SDP), specified over $n\times n$
matrices and subject to $m$ linear constraints, has an aggregate
sparsity graph $G$ with small treewidth, then chordal conversion
will sometimes allow an interior-point method to solve the SDP in
just $O(m+n)$ time per-iteration, which is a significant speedup
over the $\Omega(n^{3})$ time per-iteration for a direct application
of the interior-point method.  Unfortunately, the speedup is not
guaranteed by an $O(1)$ treewidth in $G$ that is independent of
$m$ and $n$, as a diagonal SDP would have treewidth zero but can
still necessitate up to $\Omega(n^{3})$ time per-iteration. Instead,
we construct an extended aggregate sparsity graph $\overline{G}\supseteq G$
by forcing each constraint matrix $A_{i}$ to be its own clique in
$G$. We prove that a small treewidth in $\overline{G}$ does indeed
guarantee that chordal conversion will solve the SDP in $O(m+n)$
time per-iteration, to $\epsilon$-accuracy in at most $O(\sqrt{m+n}\log(1/\epsilon))$
iterations. This sufficient condition covers many successful applications
of chordal conversion, including the MAX-$k$-CUT relaxation, the
Lovász theta problem, sensor network localization, polynomial optimization,
and the AC optimal power flow relaxation, thus allowing theory to
match practical experience.
\end{abstract}

\global\long\def\one{\mathbf{1}}%
\global\long\def\A{\mathbf{A}}%
\global\long\def\AA{\mathcal{A}}%
\global\long\def\b{\mathbf{b}}%
\global\long\def\c{\mathbf{c}}%
\global\long\def\C{\mathbb{C}}%
\global\long\def\D{\mathbf{D}}%
\global\long\def\E{\mathcal{E}}%
\global\long\def\G{\mathcal{G}}%
\global\long\def\H{\mathbb{H}}%
\global\long\def\Hess{\mathbf{H}}%
\global\long\def\i{\mathbf{i}}%
\global\long\def\J{\mathcal{J}}%
\global\long\def\K{\mathcal{K}}%
\global\long\def\L{\mathbf{L}}%
\global\long\def\N{\mathbb{N}}%
\global\long\def\P{\mathbf{P}}%
\global\long\def\PP{\mathcal{P}}%
\global\long\def\R{\mathbb{R}}%
\global\long\def\S{\mathbb{S}}%
\global\long\def\V{\mathcal{V}}%
\global\long\def\Tim{\mathsf{T}}%
\global\long\def\Mem{\mathsf{M}}%

\global\long\def\Re{\mathrm{Re}\,}%
\global\long\def\Im{\mathrm{Im}\,}%

\global\long\def\inner#1#2{\left\langle #1,#2\right\rangle }%

\global\long\def\o#1{\overline{#1}}%

\global\long\def\rank{\operatorname{rank}}%

\global\long\def\diag{\operatorname{diag}}%

\global\long\def\tr{\operatorname{tr}}%

\global\long\def\vec{\operatorname{vec}}%

\global\long\def\svec{\operatorname{svec}}%
\global\long\def\hvec{\operatorname{hvec}}%

\global\long\def\supp{\operatorname{supp}}%
\global\long\def\smat{\operatorname{smat}}%

\global\long\def\mat{\operatorname{mat}}%

\global\long\def\chol{\operatorname{chol}}%

\global\long\def\IPM{\operatorname{ipm}}%

\global\long\def\interior{\operatorname{Int}}%

\global\long\def\nnz{\operatorname{nnz}}%

\global\long\def\idx{\operatorname{idx}}%
\global\long\def\p{\operatorname{p}}%

\global\long\def\col{\operatorname{col}}%

\global\long\def\wid{\operatorname{wid}}%

\global\long\def\tw{\operatorname{tw}}%

\global\long\def\spar{\operatorname{spar}}%

\global\long\def\clique{\operatorname{clique}}%

\global\long\def\proj{\operatorname{proj}}%

\global\long\def\opt{\mathrm{opt}}%
\global\long\def\ub{\mathrm{ub}}%
\global\long\def\lb{\mathrm{lb}}%
\global\long\def\blk{\mathrm{blk}}%
\global\long\def\V{\mathcal{V}}%

\section{Introduction}

Consider directly applying a general-purpose interior-point method
solver, like SeDuMi~\citep{sturm1999using}, SDPT3~\citep{toh1999sdpt3},
and MOSEK~\citep{mosek2019}, to solve the standard-form semidefinite
program to high accuracy:
\begin{equation}
\min_{X\in\S^{n}}\;\inner CX\quad\text{s.t.}\;X\succeq0,\quad\inner{A_{i}}X\le b_{i}\;\text{for all }i\in\{1,2,\dots,m\}.\tag{SDP}\label{eq:sdp}
\end{equation}
Here, $\S^{n}$ denotes the set of $n\times n$ real symmetric matrices
with inner product $\inner{A_{i}}X=\tr(A_{i}X)$, and $X\succeq0$
means that $X$ is positive semidefinite. At each iteration, the $n\times n$
matrix variable $X$ is generally fully dense, even when problem data
$C,A_{1},\dots,A_{m}\in\S^{n}$ and $b_{1},\dots,b_{m}\in\R$ are
sparse. The per-iteration cost of the solver is usually at least $\Omega(n^{3})$
time, which in practice limits the value of $n$ to no more than a
few thousand.

Instead, to handle $n$ as large as a hundred thousand, researchers
have found empirical success by first performing a simple  preprocessing
step called \emph{chordal conversion} (CC), which was first introduced
in 2001 by \citet*{fukuda2001exploiting}. Suppose that every $S=C-\sum_{i}y_{i}A_{i}\succ0$
factors as $S=LL^{T}$ into a lower-triangular Cholesky factor $L$
that is sparse. It turns out, by defining $J_{j}=\{i:L[i,j]\ne0\}$
as the possible locations of nonzeros in the $j$-th column of $L$,
that (\ref{eq:sdp}) is exactly equivalent\footnote{The equivalence is due to \citet*[Theorem~7]{grone1984positive};
see also \citep[Theorem~2.3]{fukuda2001exploiting} and \citep[Theorem~10.1]{vandenberghe2015chordal}.} to the following
\begin{equation}
\min_{X\in\S^{n}}\quad\inner CX\quad\text{s.t.}\quad\begin{array}{c}
\inner{A_{i}}X\le b_{i}\quad\text{for all }i\in\{1,2,\dots,m\},\\
X[J_{j},J_{j}]\succeq0\quad\text{for all }j\in\{1,2,\dots,n\}.
\end{array}\tag{CC}\label{eq:cc}
\end{equation}
While not immediately obvious, (\ref{eq:cc}) is actually an optimization
over a \emph{sparse} matrix variable $X$, because the matrix elements
that are not indexed by a constraint $\inner{A_{i}}X$ or $X[J_{j},J_{j}]$
can be set to zero without affecting the optimization. Hence, the
point of the reformulation is to reduce the number of optimization
variables, from $\frac{1}{2}n(n+1)$ in (\ref{eq:sdp}) to at most
$\omega\cdot n$ in (\ref{eq:cc}), where $\omega=\max_{j}|J_{j}|$
is defined as the maximum number of nonzeros per column of the Cholesky
factor $L$. 

Clearly, chordal conversion needs $\omega\ll n$ in order to be efficient.
Where this condition holds, the interior-point method solver is consistently
able to solve (\ref{eq:cc}) in just $O(m+n)$ time per-iteration,
thereby solving some of the largest instances of (\ref{eq:sdp}) ever
considered. Unfortunately, a small $\omega$ does not actually guarantee
the empirically observed $O(m+n)$ time figure. Consider the following
counterexample, which has $\omega=1$, but would nevertheless incur
a cost of at least $\Omega(n^{3})$ time per-iteration.
\begin{example}[Diagonal SDP]
\label{exa:diag}Given vectors $c,a_{1},\dots,a_{m}\in\R^{n}$, consider
the following instance of (\ref{eq:sdp}):
\[
\min_{X\in\S^{n}}\;\inner{\diag(c)}X\quad\text{s.t.}\;X\succeq0,\quad\inner{\diag(a_{i})}X\le b_{i}\;\text{for all }i\in\{1,2,\dots,m\}.
\]
One can verify that $J_{j}=\{i:L[i,j]\ne0\}=\{j\}$, so the corresponding
(\ref{eq:cc}) is a linear program over $n$ variables and $m$ constraints:
\[
\min_{x\in\R^{n}}\;\inner cx\quad\text{s.t.}\;x\ge0,\quad\inner{a_{i}}x\le b_{i}\;\text{for all }i\in\{1,2,\dots,m\}.
\]
Despite $\omega=\max_{j}|J_{j}|=1$, if the number of linear constraints
is at least $m=\Omega(n)$, then it would take \emph{any} method at
least $\Omega(n^{3})$ time to take a single iteration. \qed
\end{example}
While a small $\omega\ll n$ is clearly necessary for chordal conversion
to be fast, \exaref{diag} shows that it is not sufficient. In this
paper, we fill this gap, by providing a sufficient condition for a
general-purpose interior-point method to solve (\ref{eq:cc}) in $O(m+n)$
time per-iteration. The sufficient condition covers many successful
applications of chordal conversion, including the MAX-$k$-CUT relaxation~\citep{fukuda2001exploiting,nakata2003exploiting,kim2011exploiting},
the Lovász theta problem~\citep{fukuda2001exploiting,nakata2003exploiting,kim2011exploiting},
sensor network localization~\citep{kim2009exploiting,kim2011exploiting},
polynomial optimization~\citep{parrilo2000structured,lasserre2001global,waki2006sums},
and the AC optimal power flow relaxation in electric grid optimization~\citep{jabr2012exploiting,madani2016promises,Eltved2019,molzahn2019survey},
thus allowing theory to match practical experience.

\subsection{Our results: Complexity of chordal conversion}

In order for chordal conversion to be fast, a well-known \emph{necessary}
condition is for the underlying aggregate sparsity graph, which is
defined $G=(V,E)$ with
\begin{gather}
V=\{1,2,\dots,n\},\quad E=\spar(C)\cup\spar(A_{1})\cup\cdots\cup\spar(A_{m}),\label{eq:Edef}\\
\text{where }\spar(C)\equiv\{(i,j):C[i,j]\ne0\text{ for }i>j\},\nonumber 
\end{gather}
to be ``tree-like'' with a small\emph{ treewidth} $\tw(G)\ll n$.
We defer a formal definition of the treewidth to \defref{tw}, and
only note that the value of $\omega$ is lower-bounded as $\omega\ge1+\tw(G)$.
In other words, while $\omega$ can be decreased by symmetrically
reordering the columns and rows of the data matrices, as in $C\gets\Pi C\Pi^{T}$
and $A_{i}\gets\Pi A_{i}\Pi^{T}$ for some permutation matrix $\Pi$,
actually achieving $\omega\ll n$ is possible only if $\tw(G)\ll n$.
However, as illustrated by \exaref{diag}, even on a graph with $\tw(G)=0$,
and even when using the optimal $\omega=1$, chordal conversion may
still not be fast.

In this paper, we show that a \emph{sufficient} condition for chordal
conversion to be fast is for a supergraph $\o G\supseteq G$, that
additionally captures the correlation between constraints, to also\footnote{Note that $\tw(\o G)\ge\tw(G)$ holds by virtue of $\o G\supseteq G$.}
be ``tree-like'' with a small treewidth $\tw(\o G)\ll n$. Concretely,
we construct the \emph{extended} aggregate sparsity graph $\o G=(V,\overline{E})$
by forcing each constraint matrix $A_{i}$ to be its own clique in
$G$:
\begin{gather}
V=\{1,2,\dots,n\},\quad\overline{E}=\spar(C)\cup\clique(A_{1})\cup\cdots\cup\clique(A_{m})\label{eq:Ebardef}\\
\text{where }\clique(A)=\{(i,j):A[i,k]\ne0\text{ or }A[k,j]\ne0\text{ for some }k\}.\nonumber 
\end{gather}
This is the union between $G$ and the constraint intersection graph~\citep{fulkerson1965incidence}
(or the dual graph~\citep{dong2021nearly} or the correlative sparsity~\citep{waki2006sums,kobayashi2008correlative})
for the rank-1 instance of (\ref{eq:sdp}). In other words, we add
a new edge $(i,j)$ to $E$ whenever $x_{i}$ and $x_{j}$ appear
together in a common constraint $x^{T}A_{k}x\le b_{k}$ for some $k$.
The fact that this contributes $\clique(A_{k})\subseteq\o E$ reflects
the reality that each $x^{T}A_{k}x\le b_{k}$ densely couples all
affected elements of $x$ together, forcing them to be optimized simultaneously.
In contrast, the cost $x^{T}Cx$ can be optimized sequentially over
the elements of $x$, which is why $\clique(C)$ is absent.

Our main result \thmref{total} says that if the extended graph $\o G$
has small treewidth $\tw(\o G)=O(1)$ with respect to $m$ and $n$,
then one can find a fill-reducing permutation $\Pi$ such that, after
reordering the data as $C\gets\Pi C\Pi^{T}$ and $A_{i}\gets\Pi A_{i}\Pi^{T}$,
the resulting instance of (\ref{eq:cc}) is solved by a general-purpose
interior-point method in guaranteed $O(m+n)$ time per-iteration,
over at most $O(\sqrt{m+n}\log(1/\epsilon))$ iterations. In practice,
a ``good enough'' permutation $\Pi$ is readily found by applying
an efficient fill-reducing heuristic to $\o G$, and a primal-dual
interior-point method often converges to $\epsilon$ accuracy in dimension-free
$O(\log(1/\epsilon))$ iterations (without the square-root factor).
If we take these two empirical observations\footnote{In \secref{exp}, we provide detailed numerical experiments to validate
these empirical observations on real-world datasets.} as formal assumptions, then a small treewidth $\tw(\o G)=O(1)$ in
the extended graph $\o G$ is indeed sufficient for chordal conversion
to solve the instance of (\ref{eq:sdp}) in $O((m+n)\log(1/\epsilon))$
empirical time.

In the case that $G$ and $\o G$ coincide, our analysis becomes exact;
a small treewidth in $\o G$ is both necessary and sufficient for
chordal conversion to achieve $O((m+n)\log(1/\epsilon))$ empirical
time. This is the case for the MAX-$k$-CUT relaxation~\citep{goemans1995improved,frieze1997improved}
and the Lovász theta problem~\citep{lovasz1979shannon}, two classic
SDPs that constitute a majority of test problems in the SDPLIB~\citep{borchers1999sdplib}
and the DIMACS~\citep{pataki2002dimacs} datasets. Here, $G=\o G$
because each constraint matrix $A_{i}$ indexes just a single matrix
element, as in $\inner{A_{i}}X=\alpha_{i}\cdot X[j_{i},k_{i}]$. Below,
we write $e_{j}$ as the $j$-th column of the identity matrix, and
$\one=[1,1,\dots,1]^{T}.$
\begin{example}[MAX-$k$-CUT]
\label{exa:maxcut}Let $C$ be the (weighted) Laplacian matrix for
a graph $\G=(\V,\E)$ with $\V=\{1,2,\dots,d\}$. \citet{frieze1997improved}
proposed a randomized algorithm to solve MAX $k$-CUT with an approximation
ratio of $1-1/k$ based on solving
\[
\max_{X\succeq0}\quad\frac{k-1}{2k}\inner CX\quad\text{s.t. }\quad\begin{array}{c}
X[i,i]=1\quad\text{for all }i\in\V\\
X[i,j]\ge\frac{-1}{k-1}\quad\text{for all }(i,j)\in\E
\end{array}
\]
The classic Goemans--Williamson 0.878~algorithm~\citep{goemans1995improved}
for MAXCUT is recovered by setting $k=2$ and removing the redundant
constraint $X[i,j]\ge-1$. \qed
\end{example}
\begin{example}[Lovász theta]
\label{exa:theta}The Lovász number $\vartheta(\G)$ of a graph $\G=(\V,\E)$
with $\V=\{1,2,\dots,d\}$ is the optimal value to the following~\citep{lovasz1979shannon}
\[
\min_{\lambda,y_{i,j}\in\R}\quad\lambda\quad\text{s.t. }\quad\one\one^{T}-\sum_{(i,j)\in\E}y_{i,j}(e_{i}e_{j}^{T}+e_{j}e_{i}^{T})\preceq\lambda I.
\]
Given that $\vartheta(\G)\ge1$ holds for all graphs $G$, dividing
through by $\lambda$ and applying the Schur complement lemma yields
a sparse reformulation
\[
\min_{X\succeq0}\quad\inner{\begin{bmatrix}I & \one\\
\one^{T} & 0
\end{bmatrix}}X\quad\text{s.t. }\quad\begin{array}{c}
X[d+1,d+1]=1,\\
X[i,j]=0\quad\text{for all }(i,j)\in\E.
\end{array}
\]
\qed
\end{example}
We also have $G=\o G$ in the sensor network localization problem,
one of the first successful applications of chordal conversion to
a real-world problem~\citep{kim2009exploiting}, because $\spar(A_{i})=\clique(A_{i})$
holds for all $i$. (Assume without loss of generality that each $a_{k}$
below contains only nonzero elements.)
\begin{example}[Sensor network localization]
\label{exa:sensor}We seek to find unknown sensor points $x_{1},\dots,x_{n}\in\R^{d}$
such that 
\[
\|x_{i}-x_{j}\|=r_{i,j}\quad\text{for all }(i,j)\in N_{x},\qquad\|x_{i}-a_{k}\|=\rho_{i,k}\quad\text{for all }(i,k)\in N_{a}
\]
given a subset $N_{x}$ of distances $r_{i,j}$ between the $i$-th
and $j$-th sensors, and a subset $N_{a}$ of distances $\rho_{i,k}$
between the $i$-th sensor and the $k$-th known anchor point $a_{k}\in\R^{d}$.
Biswas and Ye~\citep{biswas2004semidefinite} proposed the following
SDP relaxation
\[
\min_{X\succeq0}\quad\inner 0X\quad\text{s.t. }\begin{array}{c}
\inner{\begin{bmatrix}1 & -1\\
-1 & 1
\end{bmatrix}}{\begin{bmatrix}X[i,i] & X[i,j]\\
X[i,j] & X[j,j]
\end{bmatrix}}=r_{i,j}^{2}\quad\text{for all }(i,j)\in N_{x},\\
\inner{\begin{bmatrix}1 & -a_{k}^{T}\\
-a_{k} & a_{k}a_{k}^{T}
\end{bmatrix}}{\begin{bmatrix}X[i,i] & X[i,K]\\
X[K,i] & X[K,K]
\end{bmatrix}}=\rho_{i,k}^{2}\quad\text{for all }(i,k)\in N_{a},\\
X[K,K]=I_{d},
\end{array}
\]
where $K=(n+1,\dots,n+d)$.\qed
\end{example}

Our result can also be applied to the chordal conversion of SDPs that
arise in polynomial optimization. The following class of polynomial
optimization covers many of the unconstrained test problems in the
original paper~\citep{waki2006sums} that first introduced chordal
conversion to this setting. Below, a matrix $U\in\R^{p\times p}$
is said to be Hankel if its skew-diagonals are constant, i.e. $U[i,j]=U[i+1,j-1]$
for all $1\le i,j\le p$. We see that $G=\o G$ holds because the
Hankel constraint is dense over its support. 
\begin{example}[Unconstrained polynomial optimization]
Given $C_{i,j}\in\R^{p\times p}$ for $i,j\in\{1,2,\dots,n\}$, consider
the following
\[
\min_{x_{1},\dots,x_{n}}\sum_{j=1}^{n}u_{i}^{T}C_{i,j}u_{j}\text{ where }u_{j}=[1,x_{j},x_{j}^{2},\dots,x_{j}^{p-1}]^{T}.
\]
The basic Lasserre--Parrilo SDP relaxation~\citep{parrilo2000structured,lasserre2001global}
for this problem (without cross terms) reads
\[
\min_{[U_{i,j}]_{i,j=1}^{n}\succeq0}\quad\sum_{j=1}^{n}\inner{C_{i,j}}{U_{i,j}}\quad\text{s.t. }\quad U_{i,i}\text{ is Hankel},\quad U_{i,i}[1,1]=1\quad\text{for all }i,
\]
where $[U_{i,j}]_{i,j=1}^{n}$ denotes an $np\times np$ matrix, comprising
of $n\times n$ blocks of $p\times p$, with $U_{i,j}\in\R^{p\times p}$
as its $(i,j)$-th block. \qed
\end{example}
But the real strength of our result is its ability to handle cases
for which $G\subset\o G$ holds strictly. An important real-world
example is the SDP relaxation of the AC optimal power flow problem~\citep{bai2008semidefinite,lavaei2012zero},
which plays a crucial role in the planning and operations of electricity
grids. 

\begin{example}[AC optimal power flow relaxation]
\label{exa:acopf}Given a graph $\G=(\V,\E)$ on $d$ vertices $\V=\{1,\dots,d\}$,
we say that ${A_{i}}\in\S^{2d}$ implements a\emph{ }power flow constraint
at vertex $k\in\V$ if it can be written in terms of $\alpha_{i,k}\in\S^{2}$
and $\alpha_{i,j}\in\R^{2\times2}$ as:
\[
{A_{i}}=e_{k}e_{k}^{T}\otimes{\alpha_{i,k}}+\frac{1}{2}\sum_{(j,k)\in E}\left[e_{j}e_{k}^{T}\otimes{\alpha_{i,j}}+e_{k}e_{j}^{T}\otimes{\alpha_{i,j}^{T}}\right].
\]
An instance of the AC optimal power flow relaxation is written
\[
\min_{X\succeq0}\quad\sum_{j}\inner{C_{j}}X\quad\text{ s.t. }\quad b_{i}^{\lb}\le\inner{A_{i}}X\le b_{i}^{\ub}
\]
in which every $A_{i}$ and $C_{j}$ implements a power flow constraint
at some vertex $v\in\V$. \qed
\end{example}
It can be verified that $\tw(G)=2\cdot\tw(\G)$ and $\tw(\o G)=2\cdot\tw(\mathcal{G}^{2})$,
where the square graph $\mathcal{G}^{2}$ is defined so that $(i,j)\in\G^{2}$
if and only if $i$ and $j$ are at most a distance of $2$ away in
$\G$.  In fact, knowing that an electric grid $\G$ is ``tree-like''
does not in itself guarantee chordal conversion to be fast, because
it does not imply that $\G^{2}$ would also be ``tree-like''. While
chordal conversion is already widely used to solve the AC optimal
power flow relaxation~\citep{jabr2012exploiting,madani2016promises,Eltved2019,molzahn2019survey},
our finding in \secref{exp} that $\tw(\G^{2})\ll n$ holds for real-world
power systems (see \tabref{treedecomp}) provides the first definitive
explanation for its observed $O((m+n)\log(1/\epsilon))$ empirical
time complexity.

It remains future work to understand the cases where $\tw(G)$ and
$\tw(\o G)$ are very different. In the case of the AC optimal power
flow relaxation, it is not difficult to construct a counterexample
where $\tw(G)=2$ and $\tw(\o G)=n-2$ (set $\G$ to be the star graph,
so that $\G^{2}$ is the complete graph) and observe $\Omega(n^{3})$
per-iteration cost after chordal conversion. This counterexample,
along with the trivial \exaref{diag}, both hint at the possibility
that a small treewidth in $\o G$ is both necessary and sufficient
for $O(m+n)$ time per-iteration, but more work is needed establish
this rigorously.

\subsection{Prior results: Complexity of clique-tree conversion}

Our result is related to a prior work of \citet{zhang2020sparse}
that studied a different conversion method called\emph{ clique-tree
conversion}, also due to \citet{fukuda2001exploiting}. This can best
be understood as a second step of conversion added on top of chordal
conversion. Recall that chordal conversion converts (\ref{eq:sdp})
into (\ref{eq:cc}), and then solves the latter using an interior-point
method. Clique-tree conversion further converts (\ref{eq:cc}) into
the following problem by splitting each submatrix $X_{j}\equiv X[J_{j},J_{j}]$
into its own variable:
\begin{equation}
\min_{X_{1},\dots,X_{n}}\;\sum_{j=1}^{n}\inner{C_{j}}{X_{j}}\text{ s.t. }\begin{array}{c}
\sum_{j=1}^{n}\inner{A_{i,j}}{X_{j}}\le b_{i}\quad\text{for all }i\in\{1,2,\dots,m\},\\
X_{j}\succeq0\quad\text{for all }j\in\{1,2,\dots,n\},\\
\mathcal{N}_{u,v}(X_{v})=\mathcal{N}_{v,u}(X_{u})\quad\text{for all }(u,v)\in\mathcal{T}.
\end{array}\tag{CTC}\label{eq:ctc}
\end{equation}
 The constraint $\mathcal{N}_{u,v}(X_{v})=\mathcal{N}_{v,u}(X_{u})$
is added to enforce agreement between overlapping submatrices, over
the edges of the eponymous clique tree $\mathcal{T}$. 

The point of converting (\ref{eq:cc}) to (\ref{eq:ctc}) is to force
a favorable sparsity pattern in the Schur complement equations solved
at each iteration of the interior-point method, which is known as
the \emph{Schur complement sparsity}~\citep[Section 13.1]{vandenberghe2015chordal}
or the \emph{correlative sparsity}~\citep{kobayashi2008correlative,kim2011exploiting}.
In fact, \citet{zhang2020sparse} pointed out that the Schur complement
sparsity of (\ref{eq:ctc}) is particularly simple to analyze. Under
small treewidth assumptions, they proved that an interior-point method
solves (\ref{eq:ctc}) in guaranteed $O(m+n)$ time per-iteration,
over at most $O(\sqrt{m+n}\log(1/\epsilon))$ iterations; see also
\citet{gu2022faster}. But a major weakness of this result is that
it critically hinges on the second step of conversion, from (\ref{eq:cc})
to (\ref{eq:ctc}). On a basic level, it does not explain the plethora
of numerical experiments showing that interior-point methods are able
to solve (\ref{eq:cc}) directly in $O(m+n)$ time per-iteration \emph{without}
a second conversion step~\citep{waki2006sums,kim2009exploiting,kim2011exploiting,jabr2012exploiting}. 

Indeed, the numerical experiments of \citet{kim2011exploiting} strongly
suggest, for instances of (\ref{eq:ctc}) with favorable Schur complement
sparsity, that the Schur complement sparsity of (\ref{eq:cc}) had
already been favorable in the first place, so the second conversion
step was unnecessary, other than for the sake of a proof. Unfortunately,
the Schur complement sparsity of (\ref{eq:cc}) is much more complicated
than that of (\ref{eq:ctc}). Prior to this work, \citet*{kobayashi2008correlative}
provided a characterization for when the Schur complement sparsity
of (\ref{eq:cc}) is favorable. However, their characterization can
only be checked numerically, in a similar amount of work as performing
a single iteration of the interior-point method, and so gives no deeper
insight on what classes of SDPs can be efficiently solved. Our main
technical contribution in this paper is \thmref{front}, which characterizes
the complicated Schur complement sparsity in terms of the much-simpler
extended sparsity $\o E$. It is this simplicity that allowed us to
analyze many successful applications of chordal conversion, as detailed
in the previous section. 

In practice, the second conversion step from (\ref{eq:cc}) to (\ref{eq:ctc})
results in a massive performance penalty, both in preprocessing time
and in the solution time. In our large-scale experiments in \secref{exp},
the second step of converting from (\ref{eq:cc}) into (\ref{eq:ctc})
can sometimes take more than 100 times longer than the first step
of converting (\ref{eq:sdp}) into (\ref{eq:cc}). Also, we find that
the state-of-the-art solver MOSEK~\citep{mosek2019} takes a factor
of 2 to 100 times more time to solve (\ref{eq:ctc}) than the original
instance of (\ref{eq:cc}). Previously, clique-tree conversion was
used to solve an instance of AC optimal power flow relaxation with
$n=8.2\times10^{4}$ and $m\approx2.5\times10^{5}$ on a high-performance
computing (HPC) node with 24 cores and 240 GB memory in 8 hours~\citep{Eltved2019}.
In this paper, we solve this same problem using chordal conversion
on a modest workstation with 4 cores and 32 GB of RAM in just 4 hours. 

\subsection{Other approaches}

In this paper, we focus on chordal conversion in the context of high-accuracy
interior-point methods. We mention that chordal conversion has also
been used to reduce the per-iteration cost of first-order methods
to $O(m+n)$ time~\citep{sun2014decomposition,zheng2020chordal},
but these can require many iterations to converge to high accuracy.
Also, nonconvex approaches~\citep{burer2003nonlinear,boumal2020deterministic}
have recently become popular, but it remains unclear how these could
be made to benefit from chordal conversion. 

The recent preprint of \citet{gu2022faster} combined the fast interior-point
method of \citet*[Theorem 1.3]{dong2021nearly} and the clique-tree
conversion formulation of \citet{zhang2020sparse} to prove that,
if the extended graph $\o G$ has small treewidth, then there exists
an algorithm to solve (\ref{eq:sdp}) to $\epsilon$ accuracy in $\tilde{O}((m+n)\log(1/\epsilon))$
worst-case time. This improves over our $O(m+n)$ time per-iteration
figure, which must be spread across $O(\sqrt{m+n}\log(1/\epsilon))$
worst-case iterations, for a total of $O((m+n)^{1.5}\log(1/\epsilon))$
worst-case time. However, it is important to point out that these
``fast'' interior-point methods~\citep{dong2021nearly,gu2022faster}
are purely theoretical; their analysis hides numerous leading constants,
and it is unclear whether a real-world implementation could be made
competitive against state-of-the-art solvers. On the other hand, primal-dual
solvers like MOSEK~\citep{mosek2019} typically converge to $\epsilon$
accuracy in dimension-free $O(\log(1/\epsilon))$ iterations (see
our experiments in \secref{exp}), so in practice, our algorithm is
already able to solve (\ref{eq:sdp}) to $\epsilon$ accuracy in $O((m+n)\log(1/\epsilon))$
empirical time.

\section{Preliminaries}

\subsection{Notations and basic definitions}

Write $\R^{m\times n}$ as the set of $m\times n$ matrices with real
coefficients, with associated matrix inner product $\inner AB=\tr A^{T}B$
and norm $\|A\|_{F}=\sqrt{\inner AA}$. Write $\S^{n}\subseteq\R^{n\times n}$
as the set of $n\times n$ real symmetric matrices, meaning that $X=X^{T}$
holds for all $X\in\S^{n}$, and write $\S_{+}^{n}\subseteq\S^{n}$
as the set of symmetric positive semidefinite matrices. Write $\R_{+}^{n}\subseteq\R^{n}$
as the usual positive orthant.

The set of $n\times n$ symmetric matrices with sparsity pattern $E$
can be defined as
\[
\S_{E}^{n}\equiv\left\{ X\in\S^{n}:X[i,j]=X[j,i]=0\quad\text{for all }i\ne j,\quad(i,j)\notin E\right\} .
\]
Conversely, the minimum sparsity pattern of a \emph{symmetric} matrix
$X\in\S^{n}$ is denoted
\[
\spar(X)\equiv\{(i,j):X[i,j]\ne0,\quad i\ge j\}.
\]
We also write $\spar(M)\equiv\spar(M+M^{T})$ for a nonsymmetric matrix
$M$ where there is no confusion. We write $\proj_{E}(M)\equiv\arg\min_{X\in\S_{E}^{n}}\|M-X\|_{F}$
as the projection of $M\in\S^{n}$ onto the sparsity pattern $E$. 

We define the dense sparsity pattern induced by $J\subseteq\{1,2,\dots,n\}$
as follows
\[
\clique(J)=\{(i,j):i,j\in J,\quad i\ge j\}.
\]
We also define the vertex support of a possibly nonsymmetric matrix
$M$ as the following 
\[
\supp(M)=\{i:M[i,j]\ne0\quad\text{for some }j\}.
\]
We write $\clique(A)\equiv\clique(\supp(A))$ where there is no confusion.
This notation is motivated by the fact that $\spar(A)\subseteq\clique(A)$
for $A\in\S^{n}$, and $\spar(PDP^{T})\subseteq\clique(P)$ for $P\in\R^{n\times d}$
and dense $D\in\S^{d}$. 

Let $F$ be a sparsity pattern of order $n$ that \emph{contains all
of its diagonal elements}, as in $F\supseteq\{(i,i):i\in\{1,2,\dots,n\}\}$.
In this case, $\dim(\S_{F}^{n})=|F|$ holds exactly, so we define
a \emph{symmetric vectorization} operator $\svec_{F}:\S_{F}^{n}\to\R^{|F|}$
to implement an isometry with the usual Euclidean space, as in 
\[
\inner{\svec_{F}(X)}{\svec_{F}(Y)}=\inner XY\text{ for all }X,Y\in{\S_{F}^{n}}.
\]
We will explicitly require $\svec_{F}(\cdot)$ to be defined according
to a column-stacking construction
\[
\svec_{F}(X)=(x_{j})_{j=1}^{n}\quad\text{where }x_{j}=\left(X[i,i],\quad\sqrt{2}\cdot(X[i,j]:i>j,\quad(i,j)\in F)\right).
\]
We also define a companion indexing operator $\idx_{F}(\cdot,\cdot)$
to index elements of the vectorization $x=\svec_{F}(X)$:
\begin{gather*}
x[\idx_{F}(i,i)]=X[i,i]\quad\text{for all }i\in\{1,2,\dots,n\},\\
x[\idx_{F}(i,j)]=\sqrt{2}X[i,j]\quad\text{for all }i>j,\quad(i,j)\in F.
\end{gather*}
As we will see later, the correctness of our proof crucially relies
on the fact that $\idx_{F}(\cdot,\cdot)$ implements a \emph{raster
ordering} over the elements of $F$. 

\subsection{Sparse Cholesky factorization}

To solve $Sx=b$ with $S\succ0$ via Cholesky factorization, we first
compute the lower-triangular Cholesky factor $L=\chol(S)$ according
to the following recursive rule 
\[
\chol\left(\begin{bmatrix}\alpha & b^{T}\\
b & D
\end{bmatrix}\right)=\begin{bmatrix}\sqrt{\alpha} & 0\\
\frac{1}{\sqrt{\alpha}}b & \chol\left(D-\frac{1}{\alpha}bb^{T}\right)
\end{bmatrix},\quad\chol(\alpha)=\sqrt{\alpha},
\]
and then solve two triangular systems $Ly=b$ and $L^{T}x=y$ via
back-substitution. If $S$ is sparse, then $L=\chol(S)$ may also
be sparse. The sparsity pattern of $L$ can be directly computed from
the sparsity pattern of $S$, and without needing to examine the numerical
values of its nonzeros.
\begin{definition}[Symbolic Cholesky]
\label{def:symbchol}The \emph{symbolic Cholesky factor} $\chol(E)$
of a sparsity pattern $E$ of order $n$ is defined as $\chol(E)\equiv E_{n+1}$
where $E_{1}=E$ and
\[
E_{k+1}=E_{k}\cup(k,k)\cup\{(i,j):(i,k)\in E_{k},\quad(j,k)\in E_{k}\text{ for }i>j>k\}.
\]
\end{definition}
One can verify that $\chol(\spar(S))=\spar(\chol(S))$. Note that
$\chol(E)$ can be computed from $E$ in $O(|\chol(E)|)$ time and
memory~\citep[Theorem 5.4.4]{george1981computer}. The efficiency
of a sparsity-exploiting algorithm for factorizing $L=\chol(S)$ and
solving $Ly=b$ and $L^{T}x=y$ is determined by the \emph{frontsize}
of the sparse matrix $S$. 
\begin{definition}[Frontsize]
\label{def:frontsize}The \emph{frontsize} $\omega(E)$ of a sparsity
pattern $E$ is defined $\max_{j}|\col_{F}(j)|$ where $F=\chol(E)$
and $\col_{F}(j)\equiv\{j\}\cup\{i>j:(i,j)\in F\}$. The frontsize
$\omega(S)\equiv\omega(\spar(S))$ of a symmetric matrix $S$ is the
frontsize of its minimum sparsity pattern. 
\end{definition}
Intuitively, the frontsize $\omega(S)$ is the maximum number of nonzero
elements in a single column of the Cholesky factor $L=\chol(S)$.
The following is well-known~\citep{george1981computer}.
\begin{proposition}[Sparse Cholesky factorization]
\label{prop:chol}Given $S\in\S^{n},$ $S\succ0$, let $\omega\equiv\omega(S)$.
Sparse Cholesky factorization factors $L=\chol(S)$ in $\Tim$ arithmetic
operations and $M$ units of memory, where 
\begin{align*}
\frac{1}{6}(\omega-1)^{3}+n & \le\Tim\le\omega^{2}\cdot n, & \frac{1}{2}(\omega-1)^{2}+n & \le\Mem\le\omega\cdot n.
\end{align*}
\end{proposition}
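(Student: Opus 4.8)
The plan is to reduce the proposition to two purely combinatorial facts about the symbolic Cholesky factor $F=\chol(\spar(S))$, which has order $n$ and contains its diagonal. Write $c_j\equiv|\col_F(j)|$ for the number of nonzeros in column $j$ of $L=\chol(S)$ (recall $\spar(\chol(S))=\chol(\spar(S))$), so that $\omega=\max_j c_j$ by \defref{frontsize} and $|F|=\sum_{j=1}^{n}c_j$ by the column-stacking definition of $\svec_F$. In the standard accounting of sparse Cholesky \citep{george1981computer} -- charging one operation per diagonal pivot, per column scaling, and per multiply--subtract of a Schur update, so that $\Tim=n+\sum_j(c_j-1)+\tfrac12\sum_j(c_j-1)c_j$ -- the factorization uses $\Mem=|F|$ units of memory and a number of operations satisfying
\[
n+\tfrac12\sum_{j=1}^{n}(c_j-1)c_j\;\le\;\Tim\;\le\;\sum_{j=1}^{n}c_j^{2},
\]
the right inequality because $\sum_j c_j^{2}-\Tim=\tfrac12\sum_j c_j(c_j-1)\ge0$. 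It therefore suffices to bound $\sum_j c_j$ and $\sum_j c_j^{2}$ from above, and $\sum_j(c_j-1)c_j$ from below, in terms of $\omega$ and $n$.

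The upper bounds are immediate: $\Mem=|F|=\sum_j c_j\le\omega n$, while $\sum_j c_j^{2}\le(\max_j c_j)\sum_j c_j=\omega\,|F|\le\omega^{2}n$, hence $\Tim\le\omega^{2}n$.

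The lower bounds hinge on exhibiting a clique of size $\omega$ inside $F$. Fix $j^{\star}$ with $c_{j^{\star}}=\omega$ and put $Q=\col_F(j^{\star})$; I claim $Q$ is a clique in $F$, and more generally that the trailing neighbourhood $Q_j\equiv\{i>j:(i,j)\in F\}$ of every vertex $j$ is a clique in $F$, so that $\col_F(j)=\{j\}\cup Q_j$ is a clique (as $j$ is adjacent to all of $Q_j$). This is the classical ``no fill outside cliques'' property, which I would verify along the construction in \defref{symbchol}: no step $k\ge j$ ever adds an edge incident to $j$ (step $k$ only adds edges $(a,b)$ with $a>b>k$), so $Q_j=\{i>j:(i,j)\in E_j\}$; and step $j$ adds every edge $(i,i')$ with $i,i'>j$ and $(i,j),(i',j)\in E_j$, which makes exactly this set a clique in $E_{j+1}\subseteq F$. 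Granting the claim, $F$ contains the $\binom{\omega}{2}+\omega=\tfrac12\omega(\omega+1)$ entries of $\clique(Q)$ together with the $n-\omega$ diagonal entries of the vertices outside $Q$, so
\[
\Mem=|F|\;\ge\;\tfrac12\omega(\omega+1)+(n-\omega)\;=\;\tfrac12\omega(\omega-1)+n\;\ge\;\tfrac12(\omega-1)^{2}+n.
\]
For the arithmetic lower bound, list $Q=\{p_1<p_2<\cdots<p_\omega\}$; because $Q$ is a clique, $\col_F(p_i)\supseteq\{p_i,p_{i+1},\dots,p_\omega\}$, so $c_{p_i}\ge\omega-i+1$ and therefore
\[
\tfrac12\sum_{j=1}^{n}(c_j-1)c_j\;\ge\;\sum_{i=1}^{\omega}\tfrac12(\omega-i)(\omega-i+1)\;=\;\tfrac16(\omega-1)\omega(\omega+1)\;\ge\;\tfrac16(\omega-1)^{3},
\]
the last step because $(\omega-1)\omega(\omega+1)-(\omega-1)^{3}=(3\omega-1)(\omega-1)\ge0$ for $\omega\ge1$. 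Combined with $\Tim\ge n+\tfrac12\sum_j(c_j-1)c_j$, this yields $\Tim\ge\tfrac16(\omega-1)^{3}+n$.

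I expect the only genuinely non-routine ingredient to be the clique property of $Q$; everything else is bookkeeping with binomial coefficients. There, the induction over \defref{symbchol} must be arranged so that it correctly certifies \emph{when} a vertex's trailing neighbourhood stops changing -- namely at that vertex's own elimination step -- and that the completion rule cliquifies precisely that neighbourhood. A minor secondary issue is to fix a convention for the sparse-Cholesky operation count under which the additive $n$ in each bound is met with equality rather than merely up to a constant; the count $\Tim=n+\sum_j(c_j-1)+\tfrac12\sum_j(c_j-1)c_j$ used above is one such choice, consistent for instance with the diagonal case $\omega=1$, where $\Tim=\Mem=n$.
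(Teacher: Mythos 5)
Your proposal is correct and reaches exactly the bounds in the statement. It uses the same exact operation and storage counts as the paper ($\Tim=\sum_j\frac12\omega_j(\omega_j+1)$, $\Mem=\sum_j\omega_j$ with $\omega_j=|\col_F(j)|$), and the same one-line upper bounds by $\omega_j\le\omega$. The genuine difference is in the lower-bound argument. The paper derives the lower bounds from the claim $\omega_{j+1}\ge\omega_j-1$ for consecutive columns, which, as literally stated, is false: for $n=4$ with $E=\{(3,1),(4,1)\}$ plus the diagonal, one gets $\omega_1=3$ and $\omega_2=1$, so $\omega_2\not\ge\omega_1-1$. (The intended monotonicity holds only along the elimination tree, not along consecutive indices.) Your argument sidesteps this by proving the structurally correct statement — that $\col_F(j)$ is a clique of $F$ for every $j$, which you verify directly from \defref{symbchol} — and then observing that the vertices $p_1<\dots<p_\omega$ of a maximum clique satisfy $c_{p_i}\ge\omega-i+1$. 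This is the right way to exhibit the descending chain $\omega,\omega-1,\dots,1$ of column sizes that the paper's display asserts, and it is essentially the etree/supernodal nestedness argument in disguise. Your bookkeeping ($\frac12\omega(\omega-1)+n\ge\frac12(\omega-1)^2+n$ and $\frac16(\omega-1)\omega(\omega+1)\ge\frac16(\omega-1)^3$ for $\omega\ge1$) checks out. In short: same high-level strategy (extract $\omega$ columns with sizes at least $\omega,\omega-1,\dots,1$), but your clique-based justification is the rigorous one whereas the paper's consecutive-index inequality needs the correction you supplied.
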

\begin{proof}
Let $\omega_{j}\equiv|\col_{F}(j)|$. By inspection, $\Tim=\sum_{j=1}^{n}\frac{1}{2}\omega_{j}(\omega_{j}+1)$
and $\Mem=\sum_{j=1}^{n}\omega_{j}$. The bounds follow by substituting
$\omega\ge\omega_{j+1}\ge\omega_{j}-1$. Indeed,
\[
\Mem=\sum_{j=1}^{n}\omega_{j}\ge\underbrace{\omega+(\omega-1)+\cdots+1}_{\omega\text{ terms}}+\underbrace{1+\cdots+1}_{n-\omega\text{ terms}}=\frac{1}{2}\omega(\omega+1)+(n-\omega)
\]
and similarly $\Tim=\sum_{j=1}^{n}\frac{1}{2}\omega_{j}(\omega_{j}+1)\ge\frac{1}{6}\omega(\omega+1)(\omega+2)+(n-\omega).$
\qed
\end{proof}

Note that \propref{chol} is sharp up to small additive constants:
the upper-bound is essentially attained by banded matrices of bandwidth
$\omega$, while the lower-bound is essentially attained by a matrix
that contains a single dense block of size $\omega$.

\subsection{Minimum frontsize and treewidth}

\global\long\def\Perm{\mathrm{Perm}}%
The cost of solving $Sx=b$ with sparse $S\succ0$ can usually be
reduced by first permuting the rows and columns of the matrix symmetrically,
and then solving $(\Pi S\Pi^{T})\Pi x=\Pi y$ for some permutation
matrix $\Pi$. For $E=\spar(S)$, we write $E_{\Pi}\equiv\spar(\Pi S\Pi^{T})$
to denote its permuted sparsity pattern. It is a fundamental result
in graph theory and linear algebra that the problem of minimizing
the frontsize $\omega(E_{\Pi})$ over the set of $n\times n$ permutation
matrices $\Pi\in\Perm(n)$ is the same problem as computing the treewidth
of the graph $G=(V,E)$.

\begin{definition}[Treewidth]
\label{def:tw}A tree decomposition of a graph $G=(V,E)$ is a pair
$(\{J_{j}\},T)$ in which each \emph{bag} $J_{j}\subseteq V$ is a
subset of vertices and $T$ is a tree such that:
\begin{itemize}
\item (Vertex cover) $\bigcup_{j}J_{j}=V$;
\item (Edge cover) $\bigcup_{j}(J_{j}\times J_{j})\supseteq E$;
\item (Running intersection) $J_{i}\cap J_{j}\subseteq J_{k}$ for every
$k$ on the path of $i$ to $j$ on $T$.
\end{itemize}
The \emph{width} of the tree decomposition is $\max_{j}|J_{j}|-1$.
The \emph{treewidth} of $G$, denoted $\tw(G)$, is the minimum width
over all valid tree decompositions on $G$.
\end{definition}
The connection is an immediate corollary of the following result,
which establishes an equivalence between tree decompositions and the
sparsity pattern of Cholesky factors. 
\begin{proposition}[Perfect elimination ordering]
Given a sparsity pattern $E$ of order $n$, let $G=(V,E)$ where
$V=\{1,2,\dots,n\}$. For every tree decomposition of $G$ with width
$\tau$, there exists a perfect elimination ordering $\Pi\in\Perm(n)$
such that $\omega(E_{\Pi})=1+\tau$. 
\end{proposition}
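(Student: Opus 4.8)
The plan is to convert the given tree decomposition into a chordal supergraph of $G$ on which a suitable $\Pi$ is immediate, and then to argue that descending from this supergraph back down to the sparser pattern $E$ can only decrease the frontsize. Concretely, I would: (i) fill every bag $J_j$ into a clique, obtaining the chordal completion $\widehat G=(V,\widehat E)$ with $\widehat E=E\cup\bigcup_j\clique(J_j)$, whose largest clique has exactly $\tau+1$ vertices; (ii) read off a fill-free (perfect) elimination ordering $\Pi$ from $\widehat G$, for which $\chol(\widehat E_{\Pi})=\widehat E_{\Pi}$ and $\omega(\widehat E_{\Pi})=1+\tau$; and (iii) transfer the frontsize bound from $\widehat E$ down to $E$ using monotonicity of symbolic Cholesky.

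For step (i), I would use the Helly property of subtrees of a tree: each vertex $v$ occupies the subtree $\{j:v\in J_j\}$ of $T$, pairwise-intersecting subtrees of a tree have a common node, and hence every clique of $\widehat G$ sits inside a single bag; since each bag is itself a clique of $\widehat G$, the largest clique of $\widehat G$ has size exactly $\max_j|J_j|=\tau+1$. For step (ii), I would construct $\Pi$ by induction on $n$, peeling leaf bags off $T$: given a leaf bag $J_\ell$ with unique $T$-neighbour $J_p$, the running-intersection property forces any $v\in J_\ell\setminus J_p$ to appear in no other bag, so every $\widehat G$-neighbour of $v$ lies inside the clique $J_\ell$, i.e. $v$ is simplicial; order such a $v$ first, delete it from $V$ and from $J_\ell$, and recurse on the shrunken decomposition (still of width $\le\tau$). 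Eliminating a simplicial vertex introduces no fill in the recursion of \defref{symbchol}, so $\chol(\widehat E_{\Pi})=\widehat E_{\Pi}$; moreover at every step the eliminated column together with its later neighbours, namely $\col_{\widehat E_{\Pi}}(j)$, is a clique of $\widehat G$, hence of size at most $\tau+1$, with the maximum over $j$ equal to the clique number $\tau+1$. Thus $\omega(\widehat E_{\Pi})=1+\tau$. (Alternatively, steps (i)--(ii) can be replaced by quoting the classical equivalence that a graph is chordal iff it admits a perfect elimination ordering iff it admits a clique tree \citep{vandenberghe2015chordal}.)

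Finally, for step (iii): $E\subseteq\widehat E$ gives $E_{\Pi}\subseteq\widehat E_{\Pi}$, and the recursion in \defref{symbchol} is monotone under inclusion, so $\chol(E_{\Pi})\subseteq\chol(\widehat E_{\Pi})=\widehat E_{\Pi}$; therefore $\col_{\chol(E_{\Pi})}(j)\subseteq\col_{\widehat E_{\Pi}}(j)$ for all $j$, and $\omega(E_{\Pi})\le1+\tau$. The complementary bound $\omega(E_{\Pi})\ge1+\tw(G)$ holds for \emph{every} permutation, since $(V,\chol(E_{\Pi}))$ is chordal with $\Pi$ as a perfect elimination ordering, its largest clique has $\omega(E_{\Pi})$ vertices, and it contains $G$, so $\tw(G)\le\omega(E_{\Pi})-1$; the two bounds coincide once the given decomposition is taken to realise $\tau=\tw(G)$. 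I expect the combinatorial content of step (ii) to be the only real work --- verifying that filling bags into cliques yields a chordal graph whose largest clique has exactly $\tau+1$ vertices, and that a leaf-peeling ordering of it is fill-free. Step (i) is then a one-line consequence of the subtree-Helly property, and step (iii) is bookkeeping once the monotonicity built into \defref{symbchol} is noted.
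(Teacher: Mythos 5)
The paper itself does not prove this proposition; it explicitly defers to the texts of George--Liu and Vandenberghe--Andersen, so there is no internal proof to compare against. Your argument is the classical one those references use: pad each bag of the tree decomposition into a clique to obtain a chordal supergraph $\widehat G$, invoke the subtree-Helly property to identify the clique number of $\widehat G$ with $\tau+1$, read off a perfect elimination ordering by leaf-peeling, and push the frontsize bound from $\widehat E$ down to $E$ by the monotonicity built into the symbolic Cholesky recursion (which is exactly \propref{del} later in the paper). All of that is sound and captures the substance.

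Two small points are worth cleaning up. First, in the leaf-peeling step you need to handle the degenerate case $J_\ell\setminus J_p=\emptyset$: then there is no simplicial vertex private to $J_\ell$, and you should simply delete the redundant bag $J_\ell$ from $T$ and re-pick a leaf; otherwise the recursion stalls. Second, you correctly observe at the end that the argument actually delivers $\omega(E_\Pi)\le 1+\tau$ rather than equality, and that equality $\omega(E_\Pi)=1+\tau$ can genuinely fail when $\tau>\tw(G)$ (e.g.\ a path on three vertices with the single-bag decomposition of width $2$ still gives $\omega(E_\Pi)=2$). The stated equality in the proposition is therefore a harmless overstatement, and what is actually used downstream in \corref{treewidth} is only the inequality $\omega(E_\Pi)\le 1+\tau$ together with the trivial lower bound; your write-up already makes this distinction, so no substantive correction is needed, but if you were to submit this proof you should state the conclusion as $\omega(E_\Pi)\le 1+\tau$, with equality when $\tau=\tw(G)$.
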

We defer a proof to the texts~\citep{george1981computer,vandenberghe2015chordal},
and only note that, given a tree decomposition of width $\tau$, the
corresponding perfect elimination ordering $\Pi$ can be found in
$O((1+\tau)\cdot n)$ time. 
\begin{corollary}
\label{cor:treewidth}We have $1+\tw(G)=\min_{\Pi\in\mathrm{Perm}(n)}\omega(E_{\Pi})$.
\end{corollary}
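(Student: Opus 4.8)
The identity to establish is $1+\tw(G) = \min_{\Pi \in \Perm(n)} \omega(E_\Pi)$, and the plan is to prove the two inequalities separately. The inequality $\min_\Pi \omega(E_\Pi) \le 1+\tw(G)$ is immediate from the preceding proposition: applying it to a width-minimizing tree decomposition of $G$, for which $\tau = \tw(G)$, produces a permutation $\Pi$ with $\omega(E_\Pi) = 1 + \tw(G)$, so the minimum over $\Pi$ is no larger.

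For the reverse inequality, I would show that $1 + \tw(G) \le \omega(E_\Pi)$ holds for \emph{every} $\Pi \in \Perm(n)$. Fix such a $\Pi$, write $F \equiv \chol(E_\Pi)$, and consider the candidate tree decomposition of the relabeled graph $G_\Pi \equiv (V, E_\Pi)$ whose bags are the Cholesky columns $J_j \equiv \col_F(j)$ for $j \in \{1,\dots,n\}$ and whose tree $T$ is the elimination tree: each non-root column $j$ is assigned the parent $p(j) \equiv \min(J_j \setminus \{j\})$, and the roots (one per connected component of $G_\Pi$) are joined into a single tree arbitrarily. Its width is $\max_j |J_j| - 1 = \omega(E_\Pi) - 1$ by \defref{frontsize}. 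Since relabeling vertices leaves the treewidth unchanged, establishing that this is a valid tree decomposition would give $\tw(G) = \tw(G_\Pi) \le \omega(E_\Pi) - 1$, and minimizing over $\Pi$ completes the argument.

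It remains to verify the three axioms of \defref{tw}. Vertex cover holds because $j \in J_j$, and edge cover holds because $E_\Pi \subseteq \chol(E_\Pi) = F$ and any $(i,j) \in F$ with $i > j$ has $i, j \in J_j$. The crux --- and the step I expect to be the main obstacle --- is running intersection, which I would derive from the standard \emph{column-nesting} property of symbolic Cholesky factors, namely $J_j \setminus \{j\} \subseteq J_{p(j)}$ for every non-root $j$. This is read off from \defref{symbchol}: processing column $j$ makes $J_j \setminus \{j\}$ into a clique in the accumulated pattern $F$, and since $p(j)$ is the least-indexed vertex of that clique, every other vertex of $J_j \setminus \{j\}$ gains an edge to $p(j)$ and hence lies in $\col_F(p(j))$. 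Granting this, fix a vertex $v$ and let $B_v \equiv \{j : v \in J_j\}$; then $v = \max B_v$ (as $v \in J_j$ with $v \ne j$ forces $v > j$), and $B_v$ is closed under $j \mapsto p(j)$ as long as $j \ne v$ (since then $v \in J_j \setminus \{j\} \subseteq J_{p(j)}$, while $j < p(j) \le v$ keeps us inside $B_v$). Therefore $B_v$ induces a connected subtree of $T$ rooted at $v$, which is precisely the running-intersection condition; the property for vertices in distinct components is vacuous since their bags are disjoint. The column-nesting property is classical, so I would ultimately just cite \citep{george1981computer,vandenberghe2015chordal} rather than reprove it in full detail.
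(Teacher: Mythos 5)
Your proof is correct and essentially reproduces the standard textbook argument to which the paper defers. The easy direction ($\min_\Pi \omega(E_\Pi)\le 1+\tw(G)$) is exactly the preceding proposition applied to a minimum-width decomposition. For the reverse direction you construct a tree decomposition from the Cholesky column sets $J_j=\col_F(j)$ using the elimination tree, and the verification is sound: vertex and edge cover are immediate, and the running-intersection axiom reduces, as you observe, to the column-nesting property $J_j\setminus\{j\}\subseteq J_{p(j)}$ with $p(j)=\min(J_j\setminus\{j\})$, which follows directly from \defref{symbchol} since eliminating column $j$ turns $J_j\setminus\{j\}$ into a clique of $F$. The book-keeping with $B_v=\{j:v\in J_j\}$ being closed under $p$ and having $\max B_v=v$ correctly shows each $B_v$ is a connected subtree. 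One small cleanup worth noting: you need not worry separately about joining the forest roots and disconnected components, since you could equivalently observe that the paper's \propref{zf->rip} (proved later for a different purpose, with parent pointer $\p(j)=n$ when $|J_j|=1$) already packages column nesting into a single tree with the sorted running-intersection property; citing it would shorten the argument. But as written, your treatment of distinct components is correct because fill edges do not cross components, so bags in different components are disjoint and running intersection is vacuous there.

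The one thing I'd flag is stylistic rather than substantive: the paper's stated proposition only asserts one implication (tree decomposition $\Rightarrow$ perfect elimination ordering), calling the full statement an ``equivalence'' while deferring to \citep{george1981computer,vandenberghe2015chordal}, so your reverse direction genuinely supplies content that the paper leaves implicit. You could avoid re-deriving column nesting by appealing directly to \propref{zf->rip}, since conditions (1) and (2) of the sorted running intersection property in \defref{rip} are precisely what is needed to make $(\{J_j\}, T)$ with $T$ the parent-pointer tree into a valid tree decomposition of $G_\Pi$, yielding $\tw(G)=\tw(G_\Pi)\le\max_j|J_j|-1=\omega(E_\Pi)-1$.
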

As a purely theoretical result, if we assume that $\tw(G)=O(1)$ with
respect to the number of vertices $n$, then a choice of $\Pi\in\Perm(n)$
that sets $\omega(E_{\Pi})=O(1)$ can be found in $O(n)$ time~\citep{fomin2018fully}
(and so the problem is no longer NP-hard). In practice, it is much
faster to use simple greedy heuristics~\citep{amestoy2004algorithm},
which often find ``good enough'' choices of $\Pi$ that yield very
small values of $\omega(E_{\Pi})$, without a rigorous guarantee of
quality. 

\subsection{General-purpose interior-point methods}

The basic approach for solving an SDP using a general-purpose solver
is to reformulate the problem into the primal or the dual of the standard-form
linear conic program 
\[
\min_{x\in{\R^{q}}}\{\c^{T}x:\A x={\b},x\in\K\}\ge\max_{y\in{\R^{p}}}\{\b^{T}y:\c-\A^{T}y\in{\K_{*}}\}
\]
 where the data are the matrix $\A\in\R^{p\times q}$, vectors $\b\in\R^{p}$
and $\c\in\R^{q}$, and the problem closed convex cone $\K\subseteq\R^{q}$,
and the notation $\K_{*}$ means the dual cone of $\K$. We specify
the following basic assumptions on this problem to ensure that it
can be solved in polynomial time using a self-dual embedding~\citep{de2000self}.
Below, we denote $\one_{\K}$ as the identity element on the cone
$\K$, and recall that every semidefinite cone is self-dual $\K=\K_{*}$. 
\begin{definition}[Standard-form SDP]
\label{def:clp}We say that the problem data $\A\in\R^{p\times q},$
$\b\in\R^{p},$ $\c\in\R^{q},$ and $\K\subseteq\R^{q}$ describe
an SDP in $(n,\omega)$-\emph{standard form} if:
\begin{enumerate}
\item (Dimensions) The cone $\K=\svec(\S_{+}^{\omega_{1}})\times\cdots\times\svec(\S_{+}^{\omega_{\ell}})$
is the Cartesian product of semidefinite cones whose orders $\omega_{1},\omega_{2},\dots,\omega_{\ell}$
satisfy 
\[
\omega=\max_{i}\omega_{i},\qquad n=\sum_{i=1}^{\ell}\omega_{i},\qquad{q}=\frac{1}{2}\sum_{i=1}^{\ell}\omega_{i}(\omega_{i}+1).
\]
\item (Linear independence) $\A^{T}y=0$ holds if and only if $y=0$. 
\item (Strong duality is attained) There exist a choice of $x^{\star},y^{\star}$
that satisfy
\begin{gather*}
\A x^{\star}=\b,\quad x^{\star}\in\K,\quad\c-\A^{T}y^{\star}\in\K,\quad\c^{T}x^{\star}=\b^{T}y^{\star}.
\end{gather*}
\end{enumerate}
\end{definition}
\begin{definition}[General-purpose solver]
\label{def:ipm}We say that $\IPM$ implements a \emph{general-purpose
solver} if it satisfies the following conditions
\begin{enumerate}
\item (Iteration count) Given data $(\A,\b,\c,\K)$ in $(n,\omega)$-standard-form,
calling $(x,y)=\IPM(\epsilon,\A,\b,\c,\K)$ yields iterates $(x,y)\in\K\times{\R^{p}}$
that satisfy the following, in at most $O(\sqrt{n}\log(1/\epsilon))$
iterations
\[
\|\A x-\b\|\le\epsilon,\quad{\c-\A^{T}y+\epsilon\cdot\one_{\K}\in\K,}\quad\c^{T}x-\b^{T}y\le\epsilon\cdot n.
\]
\item (Per-iteration costs) Each iteration costs an overhead of $O(\omega^{2}n)$
time and $O(\omega n)$ memory, plus the cost of solving $O(1)$ instances
of the \emph{Schur complement equation} 
\[
\A\nabla^{2}f(w)\A^{T}\Delta y=r,\qquad f(w)=-\log\det(w)
\]
by forming $\Hess=\A\nabla^{2}f(w)\A^{T}$, factoring $\L=\chol(\Pi\Hess\Pi^{T})$
and then solving $\L z={\Pi}r$ and $\L^{T}(\Pi\Delta y)=z$. Here,
the fill-reducing permutation $\Pi$ is required to be no worse than
the natural ordering, as in $\omega(\Pi\Hess\Pi^{T})\le\omega(\Hess)$.
\end{enumerate}
\end{definition}
Note that \defref{ipm} is rigorously satisfied by SeDuMi~\citep{sturm1999using,sturm2002implementation},
SDPT3~\citep{toh1999sdpt3,tutuncu2003solving}, and MOSEK~\citep{andersen2000mosek,andersen2003implementing,mosek2019}.
Given that the correctness of our overall claims crucially depends
on the characterization in \defref{ipm}, we state a concrete interior-point
method in \appref{ipm} that implements these specifications. 

\section{Main results}

\begin{algorithm}[t]
\caption{\label{alg:myalg}Chordal conversion}

\textbf{Input.} Accuracy parameter $\epsilon>0$, problem data $C,A_{1},A_{2},\dots,A_{m}\in\S^{n}$,
$b\in\R^{m}$, fill-reducing permutation $\Pi$.

\textbf{Output.} Approximate solutions $U\in\R^{n\times\omega}$ and
$v\in\R^{m}$ to the following primal-dual pair:
\[
\min_{U}\left\{ \inner C{UU^{T}}:\inner{A_{i}}{UU^{T}}\le b_{i}\text{ for all }i\right\} \ge\max_{v\le0}\left\{ \inner bv:{\textstyle \sum_{i}}v_{i}A_{i}\preceq C\right\} .
\]

\textbf{Algorithm.}
\begin{enumerate}
\item (Symbolic factorization) Pre-order all data matrices $\tilde{A}_{i}=\Pi A_{i}\Pi^{T}$
and $\tilde{C}=\Pi C\Pi^{T}$. Compute the permuted aggregate sparsity
pattern $E=\spar(\tilde{C})\cup\bigcup_{i}\spar(\tilde{A}_{i})$,
its lower-triangular symbolic Cholesky factor $F=\chol(E)$, and define
the following
\[
J_{j}=\col_{F}(j)\equiv\{j\}\cup\{i>j:(i,j)\in F\},\quad\omega_{j}\equiv|J_{j}|,\quad\omega\equiv\max_{j}\omega_{j}.
\]
\item (Numerical solution) Call $(x,y)=\IPM(\epsilon,\A,\b,\c,\K)$ where
$\IPM$ is a general-purpose solver (\defref{ipm}), and the problem
data $\A,\b,\c,\K$ implement the following
\[
Y^{\star}=\arg\max_{Y\in\S_{F}^{n}}\inner{-\tilde{C}}Y\text{ s.t. }\begin{array}{cc}
b_{i}-\inner{\tilde{A}_{i}}Y\ge0 & \quad\text{for all }i\in\{1,2,\dots,m\}\\
Y[J_{j},J_{j}]\succeq0 & \quad\text{for all }j\in\{1,2,\dots,n\}
\end{array}
\]
as an instance of $y^{\star}=\arg\max_{y}\{\b^{T}y:\c-\A^{T}y\in\K\}$
with $y^{\star}=\svec_{F}(Y^{\star})$.
\item (Back substitution) Recover $Y\in\S_{F}^{n}$ from $y=\svec_{F}(Y)$,
and compute $\delta=-\min_{j}\{0,\lambda_{\min}(Y[J_{j},J_{j}])\}$.
Solve the positive semidefinite matrix completion
\[
\text{find }\tilde{U}\in\R^{n\times\omega}\text{ such that }(\tilde{U}\tilde{U}^{T})[J_{j},J_{j}]=Y[J_{j},J_{j}]+\delta I\text{ for all }j
\]
using \citep[Algorithm~2]{sun2015decomposition}. Output $v=-(x_{i})_{i=1}^{m}$
and $U=\Pi^{T}\tilde{U}$.
\end{enumerate}
\end{algorithm}
 \algref{myalg} summarizes the standard implementation of chordal
conversion, which is known as the ``d-space conversion method using
basis representation'' in \citet{kim2011exploiting}. Our only modification
is to recover $X=UU^{T}$ from $Y=\proj_{F}(X)$ in Step 3 using the
\emph{low-rank} chordal completion~\citep[Theorem~1.5]{dancis1992positive},
instead of the \emph{maximum determinant} chordal completion~\citep[Theorem~2]{grone1984positive}
as originally proposed by \citet{fukuda2001exploiting}. We note that
both recovery procedures have the same complexity of $O(\omega^{3}n)$
time and $O(\omega^{2}n)$ memory, but the former puts $X$ in a more
convenient form~\citep{sun2015decomposition}.

The cost of \algref{myalg} is dominated by the cost of solving the
\emph{Schur complement equation} at each iteration of the interior-point
method in Step 2. At the heart of this paper is a simple but precise
upper-bound on the frontsize of its sparsity pattern $E^{(2)}$, given
in terms of the extended sparsity pattern $\o E$.  Concretely, our
result says that if $\omega(\o E)=O(1)$, then $\omega(E^{(2)})=O(1)$,
so the Schur complement matrix can be formed, factored, and backsubstituted
in $O(m+n)$ time. Hence, the per-iteration cost of the interior-point
method is also $O(m+n)$ time.

To state the Schur complement sparsity $E^{(2)}$ explicitly, note
that the problem data $(\A,\b,\c,\K)$ in Step 2 of \algref{myalg}
are \begin{subequations}\label{eq:AKdef}
\begin{equation}
\begin{array}{c}
\A=[\svec_{F}(A_{1}),\dots,\svec_{F}(A_{m}),-\P_{1},\dots,-\P_{n}],\\
\b=-\svec_{F}(C),\qquad\c=(b,0),\\
\K=\R_{+}^{m}\times\svec(\S_{+}^{\omega_{1}})\times\svec(\S_{+}^{\omega_{2}})\times\cdots\times\svec(\S_{+}^{\omega_{n}})
\end{array}\label{eq:AKdef-1}
\end{equation}
where $\omega_{j}\equiv|\col_{F}(j)|$ and each $\P_{j}$ is implicitly
defined to satisfy 
\begin{equation}
\P_{j}^{T}\svec_{F}(Y)=\svec(Y[\col_{F}(j),\col_{F}(j)])\quad\text{for all }Y\in\S_{F}^{n}.\label{eq:AKdef-2}
\end{equation}
\end{subequations}The resulting Schur complement matrix reads
\begin{gather}
\A\nabla^{2}f(w)\A^{T}=\sum_{i=1}^{m}d_{i}\svec_{F}(A_{i})\svec_{F}(A_{i})^{T}+\sum_{j=1}^{n}\P_{j}\D_{j}\P_{j}^{T}\label{eq:Hessdef}\\
\text{where }d_{i}=w_{i}^{-2},\qquad\D_{j}\svec(X_{j})=\svec(W_{j}^{-1}X_{j}W_{j}^{-1})\nonumber 
\end{gather}
and $w=(w_{1},\dots,w_{m},\svec(W_{1}),\dots,\svec(W_{n}))\in\interior(\K)$
is a scaling point. The associated sparsity pattern, aggregated over
all possible choices of scaling $w$, is as follows
\begin{align}
E^{(2)} & =\{(i,j):(\A\nabla^{2}f(w)\A^{T})[i,j]\ne0\text{ for some }w\in\interior(\K)\}\nonumber \\
 & =\left(\bigcup_{i=1}^{m}\spar(a_{i}a_{i}^{T})\right)\cup\left(\bigcup_{j=1}^{n}\spar(\P_{j}\D_{j}\P_{j}^{T})\right)\nonumber \\
 & =\left(\bigcup_{i=1}^{m}\clique(\supp(a_{i}))\right)\cup\left(\bigcup_{j=1}^{n}\clique(\supp(\P_{j}))\right)\label{eq:E2def}
\end{align}
where we have written $a_{i}=\svec_{F}(A_{i})$. The result below
says that if $\omega(\o E)=O(1)$, then $\omega(\o E^{(2)})=O(1)$. 

\begin{theorem}[Frontsize of Schur complement sparsity]
\label{thm:front}Given $C,A_{1},A_{2},\dots,A_{m}\in\S^{n}$, define
$\A,\K$ as in (\ref{eq:AKdef}), and define $E^{(2)}$ as in (\ref{eq:E2def}).
We have 
\[
\frac{1}{2}\omega(\omega+1)\le\omega(E^{(2)})\le\frac{1}{2}\o{\omega}(\o{\omega}+1)
\]
where $\omega\equiv\omega(E)$ and $\o{\omega}=\omega(\o E)$ and
$E,\o E$ are defined in (\ref{eq:Edef}) and (\ref{eq:Ebardef}).
Moreover, if $E=\o E$, then we also have $\chol(E^{(2)})=E^{(2)}$. 
\end{theorem}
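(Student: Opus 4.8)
The plan is to sandwich $\omega(E^{(2)})$ between the frontsizes of two explicit sparsity patterns on the vectorized index set: a single clique for the lower bound, and a ``lifted'' chordal pattern built from a tree decomposition of $\o G$ for the upper bound.

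The lower bound is easy. Since $J_{j}=\col_{F}(j)$ is a clique of $F=\chol(E)$, which is immediate from \defref{symbchol}, the entries of $\svec(Y[J_{j},J_{j}])$ are precisely $\tfrac12\omega_{j}(\omega_{j}+1)$ distinct coordinates $\idx_{F}(k,l)$ of $\svec_{F}(Y)$, so $\clique(\supp(\P_{j}))$ is a $\tfrac12\omega_{j}(\omega_{j}+1)$-clique contained in $E^{(2)}$. Choosing $j$ to maximize $\omega_{j}$, and using that a $\kappa$-clique has frontsize exactly $\kappa$ while $\omega(\cdot)$ is monotone under adding edges (both again from \defref{symbchol}), gives $\omega(E^{(2)})\ge\tfrac12\omega(\omega+1)$.

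For the upper bound, the idea is that $E^{(2)}$ is a union of cliques, each of which fits inside the ``vectorization'' of a single bag of $\o G$. Write $\o F=\chol(\o E)$, $\o J_{j}=\col_{\o F}(j)$, $\o{\omega}_{j}=|\o J_{j}|$, let $\mathcal{T}$ be the elimination tree, and recall that $(\{\o J_{j}\},\mathcal{T})$ is a tree decomposition of $(V,\o F)$, hence of $\o G$. Introduce the lifted pattern $\widehat{E}=\bigcup_{j=1}^{n}\clique(\widehat{J}_{j})$ on the vectorized index set, where $\widehat{J}_{j}=\{\idx_{\o F}(k,l):k,l\in\o J_{j},\ k\ge l\}$ has $\tfrac12\o{\omega}_{j}(\o{\omega}_{j}+1)$ elements. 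Since $E\subseteq\o E$ forces $F\subseteq\o F$, the map $\idx_{F}(k,l)\mapsto\idx_{\o F}(k,l)$ is a well-defined, order-preserving injection, under which $E^{(2)}\subseteq\widehat{E}$: each $\clique(\supp(\P_{j}))$ embeds into $\widehat{J}_{j}$ because $\col_{F}(j)\subseteq\col_{\o F}(j)$, while each $\clique(\supp(\svec_{F}(A_{i})))$ embeds into $\widehat{J}_{j_{i}}$ for any bag $\o J_{j_{i}}$ containing the $\o G$-clique $\supp(A_{i})$ --- such a bag exists since every clique of a graph lies in some bag of every one of its tree decompositions. A routine check of running intersection shows $(\{\widehat{J}_{j}\},\mathcal{T})$ is itself a tree decomposition of $\widehat{E}$.

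The main obstacle --- and the only point where the \emph{raster} nature of $\idx_{F}$ and $\idx_{\o F}$ is used --- is to show that the raster ordering is a perfect elimination ordering for $\widehat{E}$, i.e.\ $\chol(\widehat{E})=\widehat{E}$. I would prove that for every vectorized index $p=\idx_{\o F}(a,b)$ (with $a\ge b$), its higher-indexed $\widehat{E}$-neighbours all lie in the single bag $\widehat{J}_{b}$; since $\widehat{J}_{b}$ is a clique of $\widehat{E}$ and contains $p$ (because $(a,b)\in\o F$ and $a\ge b$ force $a\in\o J_{b}$), this says the higher neighbourhood of $p$ is a clique, so \defref{symbchol} creates no fill when $p$ is eliminated. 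The computation is short: if $q=\idx_{\o F}(c,d)$ shares a bag $\widehat{J}_{j}$ with $p$ and $q>p$, then $\{a,b,c,d\}\subseteq\o J_{j}$, and the raster order --- which sorts a pair by its column index first --- forces $d\ge b$ and $c>b$; as $\o J_{j}$ is a clique of $\o F$, this gives $c,d\in\o J_{b}$, i.e.\ $q\in\widehat{J}_{b}$. It follows that $\omega(\widehat{E})=\max_{j}\tfrac12\o{\omega}_{j}(\o{\omega}_{j}+1)\le\tfrac12\o{\omega}(\o{\omega}+1)$, and since $\chol(E^{(2)})$ maps into $\chol(\widehat{E})=\widehat{E}$ under the index embedding (dropping the extra vertices only shrinks columns), we conclude $\omega(E^{(2)})\le\tfrac12\o{\omega}(\o{\omega}+1)$. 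Finally, if $E=\o E$ then $F=\o F$ and $J_{j}=\o J_{j}$, so the second family of cliques defining $E^{(2)}$ is exactly $\{\clique(\widehat{J}_{j})\}_{j}$; with the embedding this forces $E^{(2)}=\widehat{E}$, and therefore $\chol(E^{(2)})=\chol(\widehat{E})=E^{(2)}$.
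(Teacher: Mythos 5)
Your proof is correct, and at its core it follows the same route as the paper: sandwich $E^{(2)}$ between two lifted patterns --- one built from $F=\chol(E)$, and one (your $\widehat{E}$, the paper's $\o F^{(2)}$) built from $\o F=\chol(\o E)$ --- show that $\widehat{E}$ is zero-fill with frontsize $\tfrac12\o\omega(\o\omega+1)$, and finish via subgraph monotonicity (\propref{del}). The key insight, that the raster ordering $\idx_{\o F}$ turns the lifted column sets into a perfect elimination structure, is identical. Where you diverge is in organization. The paper routes its zero-fill argument through an abstraction, the \emph{sorted running intersection property} (\defref{rip}): the column sets $\o J_j=\col_{\o F}(j)$ satisfy it (\propref{zf->rip}), the lift $J\mapsto J^{(2)}$ preserves it (\lemref{lift}, where the raster computation lives), and the property implies zero-fill with computable frontsize (\propref{rip->zf} via \lemref{colrip}). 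You instead prove $\chol(\widehat{E})=\widehat{E}$ head-on, by showing that every vectorized index $p=\idx_{\o F}(a,b)$ has its higher $\widehat{E}$-neighbourhood contained in the single clique $\widehat{J}_b$; your local raster computation (a shared bag and $q>p$ force $c>b$, $d\ge b$, hence $c,d\in\o J_b$) is essentially \lemref{lift} and \lemref{colrip} collapsed into one step. This is a bit leaner for the single application, at the cost of not factoring out the reusable RIP machinery. You also get the lower bound more cheaply, by exhibiting a single $\tfrac12\omega(\omega+1)$-clique in $E^{(2)}$ rather than computing $\omega(F^{(2)})$ exactly as \lemref{quadlift} does. (Minor note: your claim that $(\{\widehat J_j\},\mathcal{T})$ is a tree decomposition of $\widehat E$ is true but never used; your direct per-vertex argument supersedes it.)
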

In cases where $E=\o E$, as in the MAX-$k$-CUT relaxation (\exaref{maxcut})
and the Lovász theta problem (\exaref{theta}), \thmref{front} predicts
that the Schur complement matrix $\Hess=\A\nabla^{2}f(w)\A^{T}$ can
be factored $\L=\chol(\Hess)$ with \emph{zero fill-in}, meaning that
$\spar(\L+\L^{T})=\spar(\Hess)$. More generally, if $\omega<\o{\omega}$
holds with a small gap, then we would also expect $\Hess$ to factor
with very little fill-in.

As previously pointed out by \citet{kobayashi2008correlative}, if
the Schur complement sparsity $E^{(2)}$ is known to have frontsize
$\omega(E^{(2)})=O(1)$, then the Schur complement matrix $\Hess=\A\nabla^{2}f(w)\A$
can be formed, factored, and backsubstituted in $O(m+n)$ time. Hence,
the per-iteration cost of the interior-point method is also $O(m+n)$
time. 
\begin{corollary}[Cost of Schur complement equation]
\label{cor:normal}Given the data matrix $\A$, scaling point $w\in\interior(\K)$,
and right-hand side $g$, define $\Hess=\A\nabla^{2}f(w)\A^{T}$ as
in (\ref{eq:E2def}). Suppose that all columns of $\A$ are nonzero,
and all scaling matrices $\D_{j}$ are fully dense. Then, it takes
$\Tim$ arithmetic operations and $\Mem$ units of storage to form
$\Hess$ and solve $\Hess\Delta y=g$, where
\begin{gather*}
\frac{1}{48}(\omega-1)^{6}+m+n\le\Tim\le4\o{\omega}^{4}\cdot(m+\omega n),\\
\frac{1}{8}(\omega-1)^{4}+m+n\le\Mem\le2\o{\omega}^{2}\cdot(m+\omega n),
\end{gather*}
in which $\omega\equiv\omega(E)$ and $\o{\omega}\equiv\omega(\o E)$
satisfy $1\le\omega\le\o{\omega}$.
\end{corollary}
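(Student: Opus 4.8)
The plan is to combine the frontsize bound of \thmref{front} with the cost accounting of \propref{chol}, but to do so we must first account separately for the cost of \emph{forming} $\Hess$ from its rank-one-plus-block representation \eqref{eq:Hessdef} and the cost of \emph{factoring and solving} with $\Hess$ once assembled. For the forming step, each term $d_i \svec_F(A_i)\svec_F(A_i)^T$ contributes a dense update over the index set $\supp(a_i)$, which by \thmref{front} has cardinality at most $\o\omega(\o\omega+1)/2$; since there are $m$ such terms, this costs $O(\o\omega^4 m)$ time and $O(\o\omega^2 m)$ storage once we account for the fact that each update is a symmetric matrix of order at most $\o\omega(\o\omega+1)/2$. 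Similarly each term $\P_j \D_j \P_j^T$ has support $\supp(\P_j)$, which has size at most $\o\omega(\o\omega+1)/2$ as well (this is precisely the content of the upper bound in \thmref{front} applied columnwise), and $\D_j$ is dense of order $\omega_j(\omega_j+1)/2 \le \omega(\omega+1)/2$, so each of the $n$ terms costs $O(\o\omega^4 \omega)$ time and $O(\o\omega^2\omega)$ storage. Summing gives $O(\o\omega^4(m+\omega n))$ time and $O(\o\omega^2(m+\omega n))$ storage for the assembly, which is consistent with the claimed upper bounds up to the leading constant.

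For the factor-and-solve step, \thmref{front} gives $\omega(E^{(2)}) \le \o\omega(\o\omega+1)/2 \le \o\omega^2$, so \propref{chol} applied to the sparsity pattern $E^{(2)}$ (of order $q = \sum_j \omega_j(\omega_j+1)/2 \le \omega(\omega+1)/2 \cdot n$, plus $m$ rows for the linear-inequality block) yields a Cholesky factorization in at most $\omega(E^{(2)})^2 \cdot (\text{order}) \le \o\omega^4 \cdot (m + \omega(\omega+1)n/2)$ arithmetic operations and $\omega(E^{(2)}) \cdot (\text{order}) \le \o\omega^2(m+\omega(\omega+1)n/2)$ memory. Absorbing the $\tfrac12$ and combining with the assembly cost (which is of the same order) gives the stated upper bounds $\Tim \le 4\o\omega^4(m+\omega n)$ and $\Mem \le 2\o\omega^2(m+\omega n)$, where I would track constants carefully to confirm that $4$ and $2$ suffice. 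The lower bounds come directly from the lower half of \propref{chol} together with the lower bound $\omega(E^{(2)}) \ge \omega(\omega+1)/2$ from \thmref{front}: writing $k = \omega(\omega+1)/2$, \propref{chol} gives $\Tim \ge \tfrac16(k-1)^3 + (m+n) \ge \tfrac16 \cdot \tfrac18(\omega-1)^6 + (m+n) = \tfrac{1}{48}(\omega-1)^6 + m+n$ once we use $k-1 = \tfrac12(\omega-1)(\omega+2) \ge \tfrac12(\omega-1)^2$, and similarly $\Mem \ge \tfrac12(k-1)^2 + (m+n) \ge \tfrac18(\omega-1)^4 + m+n$.

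The only genuinely delicate point — and the step I expect to be the main obstacle — is verifying that the frontsize bound $\omega(E^{(2)}) \le \tfrac12\o\omega(\o\omega+1)$ from \thmref{front} is actually the right quantity to feed into \propref{chol}, rather than some larger effective bandwidth arising from interleaving the $m$ linear-constraint rows with the $n$ semidefinite blocks in the ordering of $\A$'s columns. Here one must use the fact, asserted in the last sentence of \thmref{front} for the case $E=\o E$ and implicit in the general bound, that the raster ordering enforced on $\idx_F(\cdot,\cdot)$ makes $E^{(2)}$ inherit a block structure whose column counts in the symbolic Cholesky factor $\chol(E^{(2)})$ are controlled by $\omega(\o E)$; this is exactly why the paper emphasized earlier that $\idx_F$ implements a raster ordering. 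Granting that, the corollary is a mechanical consequence of \propref{chol} and \thmref{front}, and the remaining work is purely arithmetic bookkeeping of the leading constants.
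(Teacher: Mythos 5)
Your high-level plan — split the cost into an assembly phase and a factor-and-solve phase, then feed the frontsize bound of \thmref{front} into \propref{chol} — is indeed the paper's strategy. But there is a genuine error in the accounting that breaks the upper bound: you have confused the \emph{rows} of $\A$ with its \emph{columns}, and this changes the order of the Schur complement matrix by a factor of $\omega$.

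From (\ref{eq:AKdef}), $\A=[\svec_{F}(A_{1}),\dots,\svec_{F}(A_{m}),-\P_{1},\dots,-\P_{n}]$, where each $\svec_{F}(A_{i})\in\R^{|F|}$ and each $\P_{j}\in\R^{|F|\times\omega_{j}(\omega_{j}+1)/2}$. So $\A$ has $|F|$ rows (the dimension of the dual/multiplier $y$) and $q=m+\sum_{j}\tfrac12\omega_{j}(\omega_{j}+1)$ columns (the dimension of the primal $x\in\K$). The Schur complement $\Hess=\A\nabla^{2}f(w)\A^{T}$ is therefore of order $|F|$, and $E^{(2)}$ is a sparsity pattern of order $|F|$, where $n\le|F|\le\omega n$. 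You instead take the order to be $m+\sum_{j}\tfrac12\omega_{j}(\omega_{j}+1)\le m+\tfrac12\omega(\omega+1)n$, i.e.\ the column count of $\A$. Plugging this wrong order into \propref{chol} yields a factorization cost on the scale of $\o{\omega}^{4}\cdot(m+\omega^{2}n)$ rather than $\o{\omega}^{4}\cdot(m+\omega n)$ — an extra factor of $\omega$ in the $n$-dependent term that is not a constant and cannot be absorbed into the leading ``$4$''. The ``absorbing the $\tfrac12$ and combining'' step silently sweeps this under the rug, so as written your argument does not actually establish $\Tim\le4\o{\omega}^{4}(m+\omega n)$.

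Two smaller issues in the same vein. First, for the lower bound you assert that \propref{chol} directly gives $\Tim\ge\tfrac16(k-1)^{3}+(m+n)$; but \propref{chol} only contributes the order of $\Hess$, which is $|F|\ge n$, so the additive $m$ must be accounted for separately by the assembly cost (forming $\sum_{i}d_{i}a_{i}a_{i}^{T}$ costs at least $m$ under the hypothesis that no column of $\A$ is zero), as the paper does. Second, the concern in your last paragraph about ``interleaving the $m$ linear-constraint rows with the $n$ semidefinite blocks in the ordering of $\A$'s columns'' is a symptom of the same row/column confusion: the rows and columns of $\Hess$ are indexed by entries of $F$ (via $\idx_{F}$), not by a concatenation of linear constraints and SDP blocks, so no such interleaving issue arises. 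Once you identify the order of $\Hess$ correctly as $|F|\le\omega n$, the corollary does follow mechanically from \thmref{front} and \propref{chol}, exactly as the paper carries it out.
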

\begin{proof}
Let $a_{i}\equiv\svec_{F}(A_{i})$ and $\omega_{j}\equiv|\col_{F}(j)|$.
We break the solution of $\Hess\Delta y=g$ into five steps:
\begin{enumerate}
\item (Input) It takes $\Mem_{\mathrm{data}}$ memory to state the problem
data, where $m+n\le\Mem_{\mathrm{data}}\le2\o{\omega}^{2}m+3\omega^{2}n$.
Indeed, $m+n\le\nnz(\A)\le\o{\omega}^{2}m+\omega^{2}n$ because $1\le\nnz(a_{i})\le\o{\omega}^{2}$
(see Step 2 below) and $\nnz(\P_{j})=\frac{1}{2}\omega_{j}(\omega_{j}+1)$.
Also, $m+n\le\nnz(w)\le m+\omega^{2}n$, and $n\le\nnz(g)\le\omega n$.
\item (Build LP part) It takes $\Tim_{\mathrm{LP}}$ time to build $\sum_{i=1}^{m}d_{i}a_{i}a_{i}^{T}$,
where $m\le\Tim_{\mathrm{LP}}\le2\o{\omega}^{4}m$ time and memory.
This follows from $1\le\nnz(a_{i})\le\o{\omega}^{2}$, where the upper-bound
is because $\spar(a_{i}a_{i}^{T})\subseteq E^{(2)}\subseteq\chol(E^{(2)})$,
and that $\omega(E^{(2)})$ is, by definition, the maximum number
of nonzero elements in a single column of $\chol(E^{(2)})$.
\item (Build SDP part) It takes $\Tim_{\mathrm{SDP}}$ time to build $\sum_{j=1}^{n}\P_{j}\D_{j}\P_{j}^{T}$,
where $n\le\Tim_{\mathrm{SDP}}\le\omega^{4}n$ time and memory. This
follows from $\nnz(\P_{j})=\frac{1}{2}\omega_{j}(\omega_{j}+1)$,
which implies $\nnz(\P_{j}\D_{j}\P_{j}^{T})=\frac{1}{4}\omega_{j}^{2}(\omega_{j}+1)^{2}$
for a fully-dense $\D_{j}\succ0$. 
\item (Factorization) It takes $\Tim_{\mathrm{fact}}$ time and $\Mem_{\mathrm{fact}}$
memory to factor $\L=\chol(\Hess)$, where $\frac{1}{48}(\omega-1)^{6}+n\le\Tim_{\mathrm{fact}}\le\o{\omega}^{5}n$
and $\frac{1}{8}(\omega-1)^{4}+n\le\Mem_{\mathrm{fact}}\le\o{\omega}^{3}n$.
The matrix $\Hess$ has $|F|$ columns and rows, and frontsize $\omega(E^{(2)})$.
The desired figures follow by substituting $\frac{1}{2}\omega^{2}\le\omega(E^{(2)})\le\o{\omega}^{2}$
and $n\le|F|\le\omega n$ into \propref{chol}.
\item (Back-substitution) It takes $\Mem_{\mathrm{fact}}$ time and memory
to solve each of $\L z=g$ and $\L^{T}\Delta y=z$ via triangular
back-substitution. 
\end{enumerate}
The overall runtime is cumulative, so $\Tim=\Tim_{\mathrm{LP}}+\Tim_{\mathrm{SDP}}+\Tim_{\mathrm{fact}}+2\Mem_{\mathrm{fact}}$.
The overall memory use is $\Mem=\Mem_{\mathrm{data}}+\Mem_{\mathrm{fact}}$,
because the matrix $\Hess$ can be constructed and then factored in-place.
\qed
\end{proof}

Let us now give an end-to-end complexity guarantee for \algref{myalg}.
We will need the following assumption to ensure that the data $(\A,\b,\c,\K)$
previously defined in (\ref{eq:AKdef}) specifies an SDP in $(N,\omega)$-standard
form, where $N=m+\omega n$ and $\omega\equiv\omega(E)$. 

\begin{assume}[Strong duality is attained]\label{asm:strict}There
exists a primal-dual pair $X^{\star}\succeq0$ and $v^{\star}\le0$
that are feasible $\inner{A_{i}}{X^{\star}}\le b_{i}$ for all $i$
and $\sum_{i}v_{i}^{\star}A_{i}\preceq C$ and coincide in their objectives
$\inner C{X^{\star}}=\inner b{v^{\star}}$.

\end{assume}
\begin{theorem}[Upper complexity]
\label{thm:total}Let the data $C,A_{1},\dots,A_{m}\in\S^{n}$ and
$b\in\R^{m}$ satisfy \asmref{strict}. Given a tree decomposition
of width $\o{\tau}$ for the extended aggregate sparsity graph $\o G=(V,\o E)$,
where
\[
V=\{1,2,\dots,n\},\qquad\o E=\spar(C)\cup\clique(A_{1})\cup\cdots\cup\clique(A_{m}),
\]
set $\Pi$ as the associated perfect elimination ordering. Then, \algref{myalg}
outputs $U\in\R^{n\times\o{\omega}}$ and $v\in\R^{m}$ with $v\le0$
such that
\begin{align*}
\inner{A_{i}}{UU^{T}}-b_{i} & \le\epsilon\text{ for all }i, & \sum_{i=1}^{m}v_{i}A_{i}-C & \preceq\epsilon\cdot I, & \inner C{UU^{T}}-\inner bv & \le\epsilon\cdot N,
\end{align*}
in $O(\sqrt{N}\log(1/\epsilon))$ iterations, with per-iteration costs
of $O(\o{\omega}^{4}\cdot N)$ time and $O(\o{\omega}^{2}\cdot N)$
memory, where $N=m+\o{\omega}\cdot n$ and $\o{\omega}=1+\o{\tau}$.
\end{theorem}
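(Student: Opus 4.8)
The plan is to recognize that \algref{myalg} reduces solving (\ref{eq:sdp}) to a single call of a general-purpose solver $\IPM$ on the conic program with data $(\A,\b,\c,\K)$ from (\ref{eq:AKdef}), followed by the matrix completion in Step~3. So it suffices to establish three things: (i) that $(\A,\b,\c,\K)$, after the reordering induced by $\Pi$, is an SDP in $(N',\omega)$-standard form (\defref{clp}) with $N'\le N$ and $\omega\le\o{\omega}$; (ii) that each iteration of $\IPM$ costs $O(\o{\omega}^{4}N)$ time and $O(\o{\omega}^{2}N)$ memory; and (iii) that the approximate-optimality certificate returned by $\IPM$ translates, through Step~3, into the stated guarantees on $(U,v)$. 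Then \defref{ipm} immediately gives the $O(\sqrt{N'}\log(1/\epsilon))\subseteq O(\sqrt{N}\log(1/\epsilon))$ iteration count, and (ii) gives the per-iteration cost.

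For (i), write $E_{\Pi},\o{E}_{\Pi}$ for the $\Pi$-reordered patterns, $F=\chol(E_{\Pi})$, $\omega\equiv\omega(E_{\Pi})$. Since $E_{\Pi}\subseteq\o{E}_{\Pi}$ and symbolic Cholesky is monotone, $\omega\le\omega(\o{E}_{\Pi})$; as $\Pi$ is the perfect elimination ordering of a width-$\o{\tau}$ tree decomposition of $\o{G}$, \corref{treewidth} gives $\omega(\o{E}_{\Pi})=1+\o{\tau}=\o{\omega}$, hence $\omega\le\o{\omega}$, and $N'=m+|F|=m+\sum_{j}\omega_{j}\le m+\omega n\le m+\o{\omega}n=N$. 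Condition~1 of \defref{clp} holds by viewing $\R_{+}^{m}$ as $m$ copies of $\svec(\S_{+}^{1})$. Condition~2 holds automatically: $\A^{T}\svec_{F}(Y)=0$ forces $Y[J_{j},J_{j}]=0$ for all $j$, and because the sets $\{J_{j}\}$ are cliques of the chordal pattern $F$ that cover every edge of $F$, this forces $Y=0$. Condition~3 is where \asmref{strict} enters: from a primal--dual optimal pair $(X^{\star},v^{\star})$ of (\ref{eq:sdp}) with zero gap, the chordal decomposition theorems cited in the introduction produce $Y^{\star}=\proj_{F}(\Pi X^{\star}\Pi^{T})$ together with a decomposition $\Pi(C-\sum_{i}v_{i}^{\star}A_{i})\Pi^{T}=\sum_{j}\P_{j}X_{j}^{\star}\P_{j}^{T}$ with each $X_{j}^{\star}\succeq0$, and a short computation checks these assemble into the primal--dual optimal pair required by \defref{clp} (with standard-form dimension $N'$).

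For (ii), \defref{ipm} says each iteration costs $O(\omega^{2}N')=O(\o{\omega}^{2}N)$ overhead plus $O(1)$ solves of the Schur complement equation, whose matrix $\Hess=\A\nabla^{2}f(w)\A^{T}$ has sparsity $E^{(2)}$ from (\ref{eq:E2def}). By \thmref{front}, $\omega(E^{(2)})\le\tfrac{1}{2}\o{\omega}(\o{\omega}+1)$, and since the solver's fill-reducing permutation is required to be no worse than the natural ordering, the factorization it performs is no more expensive than the one analyzed in \corref{normal}. We may assume without loss of generality that every $A_{i}\ne0$ (a zero constraint is vacuous or infeasible), so all columns of $\A$ are nonzero, and we bound pessimistically by taking each scaling block $\D_{j}$ dense; \corref{normal} then gives at most $4\o{\omega}^{4}(m+\omega n)\le4\o{\omega}^{4}N$ arithmetic operations and $2\o{\omega}^{2}(m+\omega n)\le2\o{\omega}^{2}N$ memory per solve, and adding the overhead gives the claimed $O(\o{\omega}^{4}N)$ time and $O(\o{\omega}^{2}N)$ memory per iteration.

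For (iii), which I expect to be the main obstacle: let $x=(x_{\mathrm{LP}},\svec(X_{1}),\dots,\svec(X_{n}))$, $y=\svec_{F}(Y)$ be the returned iterates, so $\|\A x-\b\|\le\epsilon$, $\c-\A^{T}y+\epsilon\one_{\K}\in\K$, $\c^{T}x-\b^{T}y\le\epsilon N'$. The $\R_{+}^{m}$ block of the dual slack gives $\inner{\tilde{A}_{i}}{Y}\le b_{i}+\epsilon$, the semidefinite blocks give $Y[J_{j},J_{j}]+\epsilon I\succeq0$, hence the shift $\delta$ in Step~3 satisfies $0\le\delta\le\epsilon$, the completion targets $Y[J_{j},J_{j}]+\delta I\succeq0$ form consistent chordal data, and a low-rank completion $\tilde{U}\in\R^{n\times\omega}$ exists by \citep{dancis1992positive,sun2015decomposition}; pad to $\o{\omega}$ columns and set $U=\Pi^{T}\tilde{U}$, $v=-x_{\mathrm{LP}}\le0$. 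The bound $\sum_{i}v_{i}A_{i}-C\preceq\epsilon I$ then follows cleanly: $\|\A x-\b\|\le\epsilon$ rearranges to $\proj_{F}(\Pi(C-\sum_{i}v_{i}A_{i})\Pi^{T})=\sum_{j}\P_{j}X_{j}\P_{j}^{T}+R$ with $X_{j}\succeq0$ and $\|R\|_{F}\le\epsilon$, and $\sum_{j}\P_{j}X_{j}\P_{j}^{T}$ is a sum of positive semidefinite matrices, hence itself positive semidefinite, leaving $\Pi(C-\sum_{i}v_{i}A_{i})\Pi^{T}\succeq-\epsilon I$. For the remaining two bounds one uses that $\tilde{U}\tilde{U}^{T}$ agrees with $Y+\delta I$ on the pattern $F\supseteq\spar(\tilde{A}_{i})\cup\spar(\tilde{C})$, so $\inner{A_{i}}{UU^{T}}=\inner{\tilde{A}_{i}}{Y}+\delta\tr(\tilde{A}_{i})$ and $\inner{C}{UU^{T}}=\inner{\tilde{C}}{Y}+\delta\tr(\tilde{C})$; combined with $\inner{\tilde{A}_{i}}{Y}\le b_{i}+\epsilon$, $\c^{T}x=-\inner{b}{v}$, $\b^{T}y=-\inner{\tilde{C}}{Y}$, and $N'\le N$, this gives the stated inequalities up to the $O(\epsilon)$ perturbations $\delta\tr(\tilde{A}_{i})$ and $\delta\tr(\tilde{C})$, which are absorbed into $\epsilon$ after rescaling it by a fixed multiple of $1+\max_{i}|\tr A_{i}|+|\tr C|$ (changing the iteration count only by an additive $O(\log(\cdot))$). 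The careful accounting of this completion-induced shift $\delta I$ is the one genuinely delicate part; everything else is assembly of \thmref{front}, \corref{normal}, \corref{treewidth}, and \defref{ipm}.
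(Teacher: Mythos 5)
Your proposal follows the paper's proof step for step: verify $(\A,\b,\c,\K)$ is in standard form (with the linear-independence and strong-duality checks deferred to chordal decomposition, exactly as in the paper's Appendix~\ref{app:regularity}), bound the per-iteration cost by combining \thmref{front} with \corref{normal} under \defref{ipm}, and translate the solver's $\epsilon$-approximate certificate through the completion-induced shift $\delta I$ by rescaling $\epsilon$ by a constant involving the traces of $C$ and the $A_i$'s --- which is precisely the paper's $K=2\max\{1,|\tr C|,|\tr A_1|,\dots,|\tr A_m|\}$ device. The only differences are presentational (e.g., you rescale $\epsilon$ after the fact rather than calling $\IPM$ with tolerance $\epsilon/K$ upfront, and you track the exact standard-form dimension $N'=m+|F|\le N$ slightly more explicitly), so this is the same proof.
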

\begin{proof}One can verify that $(\A,\b,\c,\K)$ defined in (\ref{eq:AKdef})
specifies an SDP in $(N,\omega)$-standard form (see \appref{regularity}
for the regularity conditions). Moreover, it follows from the monotonicity
of the frontsize (\propref{del}) that $\omega\equiv\omega(E_{\Pi})\le\omega(\o E_{\Pi})\equiv\o{\omega}$.
We will track the cost of \algref{myalg} step-by-step:
\begin{enumerate}
\item (Front-reducing permutation) Preordering $A_{i}\gets\Pi A_{i}\Pi^{T}$
and $C\gets\Pi C\Pi^{T}$ the matrices cost $O(\nnz(C)+\sum_{i=1}^{m}\nnz(A_{i}))=O(\o{\omega}^{2}N)$
time and memory. This follows from $\nnz(C)\le|F|\le N$ and $\nnz(A_{i})\le\o{\omega}^{2}$. 
\item (Conversion) Computing $F=\chol(E)$ costs $O(|F|)=O(N)$ time and
space, where we note that $|F|\le\omega n$. 
\item (Solution) Let $K=2\cdot\max\{1,|\tr(C)|,|\tr(A_{1})|,\dots,|\tr(A_{m})|\}$.
After $O(\sqrt{N}\log(K/\epsilon))$ iterations, we arrive at a primal
$v\le0$ and $V_{j}\succeq0$ and dual point $Y\in\S_{F}^{n}$ satisfying
\begin{gather*}
\inner{A_{i}}Y-b_{i}\le\epsilon/K,\quad Y[J_{j},J_{j}]\succeq-(\epsilon/K)I,\\
{\textstyle \left\Vert \sum_{i}v_{i}A_{i}+\sum_{j}P_{j}V_{j}P_{j}^{T}-C\right\Vert \le\epsilon/K,}\\
\inner CY-\inner bv\le N\cdot(\epsilon/K).
\end{gather*}
Each iteration costs $O(\o{\omega}^{4}N)$ time and $O(\o{\omega}^{2}N)$
memory. This cost is fully determined by the cost of solving $O(1)$
instances of the Schur complement equation, which dominates the overhead
of $O(\omega^{3}n+\nnz(\A))=O(\o{\omega}^{2}N)$ time and memory.
\item (Recovery) Using the previously recovered $Y$, we recover $U$ such
that $\Pi_{F}(UU^{T})=Y+\delta I$ where $\delta=-\min_{j}\{0,\lambda_{\min}(Y[J_{j},J_{j}])\}\le\epsilon/K$.
This takes $O(\omega^{3}n)=O(\o{\omega}^{2}N)$ time and $O(\omega^{2}n)=O(\o{\omega}N)$
memory.
\item (Output) We output $U$ and $v$, and check for accuracy. It follows
from $K\ge2|\tr(C)|$ that
\[
\inner C{UU^{T}}-\inner CY=\delta\cdot\tr(C)\le\epsilon\cdot\frac{\tr(C)}{K}\le\epsilon\cdot\frac{|\tr(C)|}{2|\tr(C)|}\le\frac{1}{2}\epsilon,
\]
and from $K\ge2$ and $N\ge1$ that 
\[
\inner C{UU^{T}}-\inner bv\le\frac{N}{K}\epsilon+\frac{1}{2}\epsilon=N\epsilon\cdot\left(\frac{1}{2}+\frac{1}{2N}\right)\le N\cdot\epsilon.
\]
Similarly, it follows from $K\ge2|\tr(A_{i})|$ and $K\ge2$ that
$\inner{A_{i}}{UU^{T}}-b_{i}\le\epsilon$. Finally, it follows from
$v\ge0$ and $V_{j}\succeq0$ that
\[
\sum_{i}v_{i}A_{i}+\sum_{j}P_{j}V_{j}P_{j}^{T}-C\preceq\epsilon\cdot I\implies\sum_{i}v_{i}A_{i}-C\preceq\epsilon\cdot I.
\]
\end{enumerate}
\qed \end{proof}

Let $\o{\tau}_{\star}\equiv\tw(\o G)$. In theory, it takes $O(\o{\tau}_{\star}^{8}\cdot n\log n)$
time to compute a tree decomposition of width $\o{\tau}=O(\o{\tau}_{\star}^{2})$
by exhaustively enumerating the algorithm of \citet{fomin2018fully}.
Using this tree decomposition, \thmref{total} says that \algref{myalg}
arrives at an $\epsilon$-accurate solution in $O((m+n)^{1/2}\cdot\o{\tau}_{\star}\cdot\log(1/\epsilon))$
iterations, with per-iteration costs of $O((m+n)\cdot\o{\tau}_{\star}^{10})$
time and $O((m+n)\cdot\o{\tau}_{\star}^{6})$ memory. Combined, the
end-to-end complexity of solving (\ref{eq:sdp}) using \algref{myalg}
is $O(\o{\tau}_{\star}^{11}\cdot(m+n)^{1.5}\cdot\log(1/\epsilon))$
time. 

In practice, chordal conversion works even better. In \secref{exp},
we provide detailed numerical experiments to validate that: (i) the
minimum degree heuristic usually finds $\Pi$ that yield $\o{\omega}=O(1)$;
(ii) a primal-dual interior-point method usually converges to $\epsilon$
accuracy in dimension-free $O(\log(1/\epsilon))$ iterations. Taking
these as formal assumptions improves \thmref{total} to $O((m+n)\log(1/\epsilon))$
empirical time. 

The following establishes the sharpness of \thmref{total}.
\begin{corollary}[Lower complexity]
\label{cor:lb}Given the data $C,A_{1},\dots,A_{m}\in\S^{n}$ and
$b\in\R^{m}$, let $\tau_{\star}$ denote the treewidth of the aggregate
sparsity graph $G=(V,E)$:
\[
V=\{1,2,\dots,n\},\qquad E=\spar(C)\cup\spar(A_{1})\cup\cdots\cup\spar(A_{m}).
\]
There exists no choice of $\Pi$ that will allow \algref{myalg} to
solve (\ref{eq:sdp}) to arbitrary accuracy $\epsilon>0$ in less
than $\Omega(\tau_{\star}^{6}+m+n)$ time and $\Omega(\tau_{\star}^{4}+m+n)$
memory.
\end{corollary}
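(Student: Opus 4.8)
I would split the bound into its two parts. The $\Omega(m+n)$ term is free and I would dispatch it first: \algref{myalg} must read the $m+1$ data matrices together with the vector $b$ in Step~1 and must write the outputs $U\in\R^{n\times\omega}$ and $v\in\R^{m}$ in Step~3, so it spends $\Omega(m+n)$ time and memory no matter what $\Pi$ is. The remaining $\Omega(\tau_{\star}^{6})$ time and $\Omega(\tau_{\star}^{4})$ memory I would charge to a single Schur complement solve inside the interior-point method of Step~2: since a general-purpose solver (\defref{ipm}) must reach an arbitrarily small accuracy $\epsilon$, it takes at least one iteration, and I only need to lower-bound the cost of forming and factoring $\Hess=\A\nabla^{2}f(w)\A^{T}$ at one scaling point $w$.

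The core of the argument is to show that, for \emph{every} choice of $\Pi$ in Step~1, the matrix $\Hess$ carries (at a suitable iterate) a dense principal submatrix of order $k\equiv\tfrac{1}{2}\omega(\omega+1)$ with $\omega\equiv\omega(E_{\Pi})$. First, \corref{treewidth} gives $\omega\ge1+\tw(G)=1+\tau_{\star}$ --- no reordering can beat the treewidth. Next, I would fix an index $j^{\star}$ attaining $\omega_{j^{\star}}=\omega$; since the front $\col_{F}(j^{\star})\times\col_{F}(j^{\star})$ is a clique of $F=\chol(E_{\Pi})$, the selection matrix $\P_{j^{\star}}$ of (\ref{eq:AKdef-2}) has its $k$ nonzeros lying in $k$ distinct rows, so $\supp(\P_{j^{\star}})$ has exactly $k$ elements and (\ref{eq:E2def}) places the $k$-clique $\clique(\supp(\P_{j^{\star}}))$ inside $E^{(2)}$ --- this is also the source of the lower bound in \thmref{front}. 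It then remains to argue that at a generic iterate the summand $\P_{j^{\star}}\D_{j^{\star}}\P_{j^{\star}}^{T}$ actually fills this block. For this I would note that, viewed as a matrix on $\svec(\S^{\omega_{j^{\star}}})$, the operator $\D_{j^{\star}}\colon\svec(X)\mapsto\svec(W_{j^{\star}}^{-1}XW_{j^{\star}}^{-1})$ has every entry equal to a fixed, not-identically-zero bilinear form in the entries of $W_{j^{\star}}^{-1}$; so the set of $W_{j^{\star}}\succ0$ at which $\D_{j^{\star}}$ --- or, after cancellation against the finitely many other summands in (\ref{eq:Hessdef}), the relevant block of $\Hess$ --- has any vanishing entry is a real-analytic subset of measure zero, and a general-purpose solver following (an approximation of) the central path for generic data $C,A_{i},b$ visits a scaling point outside this degenerate locus.

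The rest is bookkeeping. A graph containing a $k$-clique has treewidth at least $k-1$, so by \corref{treewidth} \emph{every} symmetric reordering $\Pi'\Hess\Pi'^{T}$ --- including whichever one the interior-point method chooses, subject only to $\omega(\Pi'\Hess\Pi'^{T})\le\omega(\Hess)$ as in \defref{ipm} --- has frontsize $\omega(\Pi'\Hess\Pi'^{T})\ge k$. Feeding $\Hess$, which has at least $n$ rows, into \propref{chol} (equivalently, re-running the lower-bound bookkeeping in the proof of \corref{normal}) then shows that factoring $\L=\chol(\Pi'\Hess\Pi'^{T})$ costs at least $\tfrac{1}{6}(k-1)^{3}$ arithmetic operations and at least $\tfrac{1}{2}(k-1)^{2}$ units of storage. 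Since $k-1=\tfrac{1}{2}\omega(\omega+1)-1\ge\tfrac{1}{2}\omega^{2}\ge\tfrac{1}{2}(1+\tau_{\star})^{2}\ge\tfrac{1}{2}\tau_{\star}^{2}$ whenever $\tau_{\star}\ge1$ (so that $\omega\ge2$), these are $\Omega(\tau_{\star}^{6})$ time and $\Omega(\tau_{\star}^{4})$ memory, while for $\tau_{\star}\le1$ they are absorbed into the $\Omega(m+n)$ I/O cost; adding the two parts gives the claim. The step I expect to be the real obstacle is the genericity claim of the second paragraph: certifying that a \emph{dense} scaling point is genuinely encountered. I would most likely state the conclusion in its sharp form --- for every value of $\tau_{\star}$ there is a full-measure set of instances on which \algref{myalg} incurs this cost for all $\Pi$, matching \thmref{total} --- since a pathological instance whose entire central path stays trapped in the measure-zero degenerate locus, though conceivable, is ruled out by an arbitrarily small perturbation of the data that leaves the pattern $E$, and hence $\tau_{\star}$, unchanged.
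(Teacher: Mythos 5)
Your approach is essentially the paper's own: charge the bound to a single Schur complement solve, use $\omega(E_\Pi)\ge 1+\tau_\star$ from \corref{treewidth} to rule out any rescue by $\Pi$, and use the large clique $\clique(\supp(\P_{j^\star}))$ of size $\tfrac12\omega(\omega+1)$ inside $E^{(2)}$ (i.e.\ the lower bound of \thmref{front}) to drive the cubic/quadratic cost of the Cholesky step via \propref{chol}. The paper compresses all of this into two sentences by citing \corref{normal} (whose proof performs exactly your bookkeeping) together with \corref{treewidth}; your proposal is a correct unpacking of the same chain.

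You do flag one point the paper glosses over: \corref{normal}'s lower bound carries the hypothesis that every scaling matrix $\D_j$ is fully dense, and the paper's proof of \corref{lb} invokes \corref{normal} without verifying it. Your observation is genuine --- at the canonical starting iterate of the homogeneous self-dual embedding one has $W_j=I$, so $\D_j$ is diagonal and $\P_{j}\D_j\P_j^T$ is too, so the density hypothesis fails for the first iteration. Your proposed fix via a measure-zero/genericity argument is the natural one and you are right to be uneasy about it, since the central path is a specific curve rather than a ``generic'' point, and the paper's blanket claim for all instances is technically stronger than what either argument delivers without perturbing the data. Two small bookkeeping remarks: your inequality $\tfrac12\omega(\omega+1)-1\ge\tfrac12\omega^2$ needs $\omega\ge 2$, which you correctly gate on $\tau_\star\ge 1$; and the $\Omega(m+n)$ I/O contribution you argue for separately is already baked into the lower bound of \corref{normal} through its memory accounting, so you could have gotten it for free rather than arguing it independently. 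Neither affects correctness.
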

\begin{proof}
The cost of \algref{myalg} is at least a single iteration of the
interior-point method in Step 2. This is no less than $\Omega((\omega-1)^{6}+m+n)$
time and $\Omega((\omega-1)^{4}+m+n)$ memory according to \corref{normal},
where $\omega-1\ge\tau_{\star}$ due to \corref{treewidth}. \qed
\end{proof}

\section{\label{sec:per-iter}Frontsize of the Schur complement sparsity (Proof
of \thmref{front})}

We now turn to prove the frontsize bound on the Schur complement
sparsity $E^{(2)}$ in \thmref{front}, which we identified as our
key technical contribution. Recall that a symmetric sparsity pattern
$E$ of order $n$ can be viewed as the edge set of an undirected
graph $G=(V,E)$ on vertices $V=\{1,2,\dots,n\}$. The underlying
principle behind our proof is the fact that the frontsize is monotone
under the subgraph relation: if $G'=(V',E')$ is a subgraph of $G=(V,E)$,
then $\omega(E)\ge\omega(E')$. 

To state this formally, we denote the sparsity pattern induced by
a subset of vertices $U=\{u_{1},u_{2},\dots,u_{p}\}\subseteq V$ as
follows
\[
E[U]\equiv\{(i,j):(u_{i},u_{j})\in E\}\text{ where }u_{1}<u_{2}<\cdots<u_{p}.
\]
Note that we always sort the elements of $U$. Our definition is made
so that if $E=\spar(X)$, then $E[U]=\spar(X[U,U])$, without any
reordering of the rows and columns. 
\begin{proposition}[Subgraph monotonicity]
\label{prop:del}Let $E$ be a sparsity pattern of order $n$, and
let $U\subseteq\{1,2,\dots,n\}$. Then, for any sparsity pattern $D$
of order $|U|$ that satisfies $E[U]\supseteq D$, we have $\chol(E)[U]\supseteq\chol(D)$,
and therefore $\omega(E)\ge\omega(D)$.
\end{proposition}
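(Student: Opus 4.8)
The plan is to establish $\chol(E)[U]\supseteq\chol(D)$ first and then read off $\omega(E)\ge\omega(D)$ as bookkeeping. I would begin with a routine reduction: $\chol(\cdot)$ is monotone for inclusion among sparsity patterns of a \emph{fixed} order, i.e.\ if $D\subseteq D'$ are both of order $p$ then $\chol(D)\subseteq\chol(D')$. This is a one-line induction on the step index $k$ in \defref{symbchol}: if $E_k\subseteq E_k'$ then the fill set $\{(i,j):(i,k)\in E_k,\ (j,k)\in E_k,\ i>j>k\}$ grows, so $E_{k+1}\subseteq E_{k+1}'$. Applying this to $D\subseteq E[U]$ gives $\chol(D)\subseteq\chol(E[U])$, so it suffices to prove the single inclusion $\chol(E[U])\subseteq\chol(E)[U]$.

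For this core step I would use the classical fill-path characterization of the symbolic Cholesky factor (see e.g.\ \citep{george1981computer,vandenberghe2015chordal}): for $i>j$, one has $(i,j)\in\chol(E)$ if and only if there is a path $j=p_0,p_1,\dots,p_r=i$ in the graph $G=(V,E)$ (so $(p_s,p_{s+1})\in E$ for each $s$) whose interior vertices satisfy $p_s<j$ for $1\le s\le r-1$; the direct-edge case is $r=1$ with no interior vertices. I would either cite this outright or supply its short proof by induction on the elimination steps $E_k\mapsto E_{k+1}$ of \defref{symbchol} — one direction exhibits, for each fill edge created when eliminating $k$, a path through vertices numbered $\le k$, and the other splices two such paths at an eliminated vertex.

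Granting the lemma, the transfer is immediate. Write $U=\{u_1<u_2<\cdots<u_p\}$; since $E[U]$ is defined using the \emph{sorted} order of $U$, the map $t\mapsto u_t$ is strictly increasing and carries each edge of the graph $(\{1,\dots,p\},E[U])$ to an edge of $G$. Given $(a,b)\in\chol(E[U])$ with $a>b$ (the diagonal case being trivial, as $\chol$ always contains it), pick a fill-path $b=q_0,q_1,\dots,q_r=a$ in $E[U]$ with interior vertices $<b$; its image $u_b=u_{q_0},u_{q_1},\dots,u_{q_r}=u_a$ is a path in $G$ with interior vertices $<u_b$, so $(u_a,u_b)\in\chol(E)$, i.e.\ $(a,b)\in\chol(E)[U]$. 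Hence $\chol(E[U])\subseteq\chol(E)[U]$, and with the reduction this yields $\chol(E)[U]\supseteq\chol(D)$. For the frontsize inequality, fix $b\in\{1,\dots,p\}$: from $\chol(D)\subseteq\chol(E)[U]$ the increasing injection $t\mapsto u_t$ sends $\col_{\chol(D)}(b)$ into $\col_{\chol(E)}(u_b)$, so $|\col_{\chol(D)}(b)|\le|\col_{\chol(E)}(u_b)|\le\omega(E)$; taking the maximum over $b$ gives $\omega(D)\le\omega(E)$.

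The main obstacle is essentially expository rather than mathematical: cleanly stating (or locating) the fill-path lemma, and keeping the bookkeeping around the sorted labeling of $U$ straight so that $t\mapsto u_t$ is genuinely order-preserving and edge-preserving. If one prefers to avoid citing the fill-path characterization, a fully self-contained alternative is induction on the order $n$ of $E$: peel off vertex $1$, split into the cases $1\in U$ and $1\notin U$, and match the pattern obtained from $E$ by one elimination step and then restricted to $\{2,\dots,n\}$ against the analogous restriction of the corresponding pattern for $E[U]$. This works but carries noticeably more relabeling overhead than the fill-path route, so I would present the fill-path argument as the primary proof and mention the inductive version only as a remark.
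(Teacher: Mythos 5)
Your proof is correct and ultimately rests on the same core ingredient the paper uses: the fill-path characterization of $\chol(\cdot)$ (for $i>j$, $(i,j)\in\chol(E)$ iff there is an $i$--$j$ path in $E$ whose interior vertices are all $<j$). The execution differs a bit, and both routes are valid. The paper derives from the fill-path lemma two atomic facts — edge-deletion monotonicity and the identity $\chol(E[U])=\chol(E)[U]$ when $U$ omits only isolated vertices — and then handles a general $U$ by first deleting incident edges to isolate each vertex of $\{1,\dots,n\}\setminus U$ before removing it. You instead prove the key inclusion $\chol(E[U])\subseteq\chol(E)[U]$ in a single step, transporting a fill-path of $E[U]$ into one of $E$ via the strictly increasing map $t\mapsto u_t$; this avoids the compose-elementary-deletions step and is arguably the cleaner way to phrase the same idea. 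Two further small differences: you obtain edge-monotonicity directly by induction on $k$ in \defref{symbchol} (more self-contained than deriving it from the fill-path lemma), and you spell out the frontsize conclusion $\omega(D)\le\omega(E)$ explicitly by injecting $\col_{\chol(D)}(b)$ into $\col_{\chol(E)}(u_b)$, which the paper leaves as a ``therefore.'' No gaps.
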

\begin{proof}
It is known that $(i,j)\in\chol(E)$ for $i>j$ holds if and only
if there exists a path $(i,p_{1},p_{2},\dots,p_{\ell},j)$ whose edges
are in $E$, and whose internal nodes are ordered $p_{1},p_{2},\dots,p_{\ell}<j<i$;
see e.g.~\citep[Theorem~6.1]{vandenberghe2015chordal}. It immediately
follows this characterization that $\chol(\cdot)$ is monotone with
respect to the deletion of edges and isolated vertices: (1) if $D\subseteq E$,
then $\chol(D)\subseteq\chol(E)$; (2) we have $\chol(E[U])=\chol(E)[U]$
for $U=\{1,2,\dots,n\}\backslash v$ with isolated vertex $v$. Therefore,
$\chol(\cdot)$ must also be monotone under general vertex and edge
deletions, because we can always delete edges to isolate a vertex
before deleting it. \qed
\end{proof}

Our lower-bound is a direct corollary of the following result, which
gives an exact value for the frontsize of a certain ``lifted'' sparsity
pattern. 
\begin{lemma}[Quadratic lift]
\label{lem:quadlift}Let $E$ be an arbitrary sparsity pattern of
order $n$. Define $F=\chol(E)$ and the lifted sparsity pattern
$F^{(2)}=\bigcup_{k=1}^{n}\clique(\P_{k})$ in which each $\P_{k}$
is implicitly defined to satisfy $\P_{k}^{T}\svec_{F}(Y)=\svec(Y[\col_{F}(k),\col_{F}(k)])$
for all $Y\in\S_{F}^{n}$. Then, we have $\chol(F^{(2)})=F^{(2)}$
and $\omega(F^{(2)})=\frac{1}{2}\omega(E)[\omega(E)+1]$.
\end{lemma}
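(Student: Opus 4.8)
The plan is to exploit the very rigid block structure of the maps $\P_k$. Since $F=\chol(E)$ is a filled pattern, the natural ordering is a perfect elimination ordering of $F$, so each column set $\col_F(k)$ is a clique in $F$; consequently every entry $Y[a,b]$ with $a,b\in\col_F(k)$ is represented by a coordinate of $\svec_F(Y)$, and $\P_k$ is nothing more than the selection of those coordinates (up to the fixed $\sqrt2$ scalings built into $\svec$). Hence $\supp(\P_k)$ is exactly the coordinate set $Q_k\equiv\{\idx_F(a,b):a,b\in\col_F(k),\ a\ge b\}$, which has cardinality $\tfrac{1}{2}\omega_k(\omega_k+1)$ where $\omega_k\equiv|\col_F(k)|$, and $F^{(2)}=\bigcup_k\clique(Q_k)$ is a union of cliques. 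Since $t\mapsto\tfrac{1}{2}t(t+1)$ is increasing and $\omega=\max_k\omega_k$, it suffices to prove (i) $\chol(F^{(2)})=F^{(2)}$, i.e. the raster ordering is a perfect elimination ordering of $F^{(2)}$, and (ii) $\max_v|\col_{F^{(2)}}(v)|=\max_k|Q_k|=\tfrac{1}{2}\omega(\omega+1)$.

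The engine of the argument is one elementary propagation property of symbolic Cholesky, read off directly from \defref{symbchol}: if $(j,k)\in F$ with $k<j$, then every $i\in\col_F(k)$ with $i>j$ also satisfies $(i,j)\in F$; in short, $\{i\in\col_F(k):i>j\}\subseteq\col_F(j)$. The key claim I would then establish is: \emph{for every coordinate $v=\idx_F(i,j)$ of $F^{(2)}$, the forward-closed neighbourhood $\{v\}\cup\{u:u>v,\ (u,v)\in F^{(2)}\}$ is contained in $Q_j$.} Because $F^{(2)}$ is the union of the cliques $\clique(Q_k)$, any neighbour $u>v$ of $v$ lies in some $Q_k$ together with $v$; then $v\in Q_k$ forces $j\in\col_F(k)$ and hence $k\le j$. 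If $k=j$ we are done; if $k<j$, writing $u=\idx_F(a,b)$ with $a,b\in\col_F(k)$ and $a\ge b$, the fact that $u$ follows $v$ in the raster (column-then-row) ordering forces either $b>j$, or $b=j$ with $a>i\ge j$. In both cases $a$ — and also $b$ when $b>j$ — is an element of $\col_F(k)$ exceeding $j$, so the propagation property applied to $(j,k)\in F$ places it in $\col_F(j)$; together with $j\in\col_F(j)$ this gives $a,b\in\col_F(j)$, i.e. $u\in Q_j$. Since $v\in Q_j$ as well (using $(i,j)\in F$), the claim follows.

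With the claim in hand, the forward-closed neighbourhood of each $v$ is a subset of the clique $Q_j$, hence itself a clique; this is exactly the statement that the raster ordering is a perfect elimination ordering, so $\chol(F^{(2)})=F^{(2)}$. Then $\col_{F^{(2)}}(v)=\{v\}\cup\{u>v:(u,v)\in F^{(2)}\}\subseteq Q_j$, giving $|\col_{F^{(2)}}(v)|\le\tfrac{1}{2}\omega_j(\omega_j+1)\le\tfrac{1}{2}\omega(\omega+1)$ for every $v$, which is the upper half of (ii). For the lower half, pick $j^\star$ with $\omega_{j^\star}=\omega$ and set $v^\star=\idx_F(j^\star,j^\star)$. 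Since $j^\star=\min\col_F(j^\star)$ and the diagonal coordinate comes first within its column block, $v^\star$ is the raster-minimum of $Q_{j^\star}$, so every other element of $Q_{j^\star}$ is a forward neighbour of $v^\star$; hence $\col_{F^{(2)}}(v^\star)\supseteq Q_{j^\star}$ and $\omega(F^{(2)})\ge|Q_{j^\star}|=\tfrac{1}{2}\omega(\omega+1)$. Combining the two bounds yields $\omega(F^{(2)})=\tfrac{1}{2}\omega(\omega+1)$.

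The main obstacle is the second half of the key claim: controlling, through the raster ordering, a forward neighbour of $v$ that arrives via some clique $Q_k$ with $k<j$, and showing it cannot escape $Q_j$. This is precisely where the column-before-row convention of the raster ordering is essential — it guarantees $k\le j$ whenever $v\in Q_k$ — and where the symbolic-Cholesky propagation property does the real work of confining the other endpoint to $\col_F(j)$. The remaining ingredients (identifying $\supp(\P_k)$ with $Q_k$, the cliqueness of $\col_F(k)$, the minimality of the diagonal coordinate within a column block) are routine bookkeeping.
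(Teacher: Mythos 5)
Your proof is correct, and it reaches the same conclusion by a genuinely more direct route than the paper's. The paper factors the argument through an abstract intermediate notion --- the \emph{sorted running intersection property} --- proving first that the column sets $J_k=\col_F(k)$ of a zero-fill $F$ satisfy this property (\propref{zf->rip}), then that the lifting $J_k\mapsto J_k^{(2)}$ preserves it (\lemref{lift}), and finally that any union of cliques over such a sequence is zero-fill with frontsize $\max_k|J_k|$ (\propref{rip->zf}, via \lemref{colrip}). You instead identify $\supp(\P_k)=Q_k$, and then directly compute the forward-closed neighbourhood of each vertex $v=\idx_F(i,j)$ in $F^{(2)}$ and show it is contained in the single clique $Q_j$; this simultaneously yields that the natural (raster) ordering is a perfect elimination ordering of $F^{(2)}$ and that $\omega(F^{(2)})\le\max_j|Q_j|$, with the matching lower bound coming from the observation that $\idx_F(j^\star,j^\star)$ is the raster minimum of $Q_{j^\star}$. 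Both proofs hinge on exactly the same two facts --- the raster property of $\idx_F$ and the zero-fill (elimination) property of $F=\chol(E)$, which you restate as the propagation property $\{i\in\col_F(k):i>j\}\subseteq\col_F(j)$ whenever $(j,k)\in F$ --- but your argument inlines the abstraction. The paper's version is slightly more modular (its RIP lemmas could in principle be reused for other lifted patterns); yours is shorter and more self-contained, and your explicit remark that $k\le j$ whenever $v\in Q_k$ (forced by the column-before-row convention) is exactly the point the paper secures via the ``sorted'' qualifier of its running intersection property.

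One small presentational caution: when you say the forward-closed neighbourhood of $v$ being a subset of a clique $Q_j$ ``is exactly the statement that the raster ordering is a perfect elimination ordering,'' it is worth spelling out that this is because $Q_j$ is a clique \emph{of} $F^{(2)}$ (since $\clique(Q_j)\subseteq F^{(2)}$), hence every subset of $Q_j$ induces a clique, which is the required PEO condition at $v$; and that $\omega(F^{(2)})$ is by definition a maximum over columns of $\chol(F^{(2)})$, so you must invoke $\chol(F^{(2)})=F^{(2)}$ before replacing it with a maximum over columns of $F^{(2)}$. Both are implicit in your write-up and neither is a gap, but stating them would make the logic airtight.
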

Observe that $E^{(2)}\supseteq F^{(2)}=\bigcup_{k=1}^{n}\clique(\P_{k})$
via (\ref{eq:E2def}), so it follows immediately from \propref{del}
\[
\omega(E^{(2)})\ge\omega(F^{(2)})=\frac{1}{2}\omega(E)[\omega(E)+1],
\]
which is precisely the lower-bound in \thmref{front}. For the upper-bound,
we will use $\o F=\chol(\o E)$, the symbolic Cholesky factor of the
extended aggregate sparsity pattern $\o E$, to construct a similarly
lifted $\o F^{(2)}$. Our key insight is that $E^{(2)}$ can be obtained
from $\o F^{(2)}$ via vertex and edge deletions.
\begin{lemma}[Sparsity overestimate]
\label{lem:ub}Let $\o E$ and $E^{(2)}$ be the sparsity patterns
defined in (\ref{eq:Ebardef}) and (\ref{eq:E2def}). Define $\o F=\chol(\o E)$
and the lifted sparsity pattern $\o F^{(2)}=\bigcup_{k=1}^{n}\clique(\o{\P}_{k})$
in which each $\o{\P}_{k}$ is implicitly defined to satisfy $\o{\P}_{k}^{T}\svec_{\o F}(Y)=\svec(Y[\col_{\o F}(k),\col_{\o F}(k)])$
for all $Y\in\S_{\o F}^{n}$. Then, $E^{(2)}\subseteq\o F^{(2)}[V^{(2)}]$
holds for $V^{(2)}=\{\idx_{\overline{F}}(i,j):i,j\in F\}$.
\end{lemma}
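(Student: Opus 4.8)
\emph{Proof proposal.} The plan is to exhibit an explicit order-preserving bijection between the ground set $\{1,\dots,|F|\}$ of $E^{(2)}$ and the coordinate set $V^{(2)}\subseteq\{1,\dots,|\o F|\}$, and then to verify the edge inclusion one generator family at a time. First I would record the containments $E\subseteq\o E$ (immediate from $\spar(A_i)\subseteq\clique(A_i)$) and hence, by \propref{del} applied with $U=\{1,\dots,n\}$, that $F=\chol(E)\subseteq\chol(\o E)=\o F$. Since $\o F$ contains all of its diagonal elements and $F\subseteq\o F$, the rule $\idx_F(i,j)\mapsto\idx_{\o F}(i,j)$ defines an injection of $\{1,\dots,|F|\}$ onto $V^{(2)}=\{\idx_{\o F}(i,j):(i,j)\in F\}$; call it $\phi$. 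Because $\svec_F$ and $\svec_{\o F}$ are built by the \emph{same} column-stacking construction -- diagonal entry of column $j$, then below-diagonal entries of column $j$ in increasing row order, for $j=1,\dots,n$ -- passing from $F$ to $\o F$ only interleaves new coordinates without permuting the old ones, so $\phi$ is monotone. This is exactly the identification under which $\o F^{(2)}[V^{(2)}]$ is an induced-subgraph pattern in the sense of the $E[U]$ notation, so it suffices to prove $(\phi(k),\phi(\ell))\in\o F^{(2)}$ whenever $(k,\ell)\in E^{(2)}$.

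Second, I would treat the two families of generators in (\ref{eq:E2def}) separately, using the classical fact that in a chordal graph every clique lies inside a front: if $Q$ is a clique of $\chol(D)$ then $Q\subseteq\col_{\chol(D)}(\min Q)$, because $\min Q$ is below and adjacent (in $\chol(D)$) to every other element of $Q$. For the semidefinite generators, unwinding the definition of $\P_j$ gives $\supp(\P_j)=\{\idx_F(p,q):p,q\in\col_F(j),\,p\ge q\}$, and $\col_F(j)\subseteq\col_{\o F}(j)$ because $F\subseteq\o F$; hence $\phi(\supp(\P_j))\subseteq\supp(\o{\P}_j)$, so $\phi(\clique(\supp(\P_j)))\subseteq\clique(\supp(\o{\P}_j))\subseteq\o F^{(2)}$. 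For the linear generators, $a_i=\svec_F(A_i)$ is well-defined since $\spar(A_i)\subseteq E\subseteq F$, and $\supp(A_i)$ induces a clique in $(V,\o E)$ -- this is exactly why $\clique(A_i)$ was put into $\o E$ -- hence a clique of $\o F$; with $k_i=\min\supp(A_i)$ we get $\supp(A_i)\subseteq\col_{\o F}(k_i)$, so $\phi(\supp(a_i))=\{\idx_{\o F}(p,q):(p,q)\in\spar(A_i)\}\subseteq\supp(\o{\P}_{k_i})$ and therefore $\phi(\clique(\supp(a_i)))\subseteq\clique(\supp(\o{\P}_{k_i}))\subseteq\o F^{(2)}$. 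Taking the union over $i\in\{1,\dots,m\}$ and $j\in\{1,\dots,n\}$, and then reindexing through the order-preserving $\phi$, gives $E^{(2)}\subseteq\o F^{(2)}[V^{(2)}]$.

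The genuinely delicate point -- and the step I expect to be the main obstacle in a careful write-up -- is the monotonicity of $\phi$: one must check that for $(i,j),(i',j')\in F$ one has $\idx_F(i,j)<\idx_F(i',j')$ if and only if $\idx_{\o F}(i,j)<\idx_{\o F}(i',j')$. This is precisely where the emphasis on the raster ordering of $\idx_F$ in the preliminaries is used: both comparisons reduce to the same lexicographic test (first on the column index, then on the row index), and $F\subseteq\o F$ inserts additional coordinates without reordering the existing ones. Everything else is routine bookkeeping: the index descriptions of $\supp(\P_j)$ and $\supp(a_i)$ follow directly from the definitions of $\P_j$, $\svec_F$, and $\idx_F$, and the two structural inputs -- monotonicity of $\chol(\cdot)$ and the clique-in-a-front fact -- are standard.
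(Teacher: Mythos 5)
Your proposal is correct and follows essentially the same route as the paper's own proof: split $E^{(2)}$ into the linear generators $\clique(\supp(a_i))$ and the SDP generators $\clique(\supp(\P_j))$, embed each into some $\clique(\supp(\o\P_k))$ using $\col_F(k)\subseteq\col_{\o F}(k)$ for the SDP part and the clique-in-a-front observation $\supp(A_i)\subseteq\col_{\o F}(\min\supp(A_i))$ for the linear part. The one presentational difference is that the paper buries the order-preservation of the index map in the notation $\supp(\o\P_k)[V^{(2)}]$, whereas you make the map $\phi:\idx_F(i,j)\mapsto\idx_{\o F}(i,j)$ and its monotonicity (via the common raster ordering) explicit --- a small clarification, not a change of argument.
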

Substituting $E^{(2)}\subseteq\o F^{(2)}[V^{(2)}]$ with the exact
frontsize of $\omega(\o F^{(2)})$ from \lemref{quadlift} yields
\[
\omega(E^{(2)})\le\omega(\o F^{(2)})=\frac{1}{2}\omega(\o E)[\omega(\o E)+1],
\]
which is precisely the upper-bound in \thmref{front}. Finally, if
$E=\o E$, then $F^{(2)}=\o F^{(2)}=E^{(2)}$, so $\chol(E^{(2)})=E^{(2)}$
via \lemref{quadlift}.

In the remainder of this section, we will prove \lemref{quadlift}
and \lemref{ub}.

\subsection{Exact frontsize of a lifted sparsity pattern (Proof of \lemref{quadlift})}

Our proof of \lemref{quadlift} is based on a connection between
zero-fill sparsity patterns, for which sequential Gaussian elimination
results in no additionally fill-in, and a ``sorted'' extension of
the running intersection property. 
\begin{definition}[ZF]
The sparsity pattern $F$ is said to be\emph{ zero-fill} if $F=\chol(F)$.
\end{definition}
Equivalently, if $F$ is zero-fill, then $(i,k)\in F$ and $(j,k)\in F$
implies $(i,j)\in F$ for $i>j>k$ via the definition of the symbolic
Cholesky factor (\defref{symbchol}). 

\begin{definition}[RIP]
\label{def:rip}The sequence of subsets $J_{1},J_{2},\dots,J_{\ell}$
with $J_{j}\subset\N$ satisfies the \emph{sorted running intersection
property} if there exists a parent pointer $\p:\{1,2,\dots,\ell-1\}\to\{2,3,\dots,\ell\}$
such that the following holds for all $1\le j<\ell$:
\begin{gather*}
\p(j)>j,\quad J_{\p(j)}\supseteq J_{j}\cap(J_{j+1}\cup J_{j+2}\cup\cdots\cup J_{\ell}),\quad\min\{J_{\p(j)}\}>\max\{J_{j}\backslash J_{\p(j)}\}.
\end{gather*}
\end{definition}
The symbolic Cholesky factor $F=\chol(E)$ for a sparsity pattern
$E$ is the canonical example of a zero-fill sparsity pattern. In
turn, the corresponding column sets $J_{j}=\col_{F}(j)$ are the canonical
example of sequence of subsets that satisfy the sorted version of
the running intersection property.
\begin{proposition}[ZF$\implies$RIP]
\label{prop:zf->rip}Let $F$ be a zero-fill sparsity pattern of
order $n$. Then, the sequence of subsets $J_{1},J_{2},\dots,J_{n}$
with $J_{j}=\col_{F}(j)\equiv\{j\}\cup\{i>j:(i,j)\in F\}$ satisfies
the sorted running intersection property. 
\end{proposition}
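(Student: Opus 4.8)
Let me sketch how I would prove that if $F$ is zero-fill, then the column sets $J_j = \col_F(j)$ satisfy the sorted running intersection property.

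The plan is to exhibit an explicit parent pointer and check the three defining conditions of the sorted running intersection property (\defref{rip}) one at a time. For each $j \in \{1, \dots, n-1\}$ I would set $\p(j) = \min\{i : i > j,\ (i,j) \in F\}$ whenever column $j$ has an off-diagonal entry (equivalently $\col_F(j) \ne \{j\}$), and set $\p(j) = j+1$ otherwise; either way $\p(j) > j$ and $\p(j) \in \{2, \dots, n\}$, so $\p$ is a legitimate parent pointer. The whole argument then rests on the classical elimination-tree fact that the off-diagonal structure of a column lies inside that of its parent, namely $\col_F(j) \setminus \{j\} \subseteq \col_F(\p(j))$.

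To prove that containment I would argue directly from zero-fill. In the nontrivial case, write $p = \p(j)$ and take $i \in \col_F(j)$ with $i \ne j$, so $i > j$ and $(i,j) \in F$. Minimality of $p$ rules out $i < p$; if $i = p$ then $i \in \col_F(p)$ trivially; and if $i > p$ then $(i,j), (p,j) \in F$ with $i > p > j$, so the zero-fill property (\defref{symbchol}, via $F = \chol(F)$) forces $(i,p) \in F$, i.e.\ $i \in \col_F(p)$. Given this, the three conditions fall out quickly. The containment condition holds because every element of $\col_F(k)$ is $\ge k$, so $j$ lies in none of $\col_F(j+1), \dots, \col_F(n)$, whence $\col_F(j) \cap (\col_F(j+1) \cup \cdots \cup \col_F(n)) \subseteq \col_F(j) \setminus \{j\} \subseteq \col_F(\p(j))$. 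The ``sorted'' clause holds because $\col_F(j) \setminus \col_F(\p(j)) \subseteq \{j\}$ while $\p(j)$ is the least element of $\col_F(\p(j))$, so $\min \col_F(\p(j)) = \p(j) > j \ge \max\bigl(\col_F(j) \setminus \col_F(\p(j))\bigr)$.

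I do not expect a genuine obstacle: the only substantive ingredient is the parent-containment claim, which is precisely the transitivity encoded in $F = \chol(F)$, and the sorted refinement is then pure bookkeeping. The one point deserving care is making $\p$ total on $\{1, \dots, n-1\}$ with the correct codomain --- which is exactly why columns with no off-diagonal entry need the separate, ad hoc assignment $\p(j) = j+1$: for such a column $\col_F(j) = \{j\}$ is disjoint from $\col_F(j+1) \cup \cdots \cup \col_F(n)$, so every condition is satisfied vacuously.
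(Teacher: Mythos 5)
Your proposal is correct and follows essentially the same route as the paper: you choose the same parent pointer $\p(j)=\min\{i>j:(i,j)\in F\}$ (differing only in the ad hoc fallback for diagonal-only columns, $j+1$ versus the paper's $n$, both of which work vacuously), prove the containment by the same $i=\p(j)$ / $i>\p(j)$ case split using $F=\chol(F)$, and observe that $\col_F(j)\setminus\col_F(\p(j))=\{j\}$ to dispatch the sorted clause. The only cosmetic difference is that you factor out the elimination-tree fact $\col_F(j)\setminus\{j\}\subseteq\col_F(\p(j))$ as an explicit intermediate claim, whereas the paper verifies the running-intersection inclusion directly; the underlying argument is identical.
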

\begin{proof}
Define $\p(j)=\min\{i>j:i\in J_{j}\}$ if $|J_{j}|>1$ and $\p(j)=n$
if $|J_{j}|=1$. Clearly, $\p(j)>j$ holds for all $1\le j<n$. To
prove $J_{\p(j)}\supseteq J_{j}\cap\bigcup_{w=j+1}^{\ell}J_{w}$,
let $i\in J_{j}\cap J_{w}$ for some $w>j$. We prove that $i\in J_{\p(j)}$
via the following steps: 
\begin{itemize}
\item We have $i>j$, because $i\in J_{w}$ implies that $i\ge w>j$. 
\item If $i=\p(j)$, then $i\in J_{\p(j)}$ by definition. 
\item If $i>\p(j)$, then $(i,j)\in F$ and $(\p(j),j)\in F$ for $i>\p(j)>j$
implies $(i,\p(j))\in F$, and hence $i\in J_{\p(j)}$. 
\end{itemize}
Finally, we prove $\min\{J_{\p(j)}\}>\max\{J_{j}\backslash J_{\p(j)}\}$
by noting that $\max\{J_{j}\backslash J_{\p(j)}\}=j$ with our construction,
and that $i=\min J_{\p(j)}$ must satisfy $i\in J_{\p(j)}$ and therefore
$i\ge\p(j)>j$. \qed
\end{proof}

Our proof is based on the fact that the ``lifted'' sparsity pattern
$F^{(2)}$ can be constructed as $F^{(2)}=\bigcup_{k=1}^{n}\clique(J_{k}^{(2)})$
with respect to the following ``lifted'' index sets
\begin{gather}
J_{k}^{(2)}\equiv\idx_{F}(\clique(J_{k}))=\{\idx_{F}(i,j):i,j\in J_{k},\quad i\ge j\}.\label{eq:Jk2_def}
\end{gather}
We need to show that, if the original index sets $J_{1},J_{2},\dots,J_{k}$
satisfy the running intersection property, then the lifted index sets
$J_{1}^{(2)},J_{2}^{(2)},\dots,J_{k}^{(2)}$ will inherit the running
intersection property. Our key insight is that the index operator
$\idx_{F}$ implements a \emph{raster ordering}.
\begin{lemma}[Raster ordering]
The ordering $\idx_{F}:F\to\N$ satisfies the following, for all
$(i,j)\in F$ with $i\ge j$ and $(i',j')\in F$ with $i'\ge j'$:
\begin{itemize}
\item If $j>j'$, then $\idx(i,j)>\idx(i',j')$ holds. 
\item If $j=j'$ and $i>i'$, then $\idx(i,j)>\idx(i',j')$ holds. 
\end{itemize}
\end{lemma}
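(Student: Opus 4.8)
The plan is to prove both claims by directly unwinding the column-stacking construction that the paper has fixed for $\svec_F$ and its companion operator $\idx_F$. Let $X \in \S_F^n$, write $x = \svec_F(X) = (x_1,\dots,x_n) \in \R^{|F|}$, and set $n_j \equiv |\col_F(j)|$, which is exactly the length of the $j$-th block $x_j$ (one diagonal entry $X[j,j]$ together with the entries $\sqrt{2}\,X[i,j]$ over $i > j$ with $(i,j) \in F$). Putting $N_0 = 0$ and $N_j = n_1 + n_2 + \cdots + n_j$, the block $x_j$ occupies precisely the positions $\{N_{j-1}+1, N_{j-1}+2, \dots, N_j\}$ of $x$. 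Since $\idx_F(i,j)$ is by definition the position in $x$ of the entry of $x_j$ associated with the pair $(i,j)$, we get $\idx_F(i,j) \in \{N_{j-1}+1, \dots, N_j\}$ for every $(i,j) \in F$ with $i \ge j$. This single observation reduces the lemma to comparing positions across, and within, these contiguous blocks.

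For the first bullet, suppose $j > j'$. Since $N_0 \le N_1 \le \cdots \le N_n$ is nondecreasing and $j - 1 \ge j'$, we obtain $\idx_F(i,j) \ge N_{j-1} + 1 \ge N_{j'} + 1 > N_{j'} \ge \idx_F(i',j')$, which is the claim. In words, the column index is the primary sort key of the ordering, simply because the blocks $x_1, x_2, \dots, x_n$ are concatenated in increasing order of column.

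For the second bullet, suppose $j = j'$, so $\idx_F(i,j)$ and $\idx_F(i',j)$ both fall in the same block $x_j$. Here we use the explicit within-block order prescribed by the construction: $x_j$ lists the diagonal entry $X[j,j]$ first, followed by the entries $\sqrt{2}\,X[i,j]$ with $i$ ranging over $\col_F(j) \setminus \{j\} = \{i > j : (i,j) \in F\}$ taken in increasing order. Hence the map sending $i \in \col_F(j)$ to the offset of $\idx_F(i,j)$ inside the block is strictly increasing ($i = j$ has offset $1$, and each successive element of $\col_F(j)$ has the next offset), so $i > i'$ forces $\idx_F(i,j) > \idx_F(i',j)$. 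This records that the row index is the secondary sort key.

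I do not anticipate a genuine obstacle: once the column-stacking convention is pinned down — which the paper has already done — the statement is a bookkeeping identity. The only point that needs care is to keep the within-block convention fully explicit (the diagonal entry precedes the strictly subdiagonal entries, and the latter are enumerated by increasing row index), since this is exactly the feature that makes $\idx_F$ a raster ordering and that the subsequent lifting arguments rely on.
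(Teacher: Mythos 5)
Your proof is correct, and it is essentially the only possible argument here: the paper does not supply its own proof of the raster-ordering lemma, treating it as a direct consequence of the column-stacking construction of $\svec_F$ and $\idx_F$, and your write-up simply makes that bookkeeping explicit. Your key observations — that block $x_j$ has length $|\col_F(j)|$ and occupies a contiguous range of positions $\{N_{j-1}+1,\dots,N_j\}$, that the column index is the primary sort key because blocks are concatenated in increasing $j$, and that within a block the diagonal entry comes first (offset $1$, corresponding to $i=j$, the smallest admissible row index) followed by the off-diagonal entries in increasing $i$ — are all sound and correctly assemble into the two bullets. You are also right to flag that the within-block ordering of $(X[i,j]:i>j,\ (i,j)\in F)$ must be taken in increasing $i$ for the second bullet to hold; the paper leaves this implicit in its tuple notation, and it is exactly the convention that the downstream lifting lemma (\lemref{lift}) relies on. No gap, and no divergence from the paper's intent.
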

\begin{lemma}
\label{lem:lift}Let the sequence of subsets $J_{1},J_{2},\dots,J_{\ell}$
with $J_{j}\subset\N$ satisfy the sorted running intersection property.
Then, $J_{1}^{(2)},J_{2}^{(2)},\dots,J_{\ell}^{(2)}$ also satisfy
the sorted running intersection property. 
\end{lemma}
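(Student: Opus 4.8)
The plan is to show that the parent pointer which already witnesses the sorted running intersection property for $J_1,\dots,J_\ell$ also witnesses it for the lifted sequence $J_1^{(2)},\dots,J_\ell^{(2)}$, so that no new combinatorial structure needs to be built. Fix such a pointer $\p:\{1,\dots,\ell-1\}\to\{2,\dots,\ell\}$. The condition $\p(j)>j$ transfers verbatim, so I only need to check, for each $j$, the running-intersection inclusion $J_{\p(j)}^{(2)}\supseteq J_j^{(2)}\cap(J_{j+1}^{(2)}\cup\cdots\cup J_\ell^{(2)})$ and the sorted condition $\min J_{\p(j)}^{(2)}>\max(J_j^{(2)}\setminus J_{\p(j)}^{(2)})$. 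Along the way I will use that $\idx_F$ is injective and defined on each $\clique(J_k)$; the latter holds in the intended application because the columns $\col_F(k)$ of the zero-fill pattern $F$ are cliques of $F$.

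The running-intersection inclusion is essentially formal. Suppose $v\in J_j^{(2)}\cap J_w^{(2)}$ for some $w>j$. Writing $v=\idx_F(i,j')$ with $i\ge j'$, injectivity of $\idx_F$ forces $i,j'\in J_j$ and $i,j'\in J_w$ simultaneously. Since $w\in\{j+1,\dots,\ell\}$, the sorted running intersection property of the original sequence gives $i,j'\in J_{\p(j)}$, hence $v=\idx_F(i,j')\in J_{\p(j)}^{(2)}$.

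The sorted condition is the heart of the argument, and this is where the raster ordering is indispensable. Abbreviate $p=\p(j)$, $\mu=\min J_p$, and $\sigma=\max(J_j\setminus J_p)$; the original sorted condition says $\sigma<\mu$, and consequently every element of $J_j$ that exceeds $\sigma$ — in particular every element of $J_j$ that is $\ge\mu$ — lies in $J_p$. Two observations then finish the proof. First, since $\idx_F$ lists pairs by increasing column index and, within a column, by increasing row index, the least element of $J_p^{(2)}=\idx_F(\clique(J_p))$ is exactly $\idx_F(\mu,\mu)$. Second, I claim $J_j^{(2)}\setminus J_p^{(2)}=\{\idx_F(i,j'):i,j'\in J_j,\ i\ge j',\ j'\notin J_p\}$: if the column index $j'$ of a pair drawn from $\clique(J_j)$ lies in $J_p$, then $j'\ge\mu$, so $i\ge j'\ge\mu$ forces $i\in J_p$ too, placing the pair in $J_p^{(2)}$; conversely any pair with $j'\notin J_p$ is clearly absent from $J_p^{(2)}$. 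But $j'\notin J_p$ means $j'\le\sigma<\mu$, so the raster ordering gives $\idx_F(i,j')<\idx_F(\mu,\mu)$ for every pair in $J_j^{(2)}\setminus J_p^{(2)}$. Combining, $\max(J_j^{(2)}\setminus J_p^{(2)})<\idx_F(\mu,\mu)=\min J_p^{(2)}$, which is the desired sorted condition (and the condition is vacuous when $J_j\subseteq J_p$).

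I expect the bookkeeping in the last paragraph to be the main obstacle: pinning down precisely which lifted indices fall outside $J_{\p(j)}^{(2)}$, and then marrying the threshold split of $J_j$ at $\mu=\min J_{\p(j)}$ with the raster property of $\idx_F$. The first two conditions are routine once the injectivity of $\idx_F$ is in hand. Together with \propref{zf->rip} (and a converse pairing zero-fill patterns with the sorted running intersection property), this lemma is the engine that will drive the exact frontsize computation in \lemref{quadlift}.
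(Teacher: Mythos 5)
Your proof is correct and follows essentially the same route as the paper's: reuse the original parent pointer, establish the running-intersection inclusion through the injectivity of $\idx_F$, and deduce the sorted condition from the raster ordering by observing that every pair in $J_j^{(2)}\setminus J_{\p(j)}^{(2)}$ must have column index outside $J_{\p(j)}$, hence below $\mu=\min J_{\p(j)}$. The paper computes the exact maximum $\idx_F(\max J_j,\max(J_j\setminus J_{\p(j)}))$ of the set difference via a three-case analysis, whereas you show directly that every element is below $\idx_F(\mu,\mu)$, a minor but slightly cleaner reorganization of the same argument.
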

\begin{proof}
Let $\p(\cdot)$ denote the parent pointer that verifies the sorted
running intersection property in $J_{1},J_{2},\dots,J_{\ell}$. We
will verify that $\p(\cdot)$ also proves the same property in $J_{1}^{(2)},J_{2}^{(2)},\dots,J_{\ell}^{(2)}$. 

First, to prove $J_{u}^{(2)}\cap\bigcup_{v=u+1}^{\ell}J_{v}^{(2)}\subseteq J_{\p(u)}^{(2)},$
let $k\in J_{u}^{(2)}\cap J_{v}^{(2)}$ for $v>u$. The fact that
$k\in J_{u}^{(2)}=\idx(\clique(J_{u}))$ implies $k=\idx(i,j)$ for
some $i,j$ such that $i,j\in J_{u}$. Similarly, $k\in J_{v}^{(2)}=\idx(\clique(J_{v}))$
and the bijectivity of $\idx$ on $F$ imply that the same $i,j$
also satisfy $i,j\in J_{v}$, where we recall that $v>u$. We conclude
$i,j\in J_{u}\cap J_{v}\subseteq J_{\p(u)}$ and therefore $k=\idx(i,j)\in J_{\p(u)}^{(2)}$.

Next, we prove $\min\{J_{\p(u)}^{(2)}\}>\max\{J_{u}^{(2)}\backslash J_{\p(u)}^{(2)}\}$
by establishing two claims:
\begin{itemize}
\item $\min\{J_{p(u)}^{(2)}\}=\idx(\alpha,\alpha)$ where $\alpha=\min J_{\p(u)}$.
For any $\idx(i,j)\in J_{\p(u)}^{(2)}$ where $i\ge j$, we must have
$i,j\in J_{\p(u)}$. It follows from the raster property that $\idx(i,j)$
is minimized with $i=j=\min J_{\p(u)}$.
\item $\max\{J_{u}^{(2)}\backslash J_{\p(u)}^{(2)}\}=\idx(\beta,\gamma)$
where $\beta=\max\{J_{u}\}$ and $\gamma=\max\{J_{u}\backslash J_{\p(u)}\}$.
We partition $J_{u}$ into $N_{u}=J_{u}\backslash J_{\p(u)}$ and
$A_{u}=J_{u}\cap J_{\p(u)}$. 
\begin{itemize}
\item For any $\idx(i,j)\in J_{u}^{(2)}\backslash J_{p(u)}^{(2)}$ where
$i\ge j$, we can have one of the following three cases: 1) $i\in N_{u}$
and $j\in A_{u}$; 2) $i\in A_{u}$ and $j\in N_{u}$; or 3) $i\in N_{u}$
and $j\in N_{u}$. 
\item We observe that the first case $i\in N_{u}$ and $j\in A_{u}$ is
impossible. Indeed, applying $j\ge\min\{J_{\p(u)}\}>\max\{J_{u}\backslash J_{\p(u)}\}\ge i$
would yield a contradiction with $i\ge j$. 
\item Taking the union of the two remaining cases yields $i\in J_{u}=A_{u}\cup N_{u}$
and $j\in N_{u}$. It follows from the raster property that $\idx(i,j)$
is maximized with $i=\max J_{u}$ and $j=\max N_{u}$.
\end{itemize}
\end{itemize}
With the two claims established, the hypothesis that $\min\{J_{\p(u)}\}>\max\{J_{u}\backslash J_{\p(u)}\}$
implies that $\alpha>\gamma$, and therefore $\idx(\alpha,\alpha)>\idx(\beta,\gamma)$
as desired. \qed
\end{proof}

In reverse, a sequence of subsets $J_{1}^{(2)},J_{2}^{(2)},\dots,J_{\ell}^{(2)}$
that satisfy the sorted running intersection property immediately
give rise to a zero-fill sparsity pattern $F^{(2)}=\bigcup_{k=1}^{\ell}\clique(J_{k}^{(2)})$
with $\omega(F^{(2)})=\max_{j}|J_{j}^{(2)}|$. 
\begin{proposition}[RIP$\implies$ZF]
\label{prop:rip->zf}Let $J_{1},J_{2},\dots,J_{\ell}$ with $\bigcup_{j=1}^{\ell}J_{j}=\{1,2,\dots,n\}$
satisfy the sorted running intersection property. Then, $F=\bigcup_{j=1}^{\ell}\clique(J_{j})$
is zero-fill, and we have $\omega(F)=\max_{j}|J_{j}|$.
\end{proposition}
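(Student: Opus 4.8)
The plan is to derive both conclusions from a single structural claim: for every vertex $k\in\{1,2,\dots,n\}$, writing $t(k)\equiv\max\{j:k\in J_{j}\}$ for the index of the last bag containing $k$, one has $\col_{F}(k)\subseteq J_{t(k)}$. First I would note that $\bigcup_{j}J_{j}=\{1,\dots,n\}$ forces $(k,k)\in F$ for every $k$, so that $\col_{F}(k)$ and all the symbolic manipulations below are well-posed. Granting the claim, zero-fill follows by an easy induction on $k$ through Definition~\ref{def:symbchol}: if $E_{k}=F$, then the only candidate fill edges $(i,j)$ at step $k$ have $(i,k),(j,k)\in F$ with $i>j>k$, hence $i,j\in\col_{F}(k)\subseteq J_{t(k)}$ and therefore $(i,j)\in\clique(J_{t(k)})\subseteq F$; so $E_{k+1}=F$, and ultimately $\chol(F)=F$. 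The frontsize identity also follows: the claim gives $|\col_{F}(k)|\le|J_{t(k)}|\le\max_{j}|J_{j}|$ for every $k$, hence $\omega(F)\le\max_{j}|J_{j}|$, while taking $k=\min J_{j}$ shows $J_{j}\subseteq\col_{F}(k)$ (every $i\in J_{j}$ satisfies $i\ge k$ and $(i,k)\in\clique(J_{j})\subseteq F$), giving the reverse inequality $\max_{j}|J_{j}|\le\omega(F)$.

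So the work is concentrated in the structural claim, and the main obstacle is using the \emph{sorted} half of Definition~\ref{def:rip} --- the hypothesis $\min\{J_{\p(j)}\}>\max\{J_{j}\setminus J_{\p(j)}\}$ --- in exactly the right way. The key elementary observation I would isolate first is this: if a vertex $v$ lies in both $J_{j}$ and $J_{\p(j)}$, then $\min J_{\p(j)}\le v$, so the sorted condition yields $\max(J_{j}\setminus J_{\p(j)})<v$; equivalently, every element of $J_{j}$ that is $\ge v$ already lies in $J_{\p(j)}$, i.e. $J_{j}\cap\{w:w\ge v\}\subseteq J_{\p(j)}$.

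To prove $\col_{F}(k)\subseteq J_{t(k)}$, take any $i\in\col_{F}(k)$, so $i\ge k$ and $i,k$ lie in a common bag $J_{a_{0}}$ (if $i=k$ just take $a_{0}=t(k)$; if $i>k$ use $(i,k)\in F$, which means $i,k\in J_{a_{0}}$ for some $a_{0}$, and note $a_{0}\le t(k)$). Then I would walk up the parent pointer, setting $a_{r+1}=\p(a_{r})$ as long as $a_{r}<t(k)$. At each step $k\in J_{a_{r}}$ and $k$ also appears in the strictly later bag $J_{t(k)}$, so the running-intersection condition $J_{\p(a_{r})}\supseteq J_{a_{r}}\cap(J_{a_{r}+1}\cup\cdots\cup J_{\ell})$ gives $k\in J_{a_{r+1}}$, whence the observation above (with $v=k$) gives $J_{a_{r}}\cap\{w:w\ge k\}\subseteq J_{a_{r+1}}$. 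Since $\p$ strictly increases the index and $a_{r+1}\le t(k)$ (because $k\in J_{a_{r+1}}$), the walk must reach $a_{s}=t(k)$ after finitely many steps, and chaining the inclusions gives $i\in J_{a_{0}}\cap\{w:w\ge k\}\subseteq J_{a_{1}}\subseteq\cdots\subseteq J_{a_{s}}=J_{t(k)}$, as claimed.

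Finally I would double-check the two routine points that the argument quietly relies on: that the parent-pointer walk terminates (immediate from $\p(j)>j$ and the upper bound $t(k)$), and that the passage from the fill-free property of $F$ to $\chol(F)=F$ via Definition~\ref{def:symbchol} is valid at every step, not merely for $E_{1}=F$ --- which is exactly the induction on $k$ sketched above. I expect the whole proof to be short once the sorted-RIP observation is stated; the only genuine subtlety is recognizing that one should chase $\p(\cdot)$ \emph{upward} toward $t(k)$, rather than, say, trying to compare two arbitrary bags containing $k$ directly.
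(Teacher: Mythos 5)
Your proposal is correct and follows essentially the same route as the paper: the structural claim $\col_{F}(k)\subseteq J_{t(k)}$ is precisely Lemma~\ref{lem:colrip} (the paper also picks $w=t(k)$), and both the zero-fill and the frontsize conclusions are then read off exactly as you do. The only real difference is cosmetic: where the paper argues by contradiction (``if $i\notin J_{\p(u)}$ then $j\ge\min J_{\p(u)}>\max(J_u\setminus J_{\p(u)})\ge i$''), you extract the clean positive statement $J_{j}\cap\{w:w\ge v\}\subseteq J_{\p(j)}$ whenever $v\in J_{j}\cap J_{\p(j)}$, which is its contrapositive and, in my view, the more transparent way to use the sorted hypothesis. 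One tiny notational slip worth fixing: the displayed chain $J_{a_{0}}\cap\{w\ge k\}\subseteq J_{a_{1}}\subseteq\cdots\subseteq J_{a_{s}}$ is not literally a chain of set inclusions --- what you have is $J_{a_{r}}\cap\{w\ge k\}\subseteq J_{a_{r+1}}$ at each step, so the thing being propagated is membership of $i$ (which works because $i\ge k$ throughout), not containment of the sets themselves.
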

Our proof of \propref{rip->zf} relies on the following result, which
says that every column of $F$ is contain in a subset $J_{w}$. 
\begin{lemma}
\label{lem:colrip}Let $J_{1},J_{2},\dots,J_{\ell}$ with $\bigcup_{j=1}^{\ell}J_{j}=\{1,2,\dots,n\}$
satisfy the sorted running intersection property. For every $j$-th
column in $F=\bigcup_{j=1}^{\ell}\clique(J_{j})$, there exists some
$J_{w}$ such that $\col_{F}(j)\subseteq J_{w}.$
\end{lemma}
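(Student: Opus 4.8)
The plan is to single out, for the given vertex $j$, the bag in which $j$ appears last, and to show that this one bag already contains the entire column $\col_F(j)$. Set $w = \max\{k : j \in J_k\}$; this is well defined because the vertex cover hypothesis $\bigcup_k J_k = \{1,\dots,n\}$ forces $j$ into at least one of the $J_k$, and $j \in J_w$ holds trivially. It then remains only to show that every $i \in \col_F(j)$ with $i > j$ satisfies $i \in J_w$.

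Since $F = \bigcup_k \clique(J_k)$, an edge $(i,j) \in F$ with $i > j$ certifies a single index $k_0$ with $\{i,j\} \subseteq J_{k_0}$, and $k_0 \le w$ because $j \in J_{k_0}$. I would then transport the pair $\{i,j\}$ upward along the parent pointer, forming $k_0, \p(k_0), \p(\p(k_0)), \dots$, while maintaining the invariant $\{i,j\} \subseteq J_{k_t}$ with $k_t \le w$. The crucial step is: if $\{i,j\} \subseteq J_k$ and $k < w$, then $\{i,j\} \subseteq J_{\p(k)}$. For $j$ this is immediate from the running intersection clause, since $j \in J_k \cap (J_{k+1} \cup \cdots \cup J_\ell) \subseteq J_{\p(k)}$, using $j \in J_w$ with $w > k$. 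For $i$ I would argue by contradiction from the \emph{sorted} clause $\min\{J_{\p(k)}\} > \max\{J_k \setminus J_{\p(k)}\}$: if $i \notin J_{\p(k)}$, then $i \in J_k \setminus J_{\p(k)}$, so $i \le \max\{J_k \setminus J_{\p(k)}\} < \min\{J_{\p(k)}\} \le j$, contradicting $i > j$. Since $\p(k) > k$ and $j \in J_{\p(k)}$ give $k < \p(k) \le w$, the indices $k_0 < \p(k_0) < \p(\p(k_0)) < \cdots$ strictly increase while remaining bounded by $w$, so the ascent reaches $w$ in finitely many steps; at that point $\{i,j\} \subseteq J_w$ gives $i \in J_w$.

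I expect the main obstacle to be the transport step for $i$: one must recognize that the \emph{sorted} part of \defref{rip} --- the $\min/\max$ inequality, which is exactly what distinguishes it from the classical running intersection property --- is precisely the feature that keeps $i$ from being ``left behind'' below $j$ when moving to the parent bag, and that the hypothesis $i > j$ (automatic for the strictly-sub-diagonal entries recorded by $\col_F(j)$) is what turns this into a genuine contradiction. The remaining pieces --- well-definedness of $w$, the transport step for $j$, and termination of the ascent, including the observation that it cannot overshoot $w$ because every index it visits still contains $j$ --- are routine.
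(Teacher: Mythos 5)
Your proof is correct and follows essentially the same route as the paper's: pick $w=\max\{k:j\in J_k\}$, find an initial bag $J_{k_0}\ni i,j$, transport the pair up the parent pointers using the running-intersection clause for $j$ and the sorted $\min/\max$ clause (by contradiction against $i>j$) for $i$, and observe that the strictly increasing chain of parents must land exactly on $w$ because $j$ stays in every visited bag. No substantive differences.
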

\begin{proof}
Let $V=\{1,2,\dots,n\}$. For arbitrary $j\in V$, denote $J_{w}$
as the last subset in the sequence $J_{1},J_{2},\dots,J_{\ell}$ for
which $j\in J_{w}$. This choice must exist, because $V=\bigcup_{k=1}^{\ell}J_{k}$.
For every arbitrary $i\in\col_{F}(j)$, we will prove that $i\in J_{w}$
also holds:
\begin{itemize}
\item There exists $u\le w$ for which $i,j\in J_{u}$. Indeed, $(i,j)\in F$
and $F=\bigcup_{k=1}^{\ell}\clique(J_{k})$ imply $(i,j)\in\clique(J_{u})$
for some $J_{u}$, or equivalently $i,j\in J_{u}$. Given that $j\in J_{u}$
and $w=\max\{k:j\in J_{k}\}$ by definition, it follows that $u\le w$.
\item If $u=w$, then $i\in J_{w}$ holds because $i,j\in J_{u}$. Otherwise,
if $u<w$, we use the sorted running intersection property to assert
the following
\begin{gather}
u<w,\qquad i,j\in J_{u}\quad\implies\quad i,j\in J_{\p(u)}.\label{eq:par1}
\end{gather}
The fact that $j\in J_{\p(u)}$ follows directly the running intersection
property $j\in J_{u}\cap J_{w}\subseteq J_{\p(u)}$ for $w>u$. By
contradiction, suppose that $i\notin J_{\p(u)}$. Then, given that
$i\in J_{u}$, it follows from the sorted property that $j\ge\min\{J_{\p(u)}\}>\max\{J_{u}\backslash J_{\p(u)}\}\ge i,$
which contradicts our initial hypothesis that $i>j$. 
\item Inductively reapplying (\ref{eq:par1}), as in $i,j\in J_{\p(\p(u))}$
and $i,j\in J_{\p(\p(\p(u)))}$, we arrive at some $v$ such that
$i,j\in J_{\p(v)}$ and $p(v)=w$. The induction must terminate with
$p(v)\ge w$ because each $\p(u)>u$ by construction. It is impossible
for $\p(v)>w$ to occur, because $j\in J_{\p(v)}$ and $w=\max\{k:j\in J_{k}\}$
by definition. We conclude that $i\in J_{w}$, as desired.
\end{itemize}
\qed
\end{proof}

The equivalence between the sorted running intersection property and
zero-fill sparsity pattern then follow as a short corollary of the
above.

\begin{proof}[Proof of \propref{rip->zf}]To prove that $F$ is zero-fill,
we observe, for arbitrary $(i,k)\in F$ and $(j,k)\in F$ with $i>j>k$
that $i,j\in\col_{F}(k)$. Therefore, it follows from \lemref{colrip}
that there exists $J_{w}$ such that $i,j,k\in J_{w}$. We conclude
that $(i,j)\in F,$ because $F=\bigcup_{j=1}^{\ell}\clique(J_{j})$
and $i,j\in J_{w}$ for some $1\le w\le\ell$. 

To prove that $\omega(F)=\max_{j}|J_{j}|$, we choose $k=\arg\max_{j}|\col_{F}(j)|$.
It follows from Lemma~\ref{lem:colrip} that there exists $J_{w}$
such that $\col_{F}(k)\subseteq J_{w}$, and therefore $\omega(F)=|\col_{F}(k)|\le|J_{w}|$.
Finally, given that $\clique(J_{w})\subseteq F$ it follows that $(i,j)\in F$
holds for all $i\in J_{w}$ where $j=\min J_{w}$. Therefore, we conclude
that $J_{w}\subseteq\col_{F}(j)$, and therefore $|J_{w}|\le|\col_{F}(j)|\le|\col_{F}(k)|=\omega(F)$.
\qed \end{proof}

Finally, we conclude the proof of \lemref{quadlift} by verifying
that a matrix like $\Hess=\sum_{k=1}^{n}\P_{k}\D_{k}\P_{k}^{T}$ does
indeed have $F^{(2)}=\bigcup_{k=1}^{n}\clique(J_{k}^{(2)})$ as its
sparsity pattern.

\begin{proof}[Proof of \lemref{quadlift}]For an arbitrary sparsity
pattern $E$ of order $n$, let $F=\chol(E)$ and $J_{k}\equiv\col_{F}(k)$.
First, it follows from \propref{zf->rip} that $J_{1},J_{2},\dots,J_{n}$
satisfy the ordered running intersection property. Therefore, it follows
from \lemref{lift}, $J_{1}^{(2)},J_{2}^{(2)},\dots,J_{n}^{(2)}$
defined in (\ref{eq:Jk2_def}) satisfy the same property. Finally,
we verify that 
\begin{align*}
\supp(\P_{k})\equiv & \left\{ \alpha:\P_{k}[\alpha,\beta]\ne0\right\} =\left\{ \alpha:\P_{k}^{T}e_{\alpha}\ne0\right\} \\
\overset{\text{(a)}}{=} & \left\{ \idx_{F}(i,j):\P_{k}^{T}\svec_{F}(e_{i}e_{j}^{T})\ne0,\;(i,j)\in F\right\} \\
\overset{\text{(b)}}{=} & \left\{ \idx_{F}(i,j):(e_{i}e_{j}^{T})[J_{k},J_{k}]\ne0,\;i\ge j\right\} =J_{k}^{(2)}.
\end{align*}
Equality (a) is obtained by substituting $e_{\alpha}=\svec_{F}(e_{i}e_{j}^{T})$
for $(i,j)\in F$. Equality (b) follows the identity $\P_{k}^{T}\svec_{F}(Y)=\svec(Y[J_{k},J_{k}])$,
which was used to define $\P_{k}$. Therefore, $F^{(2)}=\bigcup_{k=1}^{n}J_{k}^{(2)}$
and we conclude via \propref{rip->zf} that $\omega(F^{(2)})=\max_{k}|J_{k}^{(2)}|=\max_{k}\frac{1}{2}|J_{k}|(|J_{k}|+1)$,
and we recall that $\max_{k}|J_{k}|=\omega(F)=\omega(E)$ by definition.
\qed \end{proof}

\subsection{Sparsity overestimate (Proof of \lemref{ub})}

We will need some additional notation. For $V=\{1,2,\dots,n\}$ and
$J\subseteq V$, we denote the subset of $J$ induced by the elements
in $U=\{u_{1},u_{2},\dots,u_{p}\}\subseteq V$ as follows
\[
J[U]\equiv\{i:u_{i}\in J\}\text{ where }u_{1}<u_{2}<\cdots<u_{p}.
\]
Our definition is made so that if $E=\clique(J)$, then $E[U]=\clique(J[U])$.
Our desired claim, namely that 
\begin{align*}
E^{(2)} & =\left(\bigcup_{i=1}^{m}\clique(\supp(a_{i}))\right)\cup\left(\bigcup_{j=1}^{n}\clique(\supp(\P_{j}))\right)\\
 & \subseteq\left(\bigcup_{k=1}^{n}\clique(\supp(\o{\P}_{k})[V^{(2)}])\right)=\o F^{(2)}[V^{(2)}]
\end{align*}
where $V^{(2)}=\{\idx_{\overline{F}}(i,j):i,j\in F\}$, now follows
immediately from the following two lemmas.
\begin{lemma}
For every $i\in\{1,2,\dots,m\}$, there exists $k\in\{1,2,\dots,n\}$
such that $\supp(a_{i})\subseteq\supp(\o{\P}_{k})[V^{(2)}]$ where
$V^{(2)}=\{\idx_{\overline{F}}(i,j):i,j\in F\}$. 
\end{lemma}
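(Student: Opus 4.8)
The plan is to reduce the claim to the elementary fact that, in the chordal sparsity pattern $\o F=\chol(\o E)$, the clique $\supp(A_i)$ lies inside the column set of its lowest-numbered vertex. I would begin by unwinding the two supports in the statement. Since $A_i$ is supported on $\spar(A_i)\subseteq E\subseteq F$ (and $F$ contains its diagonal), the definition of $\svec_F$ and $\idx_F$ gives $\supp(a_i)=\{\idx_F(s,t):(s,t)\in\spar(A_i)\}$; writing each such pair with $s\ge t$, both endpoints lie in $\supp(A_i)$, so $\spar(A_i)\subseteq\clique(A_i)=\clique(\supp(A_i))$. On the other side, exactly the computation in the proof of \lemref{quadlift} applied to $\o F$ in place of $F$ yields $\supp(\o{\P}_{k})=\{\idx_{\o F}(s,t):s,t\in\o{J}_{k},\ s\ge t\}$, where $\o{J}_{k}\equiv\col_{\o F}(k)$.

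The technical heart is to rewrite $\supp(\o{\P}_{k})[V^{(2)}]$ in $\idx_F$-coordinates, where $V^{(2)}=\{\idx_{\o F}(s,t):(s,t)\in F\}$ is well-defined because $F\subseteq\o F$ (monotonicity of $\chol(\cdot)$ under $E\subseteq\o E$, as in \propref{del}). Here I would invoke the Raster ordering lemma: both $\idx_F$ and $\idx_{\o F}$ are strictly increasing with respect to the raster order $\prec$ on index pairs, and since each is a bijection onto an initial segment of $\N$, each is the order isomorphism of its pattern onto that segment. Consequently the restriction of $\idx_{\o F}$ to $F$ is the order isomorphism $(F,\prec)\to(V^{(2)},<)$, so the rank of $\idx_{\o F}(s,t)$ within the sorted set $V^{(2)}$ is exactly $\idx_F(s,t)$. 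Combining this with the description of $\supp(\o{\P}_{k})$ above, and using that $\idx_{\o F}$ is injective on $\o F$ (noting $\o{J}_{k}$ is a clique of the zero-fill pattern $\o F$), I would conclude $\supp(\o{\P}_{k})[V^{(2)}]=\{\idx_F(s,t):(s,t)\in F,\ s,t\in\o{J}_{k},\ s\ge t\}$.

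It then suffices to exhibit, for each $i$, an index $k$ with $\supp(A_i)\subseteq\o{J}_{k}=\col_{\o F}(k)$: indeed, for every $(s,t)\in\spar(A_i)$ we have $(s,t)\in F$ and $s,t\in\supp(A_i)\subseteq\o{J}_{k}$, hence $\idx_F(s,t)\in\supp(\o{\P}_{k})[V^{(2)}]$ by the previous paragraph, giving $\supp(a_i)\subseteq\supp(\o{\P}_{k})[V^{(2)}]$. For this last point I would use that $\clique(A_i)=\clique(\supp(A_i))\subseteq\o E\subseteq\o F$, so $\supp(A_i)$ is a clique of the graph $(V,\o F)$; choosing $k=\min\supp(A_i)$ (and $k=1$ in the degenerate case $A_i=0$, where $\supp(a_i)=\emptyset$), every other $s\in\supp(A_i)$ satisfies $s>k$ and $(s,k)\in\o F$, so $s\in\col_{\o F}(k)$, while $k\in\col_{\o F}(k)$ trivially; hence $\supp(A_i)\subseteq\col_{\o F}(k)$.

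I expect the main obstacle to be the middle step: verifying precisely that relabeling $\supp(\o{\P}_{k})$ by position within $V^{(2)}$ reproduces $\idx_F$. This is where one must carefully identify the single total order shared by $\idx_F$ and $\idx_{\o F}$ (the raster order) and keep straight the two different $[\cdot]$-restriction conventions in play — one acting on index pairs inside $\clique(\cdot)$, the other acting on scalar indices inside $V^{(2)}$. The remaining ingredients — the unwinding of $\supp(a_i)$ and the observation that a clique is contained in the column set of its lowest-numbered vertex — are routine.
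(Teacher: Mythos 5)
Your proposal is correct and follows essentially the same route as the paper: unwind $\supp(a_i)$ via $\idx_F$, identify $\supp(\o{\P}_k)$ via the computation in the proof of \lemref{quadlift}, relabel it by position within $V^{(2)}$ to land back in $\idx_F$-coordinates, and then take $k=\min\supp(A_i)$ so that $\clique(A_i)\subseteq\o F$ forces $\supp(A_i)\subseteq\col_{\o F}(k)$. The one place you add genuine value is the middle step: the paper simply asserts $\supp(\o{\P}_{k})[V^{(2)}]=\{\idx_{F}(i,j):i,j\in\col_{\o F}(k),\ (i,j)\in F,\ i\ge j\}$ with the phrase ``repeating the proof of \lemref{quadlift},'' whereas you correctly identify that this relabeling hinges on $\idx_F$ and $\idx_{\o F}$ being order isomorphisms for the same raster order (so that the rank of $\idx_{\o F}(s,t)$ within $V^{(2)}$ is exactly $\idx_F(s,t)$) and spell out that argument; this is exactly the missing justification and it is sound.
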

\begin{proof}
It follows by repeating the proof of \lemref{quadlift} that
\begin{gather*}
\supp(\o{\P}_{k})=\{\idx_{\o F}(i,j):i,j\in\col_{\o F}(k),i\ge j\},\\
\supp(\o{\P}_{k})[V^{(2)}]=\{\idx_{F}(i,j):i,j\in\col_{\o F}(k),i\ge j\}.
\end{gather*}
Our desired claim follows via the following sequence of inclusions
\begin{align*}
\supp(a_{i}) & =\{v:a_{i}[v]\ne0\}=\{\idx_{F}(u,v):A_{i}[u,v]\ne0,\quad(u,v)\in F\}\\
 & \overset{\text{(a)}}{\subseteq}\{\idx_{F}(u,v):u,v\in\supp(A_{i}),\quad(u,v)\in F\}\\
 & \overset{\text{(b)}}{\subseteq}\{\idx_{F}(u,v):u,v\in\col_{\o F}(k),\quad(u,v)\in F\}=\supp(\o{\P}_{k})[V^{(2)}].
\end{align*}
Inclusion (a) is because $\spar(A_{i})\subseteq\clique(\supp(A_{i}))$.
Inclusion (b) is true via the following: If $J=\supp(A_{i})$ satisfies
$\clique(J)\subseteq\o F$, then $J\subseteq\col_{\o F}(k)$ where
$k=\min J$. Indeed, we have $k\in\col_{\o F}(k)$ by definition.
For any arbitrary $j\in J$ with $j>k$, we must have an edge $(j,k)\in\clique(J)\subseteq\o F$,
and therefore $j\in\col_{\o F}(k)$. \qed
\end{proof}

\begin{lemma}
For every $k\in\{1,2,\dots,n\}$, we have $\supp(\P_{k})\subseteq\supp(\o{\P}_{k})[V^{(2)}]$
where $V^{(2)}=\{\idx_{\overline{F}}(i,j):i,j\in F\}$. 
\end{lemma}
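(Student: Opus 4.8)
The plan is to reduce the claim to the inclusion of column sets $\col_F(k)\subseteq\col_{\o F}(k)$, once the relabeling operator $[V^{(2)}]$ has been unwound. First I would note that $E\subseteq\o E$ by construction in (\ref{eq:Ebardef}), so \propref{del} gives $F=\chol(E)\subseteq\chol(\o E)=\o F$, and hence $\col_F(k)=\{k\}\cup\{i>k:(i,k)\in F\}\subseteq\col_{\o F}(k)$ for every $k$. Repeating the support computation carried out in the proof of \lemref{quadlift} verbatim for both operators yields
\[
\supp(\P_k)=\{\idx_F(i,j):i,j\in\col_F(k),\ i\ge j\},\qquad \supp(\o{\P}_k)=\{\idx_{\o F}(i,j):i,j\in\col_{\o F}(k),\ i\ge j\},
\]
where in each display the pair $(i,j)$ automatically lies in the relevant zero-fill pattern because $\col_F(k)$ and $\col_{\o F}(k)$ each induce a clique.

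Next I would unpack $\supp(\o{\P}_k)[V^{(2)}]$ with $V^{(2)}=\{\idx_{\o F}(i,j):(i,j)\in F\}$. Writing the elements of $V^{(2)}$ in increasing order as $v_1<v_2<\cdots<v_{|F|}$, the crucial point is that $\idx_F$ and $\idx_{\o F}$ induce the \emph{same} total order on the shared edge set $F$: by the Raster ordering lemma the relative position of $(i,j),(i',j')\in F$ under either operator is decided by lexicographically comparing $(j,i)$ against $(j',i')$, which makes no reference to the ambient pattern. Since $\idx_F$ enumerates $F$ in exactly this raster order, it follows that $v_t=\idx_{\o F}(i,j)$ precisely when $\idx_F(i,j)=t$. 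Combined with the injectivity of $\idx_{\o F}$ on $\o F\supseteq F$, this gives
\[
\supp(\o{\P}_k)[V^{(2)}]=\{t:v_t\in\supp(\o{\P}_k)\}=\{\idx_F(i,j):(i,j)\in F,\ i,j\in\col_{\o F}(k),\ i\ge j\}.
\]
Finally, every $\idx_F(i,j)\in\supp(\P_k)$ has $i,j\in\col_F(k)\subseteq\col_{\o F}(k)$ with $i\ge j$ and $(i,j)\in F$, hence lies in $\supp(\o{\P}_k)[V^{(2)}]$, which is the desired inclusion.

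The main obstacle is the middle step: making airtight the identification between a position $t$ inside the sorted set $V^{(2)}$ and the value $\idx_F(i,j)$ of the corresponding edge. This is exactly where the raster-ordering property of $\idx_F(\cdot,\cdot)$, emphasized earlier in the paper, becomes indispensable --- for a general bijective vectorization, $\idx_{\o F}$ restricted to $F$ need not be order-isomorphic to $\idx_F$, and the relabeling identity above would break down. Everything else is routine bookkeeping of clique structure inherited through $F\subseteq\o F$.
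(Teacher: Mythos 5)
Your argument is correct and follows essentially the same route as the paper: derive $\col_F(k)\subseteq\col_{\o F}(k)$ from $F\subseteq\o F$ via subgraph monotonicity, compute $\supp(\P_k)$ and $\supp(\o{\P}_k)$ by repeating the support calculation from Lemma 3, and observe that the relabeling $[V^{(2)}]$ converts $\idx_{\o F}(i,j)$ into $\idx_F(i,j)$ for $(i,j)\in F$. The one thing you do more carefully than the paper is to justify that last relabeling identity via the raster-ordering lemma (order-isomorphism of $\idx_F$ and $\idx_{\o F}\vert_F$), a step the paper leaves implicit; this is a genuine improvement in rigor rather than a different proof.
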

\begin{proof}
It again follows by repeating the proof of \lemref{quadlift} that
\begin{align*}
\supp(\P_{k}) & =\{\idx_{F}(i,j):i,j\in\col_{F}(k),i\ge j\}\\
 & \overset{\text{(a)}}{\subseteq}\{\idx_{F}(i,j):i,j\in\col_{\o F}(k),i\ge j\}=\supp(\o{\P}_{k})[V^{(2)}].
\end{align*}
Inclusion (a) is true because $E\subseteq\o E$ implies $F=\chol(E)\subseteq\chol(\o E)=\o F$
via \propref{del}, and therefore $\col_{F}(k)\subseteq\col_{\o F}(k)$.
\qed
\end{proof}

\section{\label{sec:exp}Large-scale numerical experiments}

Our goal in this section is to provide experimental evidence to justify
the following four empirical claims made in the paper:
\begin{enumerate}
\item Whenever a graph has small treewidth $\tau_{\star}=O(1)$, a fill-reducing
heuristic is also able to find a ``good enough'' tree decomposition
with width $\tau=O(1)$.
\item A primal-dual interior-point method consistently converges to high
accuracies of $\epsilon\approx10^{-6}$ in just tens of iterations,
at an essentially dimension-free rate.
\item In theory, clique-tree conversion (\ref{eq:ctc}) enjoys similar guarantees
to chordal conversion (\ref{eq:cc}). But in practice, (\ref{eq:cc})
is \emph{much} faster than (\ref{eq:ctc}), both in solution time
and in preprocessing time.
\item Real-world power systems $\G$ yield instances of the AC optimal power
flow relaxation (\exaref{acopf}) with small values of $\tw(\o G)=2\cdot\tw(\G^{2})$.
\end{enumerate}
To this end, we benchmark the following three conversion methods: 
\begin{itemize}
\item \textbf{CC:} Chordal conversion as outlined in this paper in \algref{myalg},
implemented in MATLAB, with MOSEK~\citep{mosek2019} as the general-purpose
solver. If $G=\o G$ has small treewidth, then CC is guaranteed to
use at most $O(m+n)$ time per-iteration via \thmref{total}.
\item \textbf{Dual CTC (heuristic):} The dualized variant of clique-tree
conversion of \citet{zhang2020sparse} based on the aggregate sparsity
graph $G$. We take MATLAB / MOSEK implementation directly from the
project website\footnote{\href{https://github.com/ryz-codes/dual_ctc}{https://github.com/ryz-codes/dual\_ctc}}.
If $G=\o G$ has small treewidth, then this variant is guaranteed
to use at most $O(m+n)$ time per-iteration via \citep[Theorem~1]{zhang2020sparse}.
When $G\ne\o G$, this variant reduces to an empirical heuristic. 
\item \textbf{Dual CTC (provable):} The dualized variant of clique-tree
conversion of \citet{zhang2020sparse}, but forced to use the extended
sparsity graph $\o G$ instead of $G$, as suggested by \citet{gu2022faster}.
It is implemented by padding the elements of the sparse cost matrix
$C$ with numerically-zero elements that are structurally nonzero,
so that it is recognized as an element of $\S_{\o E}^{n}$, and then
calling Dual CTC (heuristic). If $\o G$ has small treewidth, then
this variant is guaranteed to use at most $O(m+n)$ time per-iteration
via \citep[Theorem~1]{zhang2020sparse}.
\end{itemize}
All of our experiments were conducted on a modest workstation, with
a Xeon 3.3 GHz quad-core CPU and 32 GB of RAM. Our code was written
in MATLAB 9.8.0.1323502 (R2020a), and the general-purpose solver we
use is MOSEK v9.1.10~\citep{mosek2019}. MOSEK specifies default
parameters $\epsilon=10^{-8}$ and seek to terminate with $\|\A x-\b\|_{\infty}\le\epsilon(1+\|\b\|_{\infty})$
and $\|\A^{T}y+s-\c\|_{\infty}\le\epsilon(1+\|\c\|_{\infty})$ and
$\max\{x^{T}s,\c^{T}x-\b^{T}y\}\le\epsilon\max\{1,|\b^{T}x|,|\c^{T}x|\}$~\citep[Section 13.3.2]{mosek2019}.
If MOSEK is unable to achieve this accuracy due to numerical issues,
it gives up and accepts the solution as optimal if $\epsilon=10^{-5}$~\citep[Section 13.3.3]{mosek2019}.
Our calculation for the number of accurate digits is identical to
\citep[Section~8]{zhang2020sparse}, which was in turn adapted from
the DIMAC challenge~\citep{pataki2002dimacs}.

\subsection{\label{subsec:theta}Lovász theta problem on synthetic dataset}

\begin{table}
\caption{\label{tab:lovasz}Tree decomposition quality, accuracy (in decimal
digits) and timing (in seconds) for Lovász theta problems on partial
$k$-trees: $|\protect\V|$ - number of vertices; $|\protect\E|$
- number of edges; $\tau$ - width of tree decomposition used; \textquotedblleft prep\textquotedblright{}
- preprocessing time, which includes the conversion process and MOSEK's
internal preparation time; \textquotedblleft digit\textquotedblright{}
- accurate decimal digits; \textquotedblleft iter\textquotedblright{}
- number of interior-point iterations; \textquotedblleft per-it\textquotedblright{}
- average time per interior-point iteration.}

\resizebox{\textwidth}{!}{

\begin{tabular}{|c|c|ccccc|ccccc|ccccc|}
\hline 
 &  & \multicolumn{5}{c|}{Dual CTC ($k$-tree ordering)} & \multicolumn{5}{c|}{CC (amd ordering)} & \multicolumn{5}{c|}{CC ($k$-tree ordering)}\tabularnewline
\cline{3-17} \cline{4-17} \cline{5-17} \cline{6-17} \cline{7-17} \cline{8-17} \cline{9-17} \cline{10-17} \cline{11-17} \cline{12-17} \cline{13-17} \cline{14-17} \cline{15-17} \cline{16-17} \cline{17-17} 
$|\V|$ & $|\E|$ & $\tau$ & prep & digit & iter & per-it & $\tau$ & prep & digit & iter & per-it & $\tau$ & prep & digit & iter & per-it\tabularnewline
\hline 
100 & 126 & 16 & 0.05 & 7.0 & 8 & 0.04 & 9 & 0.02 & 6.8 & 12 & 0.01 & 16 & 0.15 & 7.9 & 10 & 0.01\tabularnewline
200 & 262 & 23 & 0.10 & 8.3 & 9 & 0.06 & 16 & 0.01 & 7.0 & 10 & 0.01 & 23 & 0.03 & 7.9 & 10 & 0.03\tabularnewline
500 & 684 & 33 & 0.28 & 8.8 & 9 & 0.12 & 30 & 0.03 & 6.7 & 9 & 0.06 & 33 & 0.07 & 7.1 & 7 & 0.10\tabularnewline
1,000 & 1,492 & 35 & 0.53 & 9.1 & 9 & 0.17 & 33 & 0.05 & 6.6 & 8 & 0.12 & 35 & 0.05 & 8.7 & 8 & 0.14\tabularnewline
2,000 & 3,004 & 35 & 0.85 & 7.3 & 11 & 0.14 & 35 & 0.09 & 6.5 & 8 & 0.15 & 35 & 0.09 & 6.8 & 8 & 0.15\tabularnewline
5,000 & 7,441 & 35 & 2.64 & 8.6 & 11 & 0.63 & 40 & 0.23 & 7.5 & 9 & 0.37 & 35 & 0.24 & 6.2 & 8 & 0.35\tabularnewline
10,000 & 14,977 & 35 & 6.73 & 7.3 & 15 & 1.60 & 36 & 0.48 & 6.7 & 8 & 0.81 & 35 & 0.47 & 8.0 & 9 & 0.67\tabularnewline
20,000 & 30,032 & 35 & 22.21 & 7.6 & 13 & 3.41 & 42 & 1.01 & 7.3 & 8 & 1.48 & 35 & 1.04 & 7.9 & 9 & 1.31\tabularnewline
50,000 & 75,042 & 35 & 114.21 & 3.8 & 20 & 9.13 & 44 & 2.91 & 5.1 & 8 & 3.81 & 35 & 2.79 & 6.7 & 10 & 3.27\tabularnewline
100,000 & 150,298 & 35 & 415.89 & 4.5 & 27 & 18.57 & 40 & 5.98 & 7.8 & 9 & 8.20 & 35 & 5.93 & 5.5 & 10 & 6.65\tabularnewline
200,000 & 299,407 & 35 & 1966.70 & 3.8 & 16 & 36.89 & 42 & 11.90 & 6.1 & 15 & 27.65 & 35 & 12.06 & 6.3 & 11 & 12.92\tabularnewline
500,000 & 749,285 & 35 & 12265.85 & 3.3 & 16 & 102.56 & 46 & 31.80 & 2.8 & 18 & 103.85 & 35 & 41.41 & 5.6 & 16 & 39.20\tabularnewline
1,000,000 & 1,500,873 & 35 & - & - & - & - & 46 & 65.39 & 4.4 & 18 & 414.80 & 35 & 67.05 & 3.3 & 17 & 97.57\tabularnewline
\hline 
\end{tabular}

}
\end{table}
\begin{figure}
\subfloat[]{\includegraphics[width=0.33\columnwidth]{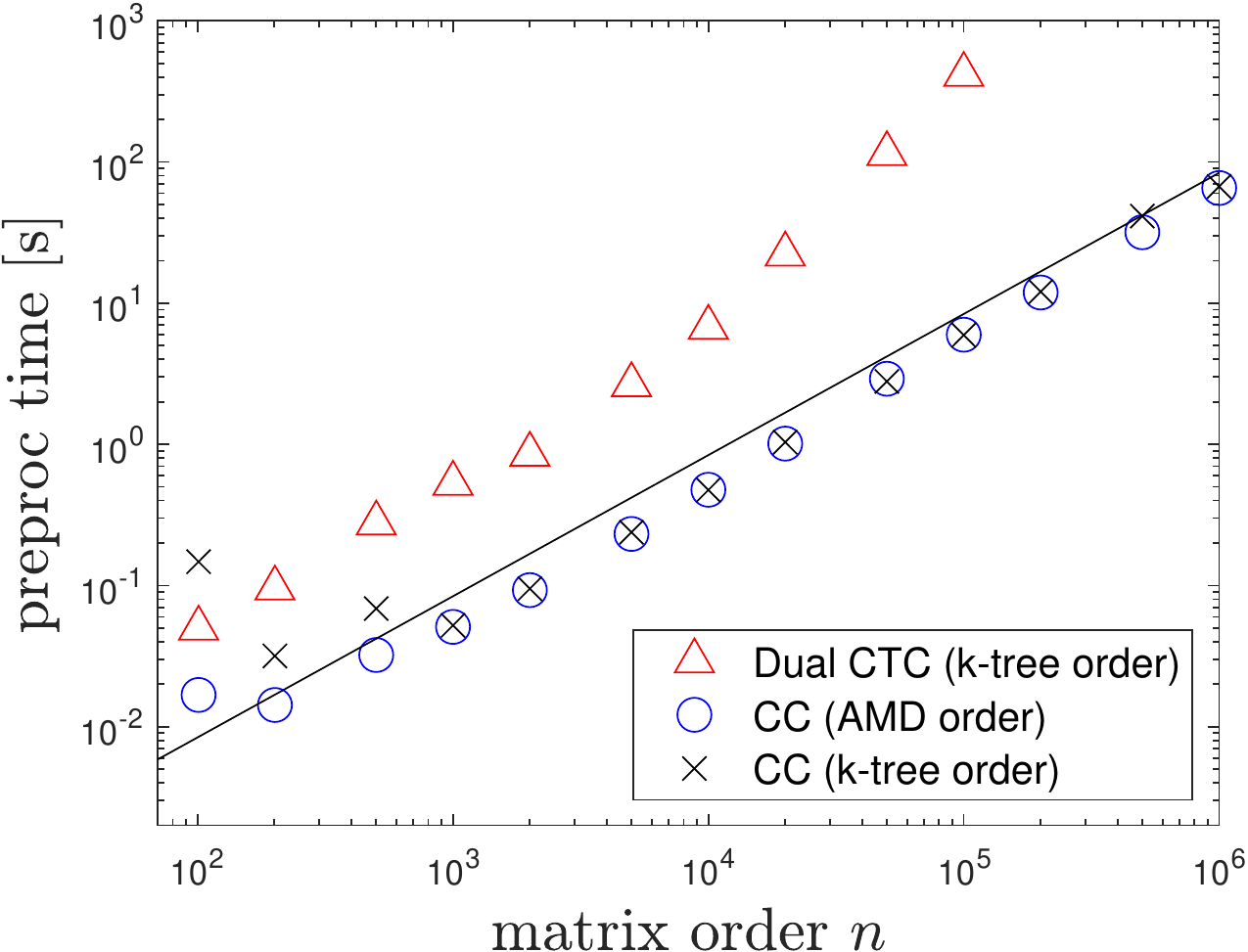}

}\subfloat[]{\includegraphics[width=0.33\columnwidth]{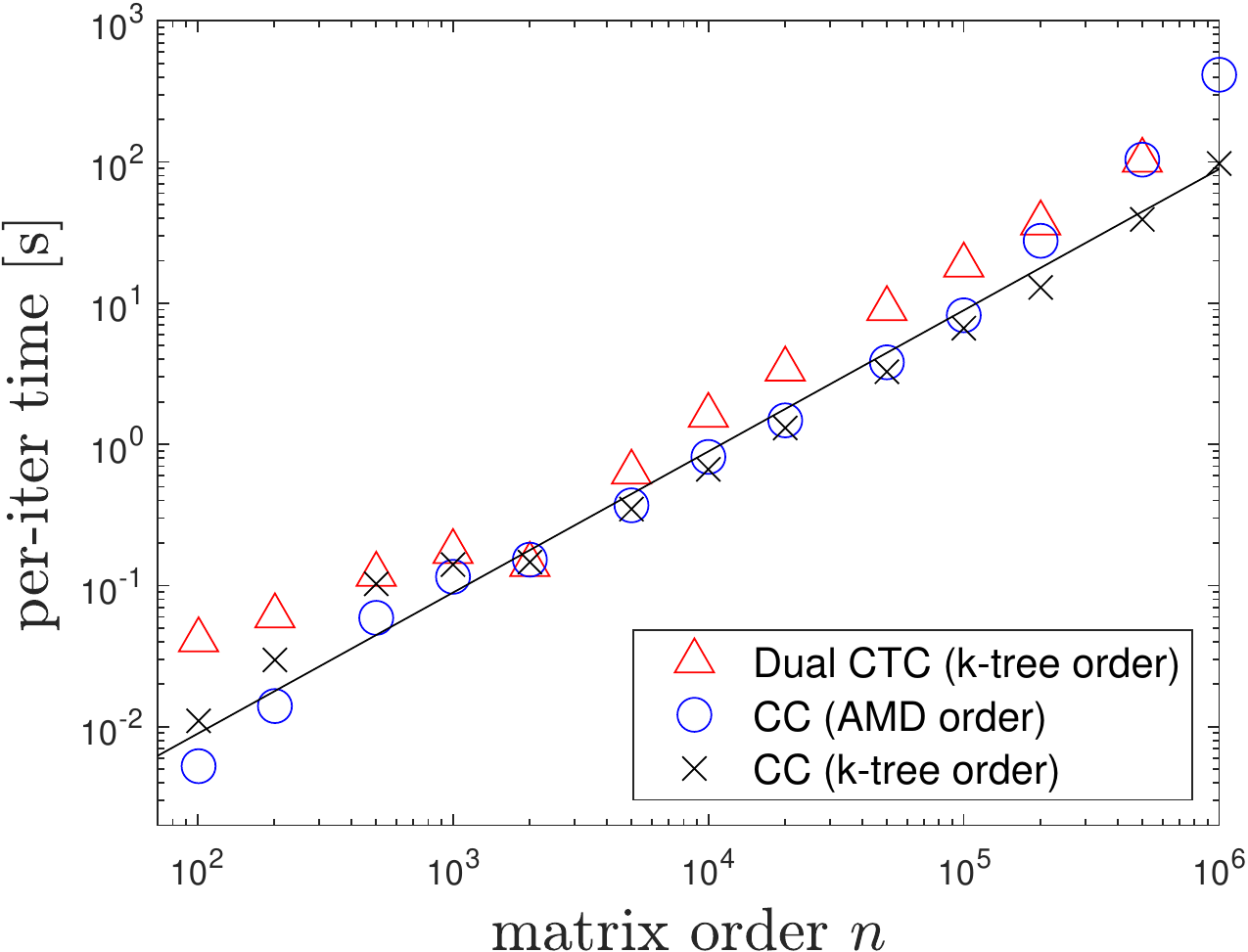}

}\subfloat[]{\includegraphics[width=0.33\columnwidth]{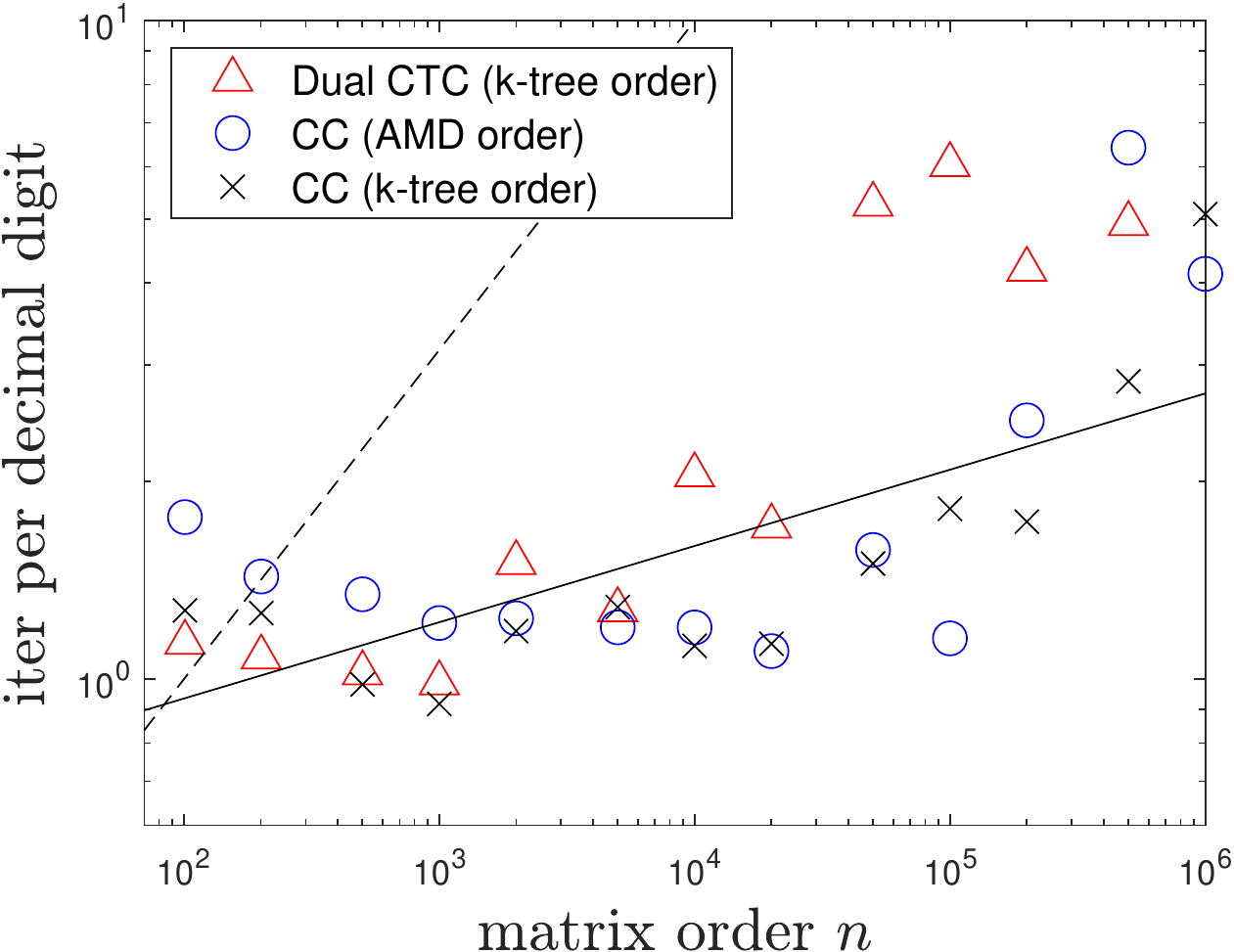}

}

\caption{\label{fig:theta_cc}Lovász theta problems solved via chordal conversion:
(a) Preprocessing time, with regression $p_{\times}(n)=8.385n\times10^{-5}$
and $R^{2}=0.87$; (b) Time per iteration, with regression $f_{\times}(n)=8.9n\times10^{-5}$
and $R^{2}=0.98$; (c) Iterations per decimal digit of accuracy, with
(solid) regression $g(n)=0.548n^{0.115}$ and $R^{2}=0.45$ and (dashed)
bound $g(n)=0.1\sqrt{n}$.}
\end{figure}

Our first set of experiments is on the Lovász theta problem (\exaref{theta}),
for which $G=\o G$ always holds with equality. For each trial, we
set $\G=(\V,\E)$ by randomly generating a $k$-tree with $k=35$
(see \citep[p.~9]{vandenberghe2015chordal} for details) and then
deleting edges uniformly at random until $|\E|/|\V|\approx3/2$. The
resulting $\G$ should have treewidth exactly $\tau_{\star}=35$ in
the limit $|\V|\to\infty$; the optimal ordering $\Pi$ to yield $\omega(E_{\Pi})=36$
is simply any perfect elimination ordering on the $k$-tree (``$k$-tree
ordering'' in \tabref{lovasz}). We observe that the \texttt{amd}
heuristic in MATLAB~\citep{amestoy2004algorithm} finds high-quality
orderings to yield $\omega(E_{\Pi})\le47$, corresponding to tree
decompositions of width $\tau\le46$, which is only about 30\% worse
than the best possible (``amd ordering'' in \tabref{lovasz}). Nevertheless,
minor differences in $\tau$ can still manifest as larger differences
in per-iteration time. 

We solve the problem on $\G$ using CC and CTC, and observe that in
both cases, it takes around 10 iterations to achieve $\epsilon\approx10^{-6}$
across a wide range of $n$, until numerical issues at very large
scales $n\approx10^{5}$ forced more iterations to be taken (see \figref{theta_cc}c
and \tabref{lovasz}). We find that both CC and CTC achieve comparable
$O(m+n)$ runtime per-iteration (see \figref{theta_cc}b), but CC
is significantly faster in its preprocessing time (see \figref{theta_cc}a).
As shown in the last few rows of \tabref{lovasz}, CC solved an instance
of the Lovász theta problem on a graph with $10^{6}$ vertices and
$1.5\times10^{6}$ edges in less than 30 minutes, taking a little
over 1 minute in the preprocessing. In contrast, CTC for a graph of
half this size took 3.5 hours just to perform the preprocessing.

To test the zero fill-in prediction in \thmref{front}, our implementation
of CC in this section forces MOSEK to factor its Schur complement
matrix $\Hess=\A\nabla^{2}f(w)\A^{T}$ \emph{without} the use of a
fill-reducing ordering, by setting the flag \texttt{MSK\_IPAR\_INTPNT\_ORDER\_METHOD}
to \texttt{'MSK\_ORDER\_METHOD\_NONE'}. If \thmref{front} is incorrect,
then factoring $\L=\chol(\Hess)$ would catastrophic dense fill-in,
and the per-iteration runtime would not be $O(m+n)$ as shown in \figref{theta_cc}b. 

\subsection{\label{subsec:acopf}AC optimal power flow relaxation on real-world
dataset}

\begin{table}
\caption{\label{tab:treedecomp}Treewidth bounds for the 72 power system test
cases in the MATPOWER dataset: $|V|$ - number of vertices; $|E|$
- number of edges; $\protect\lb,\protect\ub$ - lower-bound and upper-bounds
on $\protect\tw(\protect\G)$; $\protect\lb_{2},\protect\ub_{2}$
- lower-bound and upper-bounds on $\protect\tw(\protect\G^{2})$.}

\setlength\tabcolsep{3pt}
\resizebox{\textwidth}{!}{

\begin{tabular}{|c|ccccccc|c|ccccccc|}
\hline 
\# & Name & $|V|$ & $|E|$ & $\lb$ & $\ub$ & $\lb_{2}$ & $\ub_{2}$ & \# & Name & $|V|$ & $|E|$ & $\lb$ & $\ub$ & $\lb_{2}$ & $\ub_{2}$\tabularnewline
\hline 
1 & case4\_dist & 4 & 3 & 1 & 1 & 2 & 2 & 37 & case85 & 85 & 84 & 1 & 1 & 4 & 4\tabularnewline
2 & case4gs & 4 & 4 & 2 & 2 & 3 & 3 & 38 & case89pegase & 89 & 206 & 8 & 11 & 17 & 27\tabularnewline
3 & case5 & 5 & 6 & 2 & 2 & 4 & 4 & 39 & case94pi & 94 & 93 & 1 & 1 & 4 & 4\tabularnewline
4 & case6ww & 6 & 11 & 3 & 3 & 5 & 5 & 40 & case118zh & 118 & 117 & 1 & 1 & 4 & 4\tabularnewline
5 & case9target & 9 & 9 & 2 & 2 & 4 & 4 & 41 & case118 & 118 & 179 & 4 & 4 & 9 & 12\tabularnewline
6 & case9Q & 9 & 9 & 2 & 2 & 4 & 4 & 42 & case136ma & 136 & 135 & 1 & 1 & 8 & 8\tabularnewline
7 & case9 & 9 & 9 & 2 & 2 & 4 & 4 & 43 & case141 & 141 & 140 & 1 & 1 & 4 & 4\tabularnewline
8 & case10ba & 10 & 9 & 1 & 1 & 2 & 2 & 44 & case145 & 145 & 422 & 7 & 10 & 21 & 33\tabularnewline
9 & case12da & 12 & 11 & 1 & 1 & 2 & 2 & 45 & case\_ACTIVSg200 & 200 & 245 & 4 & 8 & 11 & 18\tabularnewline
10 & case14 & 14 & 20 & 2 & 2 & 6 & 6 & 46 & case300 & 300 & 409 & 3 & 6 & 11 & 17\tabularnewline
11 & case15nbr & 15 & 14 & 1 & 1 & 4 & 4 & 47 & case\_ACTIVSg500 & 500 & 584 & 4 & 8 & 14 & 22\tabularnewline
12 & case15da & 15 & 14 & 1 & 1 & 4 & 4 & 48 & case1354pegase & 1354 & 1710 & 5 & 12 & 13 & 30\tabularnewline
13 & case16am & 15 & 14 & 1 & 1 & 4 & 4 & 49 & case1888rte & 1888 & 2308 & 4 & 12 & 14 & 38\tabularnewline
14 & case16ci & 16 & 13 & 1 & 1 & 3 & 3 & 50 & case1951rte & 1951 & 2375 & 5 & 12 & 15 & 43\tabularnewline
15 & case17me & 17 & 16 & 1 & 1 & 3 & 3 & 51 & case\_ACTIVSg2000 & 2000 & 2667 & 6 & 40 & 16 & 85\tabularnewline
16 & case18nbr & 18 & 17 & 1 & 1 & 4 & 4 & 52 & case2383wp & 2383 & 2886 & 5 & 23 & 9 & 51\tabularnewline
17 & case18 & 18 & 17 & 1 & 1 & 3 & 3 & 53 & case2736sp & 2736 & 3263 & 5 & 23 & 9 & 55\tabularnewline
18 & case22 & 22 & 21 & 1 & 1 & 3 & 3 & 54 & case2737sop & 2737 & 3263 & 5 & 23 & 9 & 57\tabularnewline
19 & case24\_ieee\_rts & 24 & 34 & 3 & 4 & 7 & 8 & 55 & case2746wop & 2746 & 3299 & 5 & 23 & 9 & 53\tabularnewline
20 & case28da & 28 & 27 & 1 & 1 & 3 & 3 & 56 & case2746wp & 2746 & 3273 & 5 & 24 & 9 & 58\tabularnewline
21 & case30pwl & 30 & 41 & 3 & 3 & 7 & 9 & 57 & case2848rte & 2848 & 3442 & 5 & 18 & 14 & 41\tabularnewline
22 & case30Q & 30 & 41 & 3 & 3 & 7 & 9 & 58 & case2868rte & 2868 & 3471 & 5 & 17 & 16 & 43\tabularnewline
23 & case30 & 30 & 41 & 3 & 3 & 7 & 9 & 59 & case2869pegase & 2869 & 3968 & 9 & 12 & 17 & 42\tabularnewline
24 & case\_ieee30 & 30 & 41 & 3 & 3 & 7 & 9 & 60 & case3012wp & 3012 & 3566 & 5 & 25 & 10 & 55\tabularnewline
25 & case33bw & 33 & 32 & 1 & 1 & 3 & 3 & 61 & case3120sp & 3120 & 3684 & 5 & 28 & 9 & 60\tabularnewline
26 & case33mg & 33 & 32 & 1 & 1 & 3 & 3 & 62 & case3375wp & 3374 & 4068 & 6 & 27 & 12 & 64\tabularnewline
27 & case34sa & 34 & 33 & 1 & 1 & 3 & 3 & 63 & case6468rte & 6468 & 8065 & 5 & 26 & 15 & 65\tabularnewline
28 & case38si & 38 & 37 & 1 & 1 & 3 & 3 & 64 & case6470rte & 6470 & 8066 & 5 & 26 & 15 & 64\tabularnewline
29 & case39 & 39 & 46 & 3 & 3 & 5 & 7 & 65 & case6495rte & 6495 & 8084 & 5 & 26 & 15 & 62\tabularnewline
30 & case51ga & 51 & 50 & 1 & 1 & 3 & 3 & 66 & case6515rte & 6515 & 8104 & 5 & 26 & 16 & 62\tabularnewline
31 & case51he & 51 & 50 & 1 & 1 & 3 & 3 & 67 & case9241pegase & 9241 & 14207 & 21 & 33 & 42 & 78\tabularnewline
32 & case57 & 57 & 78 & 3 & 5 & 6 & 12 & 68 & case\_ACTIVSg10k & 10000 & 12217 & 5 & 33 & 17 & 80\tabularnewline
33 & case69 & 69 & 68 & 1 & 1 & 4 & 4 & 69 & case13659pegase & 13659 & 18625 & 21 & 31 & 42 & 80\tabularnewline
34 & case70da & 70 & 68 & 1 & 1 & 3 & 3 & 70 & case\_ACTIVSg25k & 25000 & 30110 & 6 & 51 & 17 & 127\tabularnewline
35 & case\_RTS\_GMLC & 73 & 108 & 4 & 5 & 7 & 11 & 71 & case\_ACTIVSg70k & 70000 & 83318 & 6 & 88 & 16 & 232\tabularnewline
36 & case74ds & 74 & 73 & 1 & 1 & 3 & 3 & 72 & case\_SyntheticUSA & 82000 & 98203 & 6 & 90 & 17 & 242\tabularnewline
\hline 
\end{tabular}

}
\end{table}
\begin{table}
\caption{\label{tab:opf}Accuracy (in decimal digits) and timing (in seconds)
for 25 largest OPF problems: $n$ - order of matrix variable; \textquotedblleft digit\textquotedblright{}
- accurate decimal digits; \textquotedblleft prep\textquotedblright{}
- all preprocessing time before interior-point, including the conversion
process and MOSEK's internal preparation time; \textquotedblleft iter\textquotedblright{}
- number of interior-point iterations; \textquotedblleft per-it\textquotedblright{}
- interior-point time per iteration; \textquotedblleft post\textquotedblright{}
- post-processing time after interior-point, and to recover $U$ satisfying
$\protect\proj_{F}(UU^{T})=Y$ via \citep[Algorithm 2]{sun2015decomposition}. }

\resizebox{\textwidth}{!}{

\begin{tabular}{|c|cc|cccc|cccc|cccc|c|}
\hline 
 &  &  & \multicolumn{4}{c|}{Dual CTC (provable)} & \multicolumn{4}{c|}{Dual CTC (heuristic)} & \multicolumn{4}{c|}{CC (provable)} & \tabularnewline
\cline{4-15} \cline{5-15} \cline{6-15} \cline{7-15} \cline{8-15} \cline{9-15} \cline{10-15} \cline{11-15} \cline{12-15} \cline{13-15} \cline{14-15} \cline{15-15} 
\# & $n$ & $m$ & prep & digit & iter & per-it & prep & digit & iter & per-it & prep & digit & iter & per-it & post\tabularnewline
\hline 
48 & 1354 & 4060 & 2.14 & 8.4 & 44 & 0.84 & 1.49 & 8.4 & 42 & 0.12 & 0.32 & 8.3 & 47 & 0.05 & 0.24\tabularnewline
49 & 1888 & 5662 & 3.40 & 7.8 & 49 & 1.98 & 2.15 & 7.8 & 53 & 0.15 & 0.41 & 8.0 & 58 & 0.07 & 0.33\tabularnewline
50 & 1951 & 5851 & 3.78 & 7.9 & 46 & 3.17 & 2.21 & 7.7 & 52 & 0.15 & 0.43 & 8.0 & 59 & 0.07 & 0.33\tabularnewline
51 & 2000 & 5998 & 34.68 & 8.1 & 32 & 122.93 & 4.20 & 8.3 & 32 & 2.54 & 1.89 & 7.9 & 28 & 1.23 & 0.98\tabularnewline
52 & 2383 & 7147 & 7.97 & 7.6 & 47 & 14.46 & 2.41 & 7.7 & 41 & 0.32 & 0.81 & 7.7 & 44 & 0.24 & 0.62\tabularnewline
53 & 2736 & 8206 & 9.33 & 7.1 & 57 & 20.59 & 2.83 & 7.8 & 58 & 0.36 & 0.86 & 7.9 & 57 & 0.28 & 0.77\tabularnewline
54 & 2737 & 8209 & 12.31 & 6.7 & 61 & 17.16 & 2.87 & 7.4 & 61 & 0.41 & 0.85 & 8.4 & 56 & 0.31 & 0.76\tabularnewline
55 & 2746 & 8236 & 9.14 & 7.2 & 50 & 18.34 & 4.21 & 7.5 & 51 & 0.58 & 0.92 & 8.4 & 52 & 0.32 & 0.71\tabularnewline
56 & 2746 & 8236 & 10.98 & 7.2 & 60 & 19.83 & 4.07 & 7.2 & 54 & 0.66 & 0.93 & 7.8 & 59 & 0.38 & 1.17\tabularnewline
57 & 2848 & 8542 & 7.52 & 7.5 & 54 & 5.25 & 3.62 & 7.9 & 60 & 0.26 & 0.69 & 6.9 & 58 & 0.18 & 0.57\tabularnewline
58 & 2868 & 8602 & 5.81 & 7.6 & 54 & 6.63 & 3.64 & 7.8 & 60 & 0.23 & 0.70 & 7.8 & 55 & 0.15 & 0.56\tabularnewline
59 & 2869 & 8605 & 9.62 & 8.0 & 47 & 7.36 & 2.98 & 8.0 & 47 & 0.23 & 0.81 & 8.0 & 46 & 0.20 & 0.54\tabularnewline
60 & 3012 & 9034 & 9.85 & 7.7 & 50 & 20.65 & 3.22 & 7.7 & 49 & 0.49 & 1.07 & 8.0 & 52 & 0.37 & 0.95\tabularnewline
61 & 3120 & 9358 & 11.57 & 7.1 & 62 & 21.13 & 3.43 & 7.6 & 61 & 0.54 & 1.12 & 8.2 & 66 & 0.42 & 1.16\tabularnewline
62 & 3374 & 10120 & 12.11 & 7.3 & 57 & 24.02 & 3.85 & 7.7 & 55 & 0.80 & 1.25 & 8.0 & 54 & 0.52 & 1.12\tabularnewline
63 & 6468 & 19402 & 21.38 & 7.7 & 65 & 43.95 & 10.75 & 8.2 & 61 & 1.17 & 1.83 & 7.9 & 63 & 0.73 & 2.37\tabularnewline
64 & 6470 & 19408 & 21.28 & 7.9 & 61 & 40.90 & 8.25 & 8.1 & 59 & 0.79 & 1.92 & 7.8 & 60 & 0.74 & 2.18\tabularnewline
65 & 6495 & 19483 & 20.48 & 7.7 & 68 & 42.03 & 7.95 & 8.1 & 67 & 0.88 & 1.92 & 8.1 & 60 & 0.64 & 2.27\tabularnewline
66 & 6515 & 19543 & 20.48 & 7.6 & 65 & 44.33 & 11.07 & 8.0 & 61 & 1.16 & 1.82 & 7.9 & 60 & 0.62 & 2.16\tabularnewline
67 & 9241 & 27721 & 48.58 & 7.8 & 67 & 120.46 & 14.43 & 7.9 & 54 & 3.02 & 3.69 & 7.8 & 63 & 1.68 & 3.90\tabularnewline
68 & 10000 & 29998 & 55.69 & 8.0 & 49 & 188.19 & 19.83 & 8.2 & 41 & 3.63 & 4.58 & 8.1 & 42 & 2.56 & 4.68\tabularnewline
69 & 13659 & 40975 & 56.28 & 7.2 & 57 & 126.93 & 28.78 & 7.7 & 49 & 3.07 & 4.34 & 7.8 & 50 & 1.84 & 6.37\tabularnewline
70 & 25000 & 74998 & - & - & - & - & 69.16 & 7.4 & 114 & 25.05 & 19.36 & 7.7 & 118 & 14.73 & 25.06\tabularnewline
71 & 70000 & 209998 & - & - & - & - & - & - & - & - & 96.85 & 7.9 & 65 & 180.82 & 113.19\tabularnewline
72 & 82000 & 245994 & - & - & - & - & - & - & - & - & 99.38 & 8.0 & 68 & 197.34 & 140.42\tabularnewline
\hline 
\end{tabular}

}
\end{table}
\begin{figure}
\subfloat[]{\includegraphics[width=0.49\columnwidth]{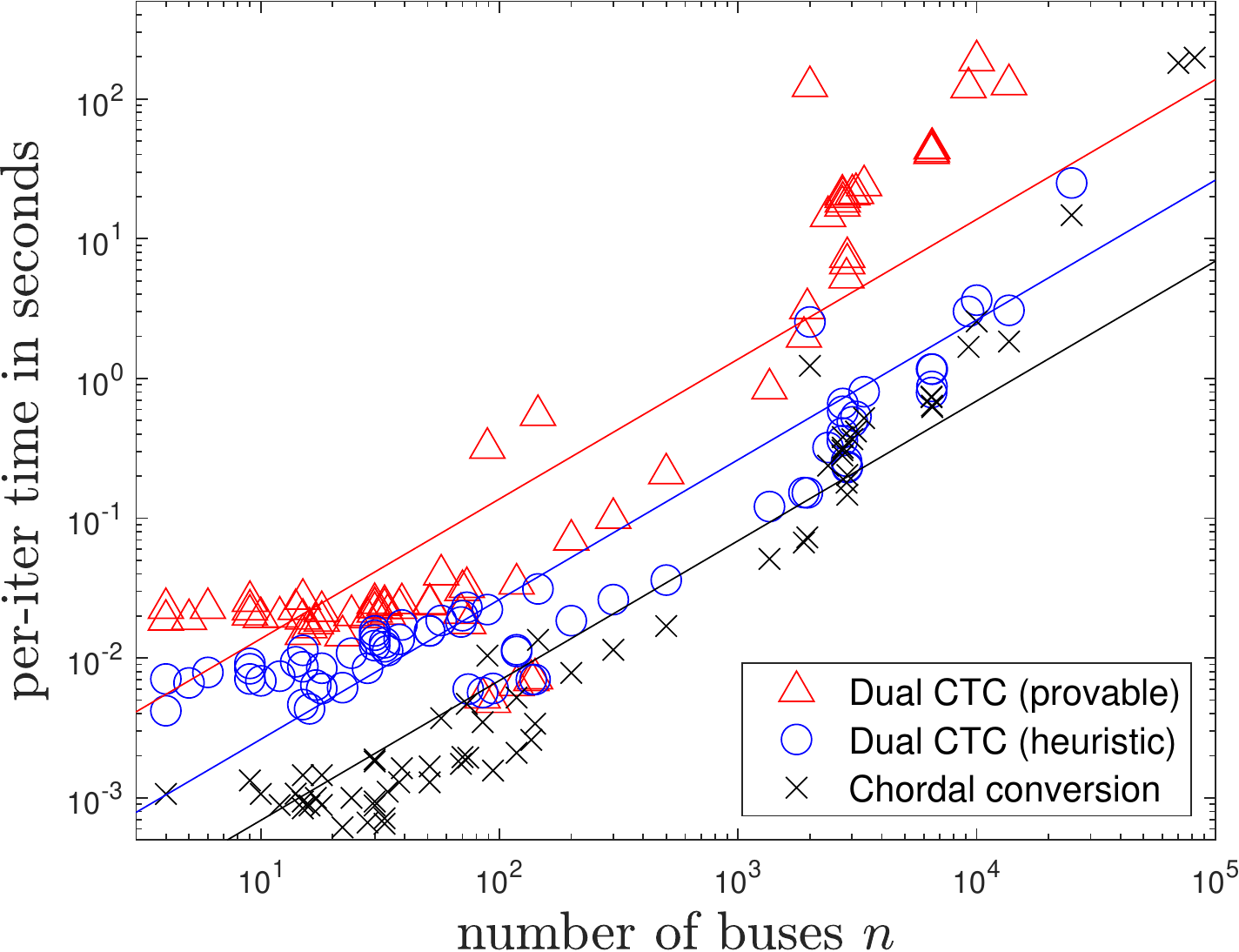}

}\subfloat[]{\includegraphics[width=0.49\columnwidth]{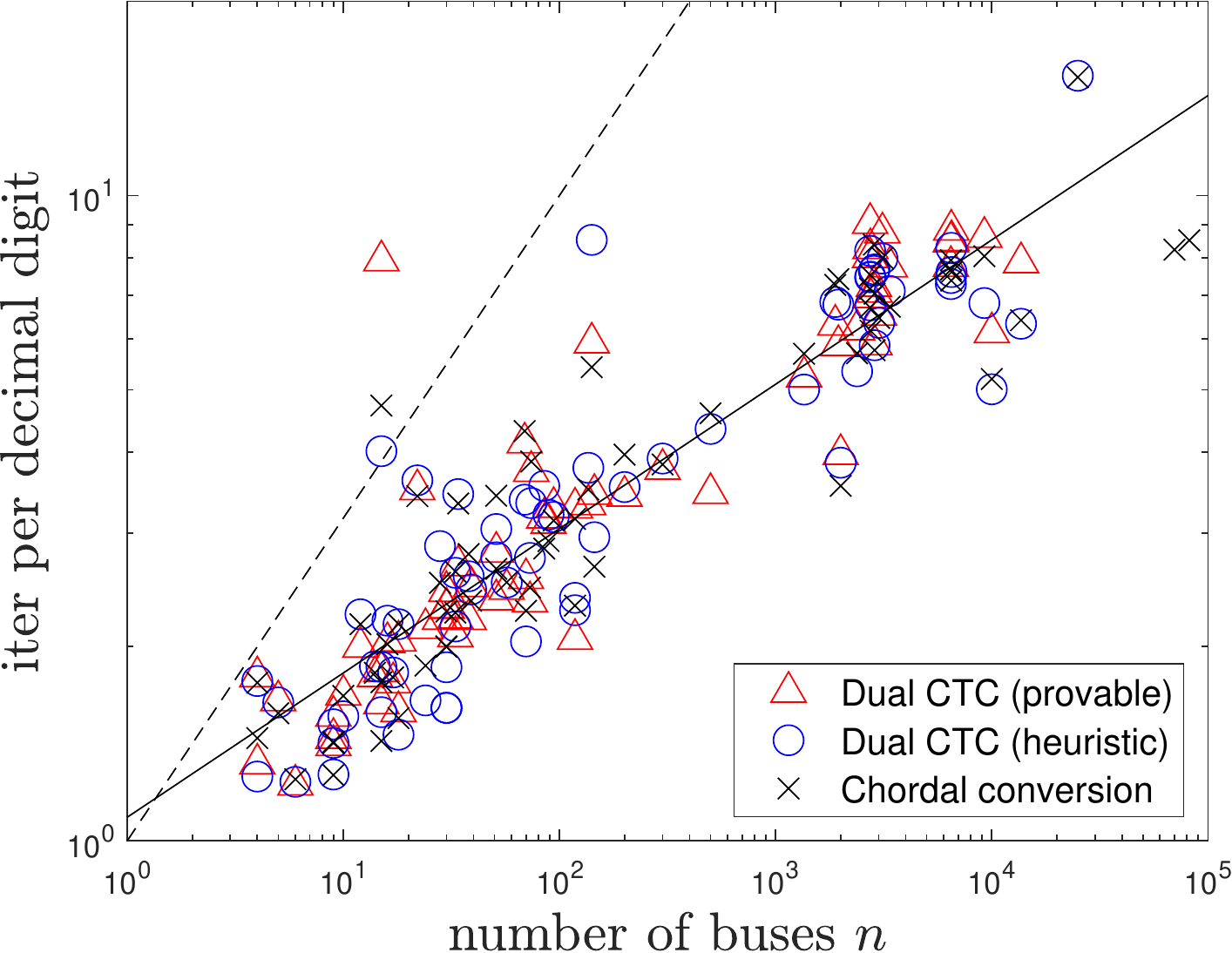}

}

\caption{\label{fig:acopf}AC optimal power flow relaxation solved via chordal
conversion ($\times$), heuristic Dual CTC ($\circ$), and provable
Dual CTC ($\triangle$): (a) Time per iteration, with regression lines
of $f_{\times}(n)=6.908n\times10^{-5}$ with $R^{2}=0.90$, $f_{\circ}(n)=2.623n\times10^{-4}$
with $R^{2}=0.83$, and $f_{\triangle}(n)=1.375n\times10^{-3}$ with
$R^{2}=0.80$; (b) Iterations per decimal digit of accuracy, with
(solid) regression $g(n)=1.088n^{0.224}$ with $R^{2}=0.84$ and (dashed)
bound $g(n)=\sqrt{n}$.}
\end{figure}

Our second set of experiments is on the AC optimal power flow relaxation
(\exaref{acopf}), for which $G\subset\o G$ generally holds with
strict inequality. Here, recall that $\tw(G)=2\cdot\tw(\G)$ and $\tw(\o G)=2\cdot\tw(\mathcal{G}^{2})$,
where $\G$ is the graph of the underlying electric grid, and $\G^{2}$
is its square graph. For the 72 power system graphs taken from the
MATPOWER suite~\citep{zimmerman2011matpower}, we compute upper-
and lower-bounds on $\tw(\G)$ and $\tw(\G^{2})$ using the ``FillIn''
and the ``MMD+'' heuristics outlined in~\citep{maniu2019experimental},
and find that $\tw(\G^{2})$ is small in all 72 power system graphs
(see \tabref{treedecomp}). 

We solve the problem using CC and the two variants of CTC. In all
three cases, it takes a consistent 50 to 70 iterations to achieve
$\epsilon\approx10^{-6}$, again until numerical issues at very large
scales $n\approx10^{4}$ forced more iterations to be taken (see \tabref{opf}).
For these smaller large-scale problems with $G\ne\o G$, we find that
CC and CTC had comparable processing times, but CC is between 2 and
100 times faster than either variant of CTC in its solution time.
The largest case is \texttt{case\_SyntheticUSA} with $82000$ buses
(due to \citep{birchfield2016grid}), which we solve with CC in 4
hours. Previously, this was solved using CTC in 8 hours on a high-performance
computing (HPC) node with two Intel XeonE5-2650v4 processors (a total
of 24 cores) and 240 GB memory~\citep{Eltved2019}.

\section{Conclusions and future directions}

Chordal conversion can sometimes allow an interior-point method to
solve a large-scale sparse SDP in just $O(m+n)$ time per-iteration.
Previously, a well-known necessary condition is that the aggregate
sparsity graph $G=(V,E)$ should have an $O(1)$ treewidth (independent
of $m$ and $n$):
\begin{gather*}
V=\{1,2,\dots,n\},\quad E=\spar(C)\cup\spar(A_{1})\cup\cdots\cup\spar(A_{m}),\\
\text{where }\spar(C)\equiv\{(i,j):C[i,j]\ne0\text{ for }i>j\}.
\end{gather*}
In this paper, provide a companion sufficient condition, namely that
the \emph{extended} aggregate sparsity graph $\o G=(V,\overline{E})$
should also have an $O(1)$ treewidth:
\begin{gather*}
V=\{1,2,\dots,n\},\quad\overline{E}=\spar(C)\cup\clique(A_{1})\cup\cdots\cup\clique(A_{m})\\
\text{where }\clique(A)=\{(i,j):A[i,k]\ne0\text{ or }A[k,j]\ne0\text{ for some }k\}.
\end{gather*}
The key to our analysis is to characterize the Schur complement sparsity
$E^{(2)}$, the sparsity pattern of the linear equations solved at
each iteration, directly in terms of $\o E$. 

Our primary focus has been on reducing the per-iteration costs to
$O(m+n)$. Naively applying this figure to the $O(\sqrt{m+n}\log(1/\epsilon))$
iterations of a general-purpose interior-point method results in an
end-to-end complexity of $O((m+n)^{1.5}\log(1/\epsilon))$ time. By
adopting the treewidth-based interior-point method of Dong, Lee, and
Ye~\citep[Theorem~1.3]{dong2021nearly}, as was done in the recent
preprint of Gu and Song~\citep{gu2022faster}, it should be possible
to formally reduce the end-to-end complexity down to $O((m+n)\log(1/\epsilon))$
time. However, we mention that interior-point methods in practice
often converge to $\epsilon$ accuracy in $O(\log(1/\epsilon))$ iterations
(without the square-root factor), and as such the empirical complexity
is already $O((m+n)\log(1/\epsilon))$ time. 

In many applications, $G$ and $\o G$ either coincide or are very
close, so our analysis becomes either exact or nearly exact; an $O(1)$
treewidth in $\o G$ is both necessary and sufficient for chordal
conversion to be fast. In cases where $G$ and $\o G$ are very different,
particularly when the treewidth of $\o G$ is $\Omega(n)$ while the
treewidth of $G$ is $O(1)$, our preliminary simulations suggest
that the per-iteration cost could slow down $\Omega(n^{3})$ time,
but more work is needed to establish this rigorously. Finally, even
where interior-point methods are no longer efficient, it may still
be possible to use chordal conversion to improve the efficiency of
first-order methods and/or nonconvex approaches. 

\subsubsection*{Acknowledgments}

I am grateful to Martin S. Andersen for numerous insightful discussions,
and for his early numerical experiments that motivated much of the
subsequent theoretical analysis in this paper. The paper has also
benefited significantly from discussions with Subhonmesh Bose, Salar
Fattahi, Alejandro Dominguez--Garcia, Cedric Josz, and Sabrina Zielinski.
I thank the associate editor and two reviewers for helpful comments
that significantly sharpened the presentation of the paper.

\subsubsection*{Funding}

Financial support for this work was provided in part by the NSF CAREER
Award ECCS-2047462 and in part by C3.ai Inc. and the Microsoft Corporation
via the C3.ai Digital Transformation Institute.

\subsubsection*{Conflicts of interest/Competing interests}

The author has no relevant financial or non-financial interests to
disclose.

\appendix

\section{\label{app:regularity}Proof of the standard-form assumptions}

Given data $C,A_{1},\dots,A_{m}\in\S^{n}$ and $b\in\R^{m}$, define
$(\A,\b,\c,\K)$ as in (\ref{eq:AKdef}). In this section, we verify
that $(\A,\b,\c,\K)$ specifies an SDP that satisfies the regularity
assumptions in \defref{clp}. 
\begin{lemma}[Linear independence]
We have $\A^{T}y=0$ if and only $y=0$. 
\end{lemma}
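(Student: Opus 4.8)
The plan is to exploit the block structure of $\A=[\svec_{F}(A_{1}),\dots,\svec_{F}(A_{m}),-\P_{1},\dots,-\P_{n}]$ from (\ref{eq:AKdef}), and in particular the columns $-\P_{j}$ coming from the matrix-completion cone constraints $Y[\col_{F}(j),\col_{F}(j)]\succeq0$. Write $y=\svec_{F}(Y)$ for the unique $Y\in\S_{F}^{n}$ it represents (recall $\svec_{F}$ is a bijection onto $\R^{|F|}$). The components of $\A^{T}y$ then split into the $\R^{m}$ block $(\svec_{F}(A_{i})^{T}y)_{i=1}^{m}=(\inner{A_{i}}{Y})_{i=1}^{m}$ and the blocks $-\P_{j}^{T}y=-\svec(Y[\col_{F}(j),\col_{F}(j)])$ for $j=1,\dots,n$, using the defining identity (\ref{eq:AKdef-2}) for $\P_{j}$. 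Hence $\A^{T}y=0$ forces in particular $Y[\col_{F}(j),\col_{F}(j)]=0$ for every $j$.

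The key observation is that this last family of conditions alone already pins down $Y=0$, so the rows $\inner{A_{i}}{Y}$ play no role. Indeed, for any $(i,j)\in F$ with $i>j$ we have $i\in\col_{F}(j)$ and $j\in\col_{F}(j)$ by the definition of $\col_{F}(\cdot)$, so $Y[i,j]$ is an entry of the vanishing submatrix $Y[\col_{F}(j),\col_{F}(j)]$; likewise each diagonal entry $Y[j,j]$ lies in $Y[\col_{F}(j),\col_{F}(j)]$. Thus $Y[i,j]=0$ for all $(i,j)\in F$, and since $Y\in\S_{F}^{n}$ is supported on $F$, this gives $Y=0$, hence $y=\svec_{F}(Y)=0$. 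The converse $y=0\Rightarrow\A^{T}y=0$ is trivial, completing the equivalence.

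There is no genuine obstacle here; the statement is essentially a bookkeeping consequence of the construction. The only points requiring care are confirming that the column partition of $\A$ and the cone factors of $\K$ line up so that the $\P_{j}$-blocks of $\A^{T}y$ are exactly the symmetric vectorizations of the clique submatrices of $Y$, and invoking the covering fact $\bigcup_{j}\clique(\col_{F}(j))\supseteq F$ — both of which are immediate from the definitions in the Preliminaries.
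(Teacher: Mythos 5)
Your proof is correct and uses the same key ingredients as the paper's: the defining identity $\P_{j}^{T}\svec_{F}(Y)=\svec(Y[\col_{F}(j),\col_{F}(j)])$, the fact that the $\P_j$ columns alone suffice, and the covering $F=\bigcup_{j}\clique(\col_{F}(j))$. The only difference is presentational: the paper runs the same covering argument quantitatively to establish $\sum_{k}\P_{k}\P_{k}^{T}\succeq I$ (hence $\A\A^{T}\succeq I$), while you argue qualitatively that each entry $Y[i,j]$ for $(i,j)\in F$ is forced to vanish, which gives injectivity directly but not the spectral lower bound.
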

\begin{proof}
We will prove $\sum_{k=1}^{n}\P_{k}\P_{k}^{T}\succeq I$, which implies
$\A\A^{T}\succeq I$ and hence $\|\A^{T}y\|\ge\|y\|$. For arbitrary
$Y\in\S_{F}^{n}$ with $y=\svec_{F}(Y)$, we observe that 
\[
\|\P_{k}^{T}y\|^{2}=\|\svec(Y[J_{k},J_{k}])\|^{2}=\sum_{i,j\in J_{k}}(Y[i,j])^{2}=\sum_{(i,j)\in\clique(J_{k})}\gamma_{ij}(Y[i,j])^{2}
\]
where $\gamma_{ij}=1$ if $i=j$ and $\gamma_{ij}=2$ if $i\ne j$.
Therefore, we have
\[
\sum_{k=1}^{n}\|\P_{k}^{T}y\|^{2}=\sum_{k=1}^{n}\sum_{(i,j)\in\clique(J_{k})}\gamma_{ij}(Y[i,j])^{2}\ge\sum_{(i,j)\in F}\gamma_{ij}(Y[i,j])^{2}=\|y\|^{2}.
\]
The inequality is because $F=\bigcup_{k=1}^{n}\clique(J_{k})$. \qed
\end{proof}

\begin{lemma}[Strong duality is attained]
Under \asmref{strict}, there exists $x^{\star},s^{\star}\in\K$
satisfying $\A x^{\star}=\b,$ $\A^{T}y^{\star}+s^{\star}=\c,$ $(x^{\star})^{T}s^{\star}=0$. 
\end{lemma}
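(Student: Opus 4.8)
The plan is to identify the conic primal $\min_{x}\{\c^{T}x:\A x=\b,\;x\in\K\}$ and its dual $\max_{y}\{\b^{T}y:\c-\A^{T}y\in\K\}$ explicitly, recognize them as $-1$ times the dual and the primal of the original SDP~(\ref{eq:sdp}), and then use \asmref{strict} together with weak duality to conclude. For the dual: writing $y=\svec_{F}(Y)$ for $Y\in\S_{F}^{n}$, the membership $\c-\A^{T}y\in\K$ unpacks to $\inner{A_{i}}{Y}\le b_{i}$ for all $i$ and $Y[J_{j},J_{j}]\succeq0$ for all $j$, while $\b^{T}y=-\inner{C}{Y}$; thus the dual is exactly $-1$ times the problem~(\ref{eq:cc}), which in turn equals~(\ref{eq:sdp}) by the positive-semidefinite completion theorem already cited in the introduction~\citep{grone1984positive,vandenberghe2015chordal} (using $\spar(C),\spar(A_{i})\subseteq E\subseteq F$, so that $\inner{A_{i}}{Y}=\inner{A_{i}}{X}$ and $\inner{C}{Y}=\inner{C}{X}$ whenever $Y=\proj_{F}(X)$). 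Hence the dual optimal value equals $-p^{\star}$, where $p^{\star}$ is the optimal value of~(\ref{eq:sdp}). For the primal: writing $x=(v,\svec(X_{1}),\dots,\svec(X_{n}))$, the equality $\A x=\b$ reads $\sum_{j}\P_{j}\svec(X_{j})=\svec_{F}(C+\sum_{i}v_{i}A_{i})$, i.e.\ the clique-supported matrices $\P_{j}\svec(X_{j})$ must sum to $C+\sum_{i}v_{i}A_{i}$. Since $F=\chol(E)$ is zero-fill and each $\col_{F}(j)$ therefore induces a clique of $F$, the clique-decomposition (dual) half of the same theorem says that a matrix in $\S_{F}^{n}$ is positive semidefinite iff it is a sum of positive-semidefinite matrices supported on the cliques $\col_{F}(j)$; so the primal equals $\min_{v\ge0}\{b^{T}v:C+\sum_{i}v_{i}A_{i}\succeq0\}$, which after the substitution $v=-v^{\star}$ becomes $-\max_{v^{\star}\le0}\{\inner{b}{v^{\star}}:\sum_{i}v_{i}^{\star}A_{i}\preceq C\}=-d^{\star}$, minus the optimal value of the SDP dual.

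With the two values identified, the conclusion is immediate. Weak duality for~(\ref{eq:sdp}) gives $p^{\star}\ge d^{\star}$, and the feasible pair in \asmref{strict} gives $p^{\star}\le\inner{C}{X^{\star}}=\inner{b}{v^{\star}}\le d^{\star}$, so $p^{\star}=d^{\star}$ and both values are attained --- at $X^{\star}$ for the SDP and at $v^{\star}$ for its dual. I would then lift these back to the conic program: set $y^{\star}=\svec_{F}(\proj_{F}(X^{\star}))$, which is dual feasible with $\b^{T}y^{\star}=-\inner{C}{X^{\star}}=-p^{\star}$; and set $x^{\star}=(-v^{\star},\svec(X_{1}^{\star}),\dots,\svec(X_{n}^{\star}))$, where $X_{j}^{\star}=M_{j}[\col_{F}(j),\col_{F}(j)]$ is read off from a clique decomposition $C-\sum_{i}v_{i}^{\star}A_{i}=\sum_{j}M_{j}$ (which exists because the slack matrix is positive semidefinite with sparsity pattern inside $F$). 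Then $x^{\star}\in\K$ is primal feasible with $\c^{T}x^{\star}=-\inner{b}{v^{\star}}=-d^{\star}=-p^{\star}=\b^{T}y^{\star}$, so, setting $s^{\star}=\c-\A^{T}y^{\star}\in\K$, we get $(x^{\star})^{T}s^{\star}=\c^{T}x^{\star}-(\A x^{\star})^{T}y^{\star}=\c^{T}x^{\star}-\b^{T}y^{\star}=0$, which is the claim.

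The one step needing genuine care is the correct invocation of the two halves of the chordal matrix theorem --- the completion direction to reformulate the dual and the clique-decomposition direction to both reformulate the primal and construct the explicit $x^{\star}$ --- together with verifying that every sparsity containment lines up ($\spar(C),\spar(A_{i})\subseteq E\subseteq F$, and $\col_{F}(j)$ a clique of the zero-fill pattern $F$ so that $\P_{j}\svec(X_{j})$ really is clique-supported). Everything else is routine bookkeeping with $\svec_{F}$, the lift maps $\P_{j}$, and the sign change $v=-v^{\star}$.
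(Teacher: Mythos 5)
Your proof is correct and follows essentially the same route as the paper: both apply the chordal clique-decomposition theorem to the slack matrix $C-\sum_i v_i^\star A_i$ to build a conic-primal-feasible $x^\star$, take $y^\star=\svec_F(\proj_F(X^\star))$ as a conic-dual-feasible point, and deduce complementarity from the matching objectives. The only difference is that you also invoke the chordal \emph{completion} direction to identify the conic dual's value with that of~(\ref{eq:sdp}), which is not actually needed (it suffices that $\proj_F$ of a PSD matrix has PSD principal submatrices and that $\spar(C)\subseteq F$ makes the objectives agree); the paper's argument skips this step and is accordingly a bit leaner.
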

\begin{proof}
Define $P_{j}$ implicitly to satisfy $P_{j}^{T}x=x[\col_{F}(j)]$
for all $x\in\R^{m}$. \asmref{strict} says that there exists $\hat{X}\succeq0$
and $\hat{y}\le0$ and that satisfy $\AA(\hat{X})\le b$ and $\AA^{T}(\hat{y})+\hat{S}=C$
and $\inner C{\hat{X}}=\inner b{\hat{y}}$. It follows from \citep[Theorem~9.2]{vandenberghe2015chordal}
that $\hat{S}=C-\AA^{T}(\hat{y})\in\S_{F}^{n}\cap\S_{+}^{n}$ if and
only if there exists $\hat{S}_{j}\succeq0$ such that $\hat{S}=\sum_{j=1}^{n}P_{j}\hat{S}_{j}P_{j}^{T}$.
Now, we turn to the primal-dual pair defined by the data $(\A,\b,\c,\K)$
in (\ref{eq:AKdef}), which is written 
\begin{align*}
 & \min_{v\in\R^{m},V_{j}\in\S^{\omega_{j}}}\left\{ \inner bv:\begin{array}{c}
\proj_{F}\left[\AA^{T}(-v)+\sum_{j=1}^{n}P_{j}V_{j}P_{j}^{T}-C\right]=0,\\
v\ge0,\quad V_{1},\dots,V_{n}\succeq0
\end{array}\right\} \\
= & \max_{Y\in\S_{F}^{n}}\left\{ \inner{-C}Y:\AA(Y)\le b,\;-P_{j}^{T}YP_{j}\preceq0\text{ for all }j\in\{1,2,\dots,n\}\right\} 
\end{align*}
We can mechanically verify that $v^{\star}=-\hat{y}$ and $V_{j}^{\star}=\hat{S}_{j}$
is feasible for the primal problem, and $Y^{\star}=\Pi_{F}(\hat{X})$
is feasible for the dual problem, and that the two objectives coincide
$\inner b{v^{\star}}=\inner{-C}{Y^{\star}}$. Therefore, we conclude
that $x^{\star}=(v^{\star},\svec(V_{1}^{\star}),\dots,\svec(V_{n}^{\star}))$
and $y^{\star}=\svec_{F}(Y^{\star})$ is a complementary solution
satisfying $\A^{T}x^{\star}=\b,$ $\A^{T}y^{\star}+s^{\star}=\c,$
and $\inner{x^{\star}}{s^{\star}}=0$. \qed 
\end{proof}

\section{\label{app:ipm}Complexity of general-purpose interior-point methods}

In this section, we state a concrete interior-point method that implements
the specifications outlined in \defref{ipm}, roughly following the
steps in~\citep{de2000self}.
\begin{proposition}
\label{prop:general}Let $(\A,\b,\c,\K)$ describe an SDP in $(n,\omega)$-standard
form, and let $\opt$ denote its optimal value. Let $\Tim$ and $\Mem$
denote the time and memory needed, given data $\A\in\R^{M\times N}$,
$g\in\R^{M}$, and a choice of $w\in\interior(\K)$, to form and solve
$\A\D\A^{T}\Delta y=g$ for $\Delta y\in\R^{M}$, where $\D^{-1}=\nabla^{2}f(w)$
and $f=-\log\det(w)$. Then, a general-purpose interior-point method
computes $(x,y)$ that satisfy the following in $O(\sqrt{n}\log(1/\epsilon))$
iterations
\begin{gather*}
\c^{T}x\le\mathrm{opt}+n\cdot\epsilon,\quad\|\A x-\b\|\le\epsilon,\quad x\in\K,\\
\b^{T}y\ge\mathrm{opt}-n\cdot\epsilon,\quad\c-\A^{T}y+\epsilon\one_{\K}\in\K,
\end{gather*}
with per-iteration costs of $O(\omega^{2}n+\nnz(\A)+\Tim)$ time and
$O(\omega n+\nnz(\A)+\Mem)$ memory.
\end{proposition}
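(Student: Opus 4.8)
The plan is to reduce the standard-form SDP $(\A,\b,\c,\K)$ to a homogeneous self-dual embedding, run an off-the-shelf short-step path-following method on that embedding, and then carefully price one iteration of the resulting scheme. First I would construct the homogeneous self-dual embedding of the primal-dual pair following \citep{de2000self}: this is a single self-dual conic feasibility problem over $\K\times\R_{+}$ augmented with the usual scalars $\tau,\kappa\ge0$, for which the all-identity point $(x,y,s,\tau,\kappa)=(\one_{\K},0,\one_{\K},1,1)$ is strictly feasible and perfectly centered, so that no Phase-I procedure is needed. The barrier is the block-wise logarithmic barrier $f(w)=-\log\det(w)$ together with the scalar $-\log(\cdot)$ terms, a self-concordant barrier of parameter $\nu=(\sum_{i=1}^{\ell}\omega_{i})+O(1)=n+O(1)$. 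Condition~3 of \defref{clp}, that strong duality is attained for $(\A,\b,\c,\K)$, guarantees that the embedding is solvable in the ``optimal'' branch, i.e.\ that $\tau$ stays bounded away from $0$ along the central path; this is exactly what makes it possible to recover a genuine primal-dual pair for the original problem, and is the reason \asmref{strict} is imposed in \thmref{total}.

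Next I would run a standard symmetric primal-dual path-following method on the embedding --- concretely, the short-step (or predictor--corrector) Newton method with the Nesterov--Todd scaling, as analyzed in \citep{de2000self} and the references underlying the solvers listed in \defref{ipm}. The classical analysis gives that after $O(\sqrt{\nu}\log(1/\epsilon))=O(\sqrt{n}\log(1/\epsilon))$ Newton steps the barrier parameter $\mu$ and the homogenized residuals are all reduced by a factor $\epsilon$ relative to their dimension-free initial values. Normalizing the resulting iterate by $\tau$ produces $(x,y)$; since the primal residual scales with $\mu$ we obtain $\|\A x-\b\|\le\epsilon$, the duality gap along the path obeys $\c^{T}x-\b^{T}y\le\nu\mu$ which yields $\c^{T}x\le\opt+n\epsilon$ and $\b^{T}y\ge\opt-n\epsilon$ (after rescaling $\epsilon$ by an absolute constant, harmless inside the logarithm), and the dual slack equals $\c-\A^{T}y$ up to a residual of norm $O(\mu)$; because $\one_{\K}$ is the block identity and therefore sits at unit distance from $\partial\K$, adding $\epsilon\cdot\one_{\K}$ restores membership, giving $\c-\A^{T}y+\epsilon\cdot\one_{\K}\in\K$.

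It remains to account for the cost of one iteration. Each Newton step: (i) computes the Nesterov--Todd scaling point $w\in\interior(\K)$, which reduces to an eigendecomposition and a symmetric square root on each $\omega_{i}\times\omega_{i}$ block, costing $\sum_{i}O(\omega_{i}^{3})\le O(\omega^{2}n)$ time and $\sum_{i}O(\omega_{i}^{2})\le O(\omega n)$ memory; (ii) assembles and solves the Schur complement equation $\A\D\A^{T}\Delta y=g$ with $\D=\nabla^{2}f(w)^{-1}$, which by hypothesis costs exactly $\Tim$ time and $\Mem$ memory; and (iii) forms the right-hand side $g$ and recovers the remaining components of the step through a constant number of matrix-vector products with $\A$ and $\A^{T}$ and block-wise cone arithmetic, costing $O(\nnz(\A)+\omega^{2}n)$ time and $O(\nnz(\A)+\omega n)$ memory. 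Since a predictor--corrector iteration performs only $O(1)$ such solves, the per-iteration totals are $O(\omega^{2}n+\nnz(\A)+\Tim)$ time and $O(\omega n+\nnz(\A)+\Mem)$ memory, as claimed.

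I expect the main obstacle to be bookkeeping rather than any single deep step: verifying that the Nesterov--Todd Newton system genuinely collapses to one solve of $\A\D\A^{T}\Delta y=g$ plus only the cheap $O(\omega^{2}n+\nnz(\A))$ operations listed above, and that the recovery map out of the self-dual embedding respects both the additive-$\epsilon$ dual-feasibility claim and the $n\epsilon$ gap bound. For the correctness of the embedding and the $O(\sqrt{n}\log(1/\epsilon))$ iteration count I would defer to \citep{de2000self} and the analyses underlying the solvers cited in \defref{ipm}; the only genuinely new content is matching the per-iteration cost to the abstract quantities $\Tim$ and $\Mem$.
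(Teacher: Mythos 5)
Your proposal follows essentially the same route as the paper: reduce to the homogeneous self-dual embedding of \citet{de2000self} with the all-identity starting point, run the Nesterov--Todd short-step method for $O(\sqrt{n}\log(1/\epsilon))$ iterations, use the bound $\tau\ge\gamma/\rho$ from the skew-symmetry of the embedding to justify de-homogenization, and price each iteration as the cost of the NT scaling ($O(\omega^{2}n)$ time, $O(\omega n)$ memory on the block structure) plus $O(1)$ Schur-complement solves plus matrix--vector products with $\A$. The only real difference is that the paper writes out the algebra reducing the Newton system (\ref{eq:hsd_nt}) to three solves against $\A\D\A^{T}$ in (\ref{eq:newt}), whereas you defer this to ``bookkeeping''; your high-level argument and cost accounting are otherwise the same.
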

We prove \propref{general} by using the short-step method of Nesterov
and Todd~\citep[Algorithm~6.1]{nesterov1998primal} to solve the
extended homogeneous self-dual embedding\begin{subequations}\label{eq:hsd}
\begin{alignat}{2}
 & \min_{x,y,s,\kappa,\tau,\theta}\quad & (n+1)\theta\\
 & \text{s.t.} & \begin{bmatrix}0 & +\A^{T} & -\c & -r_{d}\\
-\A & 0 & +\b & -r_{p}\\
+\c^{T} & -\b^{T} & 0 & -r_{c}\\
r_{d}^{T} & r_{p}^{T} & r_{c} & 0
\end{bmatrix}\begin{bmatrix}x\\
y\\
\tau\\
\theta
\end{bmatrix}+\begin{bmatrix}s\\
0\\
\kappa\\
0
\end{bmatrix} & =\begin{bmatrix}0\\
0\\
0\\
n+1
\end{bmatrix}\label{eq:hsdfeas}\\
 &  & x,s\in\K,\quad\kappa,\tau & \ge0,\label{eq:hsdcone}
\end{alignat}
\end{subequations}where $r_{d}=\one_{\K}-\c$ and $r_{p}=-\A\one_{\K}+\b$
and $r_{c}=1+\c^{T}\one_{\K}$. Beginning at the strictly feasible,
perfectly centered point in (\ref{eq:hsdinit}) for $\mu=1$:
\begin{gather}
\theta^{(0)}=\tau^{(0)}=\kappa^{(0)}=1,\quad y^{(0)}=0,\quad x^{(0)}=s^{(0)}=\one_{\K}.\label{eq:hsdinit}
\end{gather}
 we take the following steps\begin{subequations}\label{eq:hsd_ipm}
\begin{align}
\mu^{+} & =\left(1-\frac{1}{15\sqrt{n+1}}\right)\cdot\frac{x^{T}s+\tau\kappa}{n+1},\\
(x^{+},y^{+},s^{+},\tau^{+},\theta^{+},\kappa^{+}) & =(x,y,s,\tau,\theta,\kappa)+(\Delta x,\Delta y,\Delta s,\Delta\tau,\Delta\theta,\Delta\kappa).\nonumber 
\end{align}
\end{subequations}along the search direction defined by the linear
system~\citep[Eqn. 9]{todd1998nesterov} \begin{subequations}\label{eq:hsd_nt}
\begin{align}
\begin{bmatrix}0 & +\A^{T} & -\c & -r_{p}\\
-\A & 0 & +\b & -r_{d}\\
+\c^{T} & -\b^{T} & 0 & -r_{c}\\
+r_{p}^{T} & +r_{d}^{T} & +r_{c} & 0
\end{bmatrix}\begin{bmatrix}\Delta x\\
\Delta y\\
\Delta\tau\\
\Delta\theta
\end{bmatrix}+\begin{bmatrix}\Delta s\\
0\\
\Delta\kappa\\
0
\end{bmatrix} & =0,\\
s+\Delta s+\nabla^{2}f(w)\Delta x+\mu^{+}\nabla f(x) & =0,\\
\kappa+\Delta\kappa+(\kappa/\tau)\Delta\tau-\mu^{+}\tau^{-1} & =0.
\end{align}
\end{subequations}where $f(w)=-\log\det w$ is the usual self-concordant
barrier function, and $w\in\interior(\K)$ is the unique point that
satisfies $\nabla^{2}f(w)x=s$. The following is an immediate consequence
of~\citep[Theorem~6.4]{nesterov1998primal}.
\begin{lemma}[Short-Step Method]
\label{lem:short-step}The sequence in (\ref{eq:hsd_ipm}) arrives
at an iterate $(x,y,s,\tau,\theta,\kappa)$ satisfying
\begin{equation}
\frac{x^{T}s+\tau\kappa}{n+1}\le\epsilon,\qquad\tau\kappa\ge\gamma\epsilon\label{eq:short-step}
\end{equation}
with $\gamma=\frac{9}{10}$ in at most $O(\sqrt{n}\log(1/\epsilon))$
iterations. 
\end{lemma}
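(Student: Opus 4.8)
The plan is to recognize the iteration (\ref{eq:hsd_ipm})--(\ref{eq:hsd_nt}) as the Nesterov--Todd short-step primal-dual method applied to the homogeneous self-dual embedding (\ref{eq:hsd}), so that the claim reduces to a direct application of~\citep[Theorem~6.4]{nesterov1998primal}. First I would verify that (\ref{eq:hsd}) fits the self-scaled conic framework. The skew-symmetry of the coefficient matrix in (\ref{eq:hsdfeas}) makes the problem self-dual; the auxiliary variable $\theta$ together with the residuals $r_{d}=\one_{\K}-\c$, $r_{p}=-\A\one_{\K}+\b$, $r_{c}=1+\c^{T}\one_{\K}$ forces (\ref{eq:hsdinit}) to be strictly feasible; and the complementary pair $(x,\tau)$ versus $(s,\kappa)$ lives over the symmetric cone $\hat{\K}=\K\times\R_{+}$, whose logarithmically homogeneous self-concordant barrier $f(x)-\log\tau$ has parameter $n+1$ (indeed $f=-\log\det$ has parameter $n=\sum_{i}\omega_{i}$ on $\K=\svec(\S_{+}^{\omega_{1}})\times\cdots\times\svec(\S_{+}^{\omega_{\ell}})$, and $-\log\tau$ contributes $1$). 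At the initial point (\ref{eq:hsdinit}) one has $x^{T}s+\tau\kappa=\inner{\one_{\K}}{\one_{\K}}+1=n+1$, so the normalized gap is $\mu^{(0)}=(x^{T}s+\tau\kappa)/(n+1)=1$, and the iterate is perfectly centered with scaling point $\one_{\hat{\K}}$.

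Next I would match the update rule: the scaling point $w$ defined by $\nabla^{2}f(w)x=s$ in (\ref{eq:hsd_nt}) is the Nesterov--Todd scaling, the target $\mu^{+}=\bigl(1-\tfrac{1}{15\sqrt{n+1}}\bigr)\cdot\tfrac{x^{T}s+\tau\kappa}{n+1}$ is the short-step target reduction with step parameter within the admissible range of~\citep[Algorithm~6.1]{nesterov1998primal}, and (\ref{eq:hsd_nt}) is precisely the corresponding Newton system. Then~\citep[Theorem~6.4]{nesterov1998primal} applies directly and gives that every iterate $(x,y,s,\tau,\theta,\kappa)$ stays strictly feasible and inside a fixed narrow neighborhood $\mathcal{N}(\beta)$ of the central path, with $\mu^{(k)}=(x^{T}s+\tau\kappa)/(n+1)\le\bigl(1-\tfrac{1}{15\sqrt{n+1}}\bigr)^{k}$. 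Taking $k$ to be the first index at which $\mu^{(k)}\le\epsilon$ costs at most $O(\sqrt{n+1}\log(1/\epsilon))=O(\sqrt{n}\log(1/\epsilon))$ iterations, which is the first bound in (\ref{eq:short-step}).

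For the second bound I would reuse this stopping index. Since $\mu^{(k-1)}>\epsilon$ and $\sqrt{n+1}\ge1$, we have $\mu^{(k)}=\bigl(1-\tfrac{1}{15\sqrt{n+1}}\bigr)\mu^{(k-1)}>\tfrac{14}{15}\epsilon$, so successive $\mu$-values cannot skip over the window $(\tfrac{9}{10}\epsilon,\epsilon]$. Membership in $\mathcal{N}(\beta)$ keeps every local complementarity value within a $(1\pm\beta)$ band of $\mu$, so in particular the nonnegative-orthant block satisfies $\tau\kappa\ge(1-\beta)\mu^{(k)}$; since the step size $1/(15\sqrt{n+1})$ and the neighborhood width $\beta$ in~\citep[Algorithm~6.1]{nesterov1998primal} are calibrated so that $(1-\beta)\cdot\tfrac{14}{15}\ge\tfrac{9}{10}$, we conclude $\tau\kappa\ge\tfrac{9}{10}\epsilon=\gamma\epsilon$.

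I expect the only delicate point to be this last piece of bookkeeping: confirming that the concrete constants (the barrier parameter $n+1$, the step size $1/(15\sqrt{n+1})$, and the neighborhood width $\beta$ of~\citep[Algorithm~6.1]{nesterov1998primal}) line up so that the geometric decrease, the iteration count, and the lower bound $\tau\kappa\ge\tfrac{9}{10}\epsilon$ all hold simultaneously for every $n\ge1$. Everything else is a routine transcription of the short-step analysis of~\citep[Theorem~6.4]{nesterov1998primal} from a generic self-scaled cone to $\hat{\K}=\K\times\R_{+}$, which is why the lemma is essentially immediate.
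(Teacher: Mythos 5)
Your proposal matches the paper exactly in spirit: the paper's entire ``proof'' of this lemma is the single sentence preceding it, declaring the result ``an immediate consequence of~\citep[Theorem~6.4]{nesterov1998primal},'' and your argument is the natural unpacking of that citation (identifying the HSD system as a conic program over $\hat{\K}=\K\times\R_{+}$ with barrier parameter $n+1$, a perfectly centered start at $\mu^{(0)}=1$, the Nesterov--Todd scaling, and the exact geometric decrease $\mu^{(k)}=\bigl(1-\tfrac{1}{15\sqrt{n+1}}\bigr)^{k}$). The one item you correctly flag as delicate is also the only one that could go wrong: deriving $\gamma=\tfrac{9}{10}$ from neighborhood membership together with $\mu^{(k)}>\tfrac{14}{15}\epsilon$ at the stopping index requires the proximity parameter $\beta$ of~\citep[Algorithm~6.1]{nesterov1998primal} to satisfy $(1-\beta)\cdot\tfrac{14}{15}\ge\tfrac{9}{10}$, i.e.\ $\beta\le\tfrac{1}{28}$, which should be checked against the cited theorem rather than asserted.
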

Finally, the following result (adapted from~\citep[Lemma~5.7.2]{de2000self})
assures us that a feasible point satisfying the optimality condition
(\ref{eq:short-step}) will recover a solution to the original problem.

\begin{lemma}[$\epsilon$-accurate and $\epsilon$-feasible]
\label{lem:eps}Let there exist $(x^{\star},y^{\star})$ such that
$\A x^{\star}=\b$ with $x^{\star}\in\K$ and $\c-\A^{T}y^{\star}\equiv s^{\star}\in\K$
that satisfies strong duality $\b^{T}y^{\star}=\c^{T}x^{\star}$.
Then, a point $(x,y,s,\tau,\theta,\kappa)$ that is feasible for (\ref{eq:hsd})
and satisfies the optimality condition (\ref{eq:short-step}) also
satisfies the following, where $\rho=1+\one_{\K}^{T}(x^{\star}+s^{\star})$:
\begin{align*}
\|\A(x/\tau)-\b\| & \le\frac{\rho\|r_{p}\|}{\gamma}\cdot\epsilon, & \|\A^{T}(y/\tau)+(s/\tau)-\c\| & \le\frac{\rho\|r_{d}\|}{\gamma}\cdot\epsilon, & \frac{(x/\tau)^{T}(s/\tau)}{n+1}\le\frac{\rho^{2}}{\gamma^{2}}\cdot\epsilon.
\end{align*}
\end{lemma}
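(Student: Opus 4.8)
The plan is to run the standard homogeneous self-dual embedding argument, adapted from~\citep[Lemma~5.7.2]{de2000self}, whose engine is the skew-symmetry of the coefficient matrix in~(\ref{eq:hsdfeas}). Write $z=(x,y,\tau,\theta)$ and $\tilde{s}=(s,0,\kappa,0)$, so that feasibility for~(\ref{eq:hsd}) reads $Mz+\tilde{s}=h$ with $h=(0,0,0,n+1)$ and $M$ skew-symmetric. First I would left-multiply by $z^{T}$; since $z^{T}Mz=0$ this collapses to $x^{T}s+\tau\kappa=(n+1)\theta$. Hence the first half of~(\ref{eq:short-step}) gives $\theta\le\epsilon$, and dropping the nonnegative term $\tau\kappa$ gives $x^{T}s\le(n+1)\theta$.

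Next I would read off exact residual identities from the block rows of $Mz+\tilde{s}=h$: the second block row gives $\A(x/\tau)-\b=-r_{p}\,\theta/\tau$, the first gives $\A^{T}(y/\tau)+(s/\tau)-\c=r_{d}\,\theta/\tau$, and $(x/\tau)^{T}(s/\tau)=x^{T}s/\tau^{2}\le(n+1)\,\theta/\tau^{2}$. Thus the entire lemma reduces to bounding $\theta/\tau$ and $\theta/\tau^{2}$, i.e.\ to a lower bound on $\tau$.

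The crux is to construct a certificate vector from the strictly feasible pair in the hypothesis: take $\hat{z}=(x^{\star},y^{\star},1,0)$ and $\hat{\tilde{s}}=(s^{\star},0,0,0)$ (so $\hat{\kappa}=0$). Using $\A x^{\star}=\b$, $s^{\star}=\c-\A^{T}y^{\star}$, and strong duality $\b^{T}y^{\star}=\c^{T}x^{\star}$, one checks that the first three block rows of $M\hat{z}+\hat{\tilde{s}}$ vanish and that the fourth equals exactly $\rho=1+\one_{\K}^{T}(x^{\star}+s^{\star})$; that is, $M\hat{z}+\hat{\tilde{s}}=(0,0,0,\rho)$. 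Pairing $\hat{z}$ with $Mz+\tilde{s}=h$ and $z$ with $M\hat{z}+\hat{\tilde{s}}=(0,0,0,\rho)$, the skew-symmetry identity $\hat{z}^{T}Mz+z^{T}M\hat{z}=0$ cancels the quadratic cross terms and leaves $x^{T}s^{\star}+(x^{\star})^{T}s+\kappa=\rho\theta$. Since $\K$ is self-dual and $x,s,x^{\star},s^{\star}\in\K$ and $\kappa\ge0$, every term on the left is nonnegative, so in particular $\kappa\le\rho\theta$.

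Finally I would combine this with the second half of~(\ref{eq:short-step}): from $\tau\kappa\ge\gamma\epsilon>0$ we get $\kappa>0$ and $\tau\ge\gamma\epsilon/\kappa\ge\gamma\epsilon/(\rho\theta)$. Together with $\theta\le\epsilon$ this yields $\theta/\tau\le\rho\theta^{2}/(\gamma\epsilon)\le\rho\epsilon/\gamma$ and then $\theta/\tau^{2}=(\theta/\tau)(1/\tau)\le(\rho\epsilon/\gamma)\cdot(\rho\theta/(\gamma\epsilon))\le\rho^{2}\epsilon/\gamma^{2}$. Substituting these into the three residual identities from the second paragraph produces exactly the claimed bounds. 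I expect the only delicate part to be the sign bookkeeping in the skew-symmetric pairing and confirming that the fourth block row of the certificate reproduces precisely the constant $\rho$ as defined in the statement; the rest is routine substitution.
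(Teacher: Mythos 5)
Your proof is correct and follows essentially the same route as the paper's: construct a certificate from the strongly-dual pair, exploit the skew-symmetry of the HSD coefficient matrix to derive $\kappa\le\rho\theta$, and combine this with the two halves of~(\ref{eq:short-step}) to lower-bound $\tau$ and hence bound $\theta/\tau$ and $\theta/\tau^{2}$. The only cosmetic difference is that the paper rescales the certificate by $\hat{\tau}=(n+1)/\rho$ so that it is literally feasible for~(\ref{eq:hsd}) with $\hat{\theta}=0$ and uses the orthogonality identity $(x-\hat{x})^{T}(s-\hat{s})+(\tau-\hat{\tau})(\kappa-\hat{\kappa})=0$, whereas you keep the unscaled $\hat{z}=(x^{\star},y^{\star},1,0)$ and pair it symmetrically with the feasibility equation; both routes yield the identical inequality $\kappa\le\rho\theta$ and hence the same conclusion.
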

\begin{proof}
First, observe that $(\hat{x},\hat{y},\hat{s},\hat{\tau},\hat{\theta},\hat{\kappa})=(\hat{\tau}x^{\star},\hat{\tau}y^{\star},\hat{\tau}s^{\star},\hat{\tau},0,0)$
with $\hat{\tau}=(n+1)/\rho$ is a solution to (\ref{eq:hsd}), because
$r_{d}^{T}x^{\star}+r_{p}^{T}y^{\star}+r_{c}=\one_{\K}^{T}(x^{\star}+s^{\star})+1=\rho$.
Next, we prove that if $(x,y,s,\tau,\theta,\kappa)$ is feasible for
(\ref{eq:hsd}) and satisfies (\ref{eq:short-step}), then $\tau\ge\frac{\gamma}{\rho}$.
Indeed, the skew-symmetry of (\ref{eq:hsdfeas}) yields $\theta=\frac{x^{T}s+\tau\kappa}{n+1}$
and $(x-\hat{x})^{T}(s-\hat{s})+(\tau-\hat{\tau})(\kappa-\hat{\kappa})=0.$
Rearranging yields $(n+1)\theta=x^{T}s+\tau\kappa=\hat{x}^{T}s+x^{T}\hat{s}+\tau\hat{\kappa}+\hat{\tau}\kappa$
and hence $\kappa\le(n+1)\theta/\hat{\tau}$. Under (\ref{eq:short-step}),
we have $\theta=\frac{x^{T}s+\tau\kappa}{n+1}\le\epsilon$ and $\tau\ge\frac{\gamma\epsilon}{\kappa}\ge\frac{\gamma\theta}{\kappa}\ge\frac{\gamma\theta}{(n+1)\theta/\hat{\tau}}=\frac{\gamma}{n+1}\hat{\tau}=\frac{\gamma}{\rho}$.
Finally, divide (\ref{eq:hsdfeas}) through by $\tau$ and substitute
$1/\tau\le\rho/\gamma$. \qed
\end{proof}

\begin{proof}[Proof of \propref{general}]Recall that $(\A,\b,\c,\K)$
describe an SDP in $(n,\omega)$-standard form, and hence there exist
$(x^{\star},y^{\star})$ such that $\A x^{\star}=\b$ with $x^{\star}\in\K$
and $\c-\A^{T}y^{\star}\in\K$ that satisfies strong duality $\b^{T}y^{\star}=\c^{T}x^{\star}$.
Combining \lemref{eps} and \lemref{short-step} shows that iterations
in (\ref{eq:hsd_ipm}) converges to the desired $\epsilon$-accurate,
$\epsilon$-feasible iterate after $O(\sqrt{n}\log(1/\epsilon))$
iterations. The cost of each iteration is essentially equal to the
cost of computing the search direction in (\ref{eq:hsd_nt}). We account
for this cost via the following two steps:
\begin{enumerate}
\item (Scaling point) We partition $x=[\svec(X_{j})]_{j=1}^{\ell}$ and
$s=[\svec(S_{j})]_{j=1}^{\ell}$. Then, the scaling point $w=[\svec(W_{j})]_{j=1}^{\ell}$
is given in closed-form as $W_{j}=S_{j}^{1/2}(S_{j}^{1/2}X_{j}S_{j}^{1/2})^{-1/2}S_{j}^{1/2}$~\citep[Section~5]{sturm2002implementation}.
Noting that each $W_{j}$ is at most $\omega\times\omega$, the cost
of forming $w$ is at most of order 
\[
\sum_{j=1}^{\ell}\omega_{j}^{3}\le\omega^{2}\sum_{j=1}^{\ell}\omega_{j}=\omega^{2}n\text{ time},\quad\sum_{j=1}^{\ell}\omega_{j}^{2}\le\omega\sum_{j=1}^{\ell}\omega_{j}=\omega n\text{ memory.}
\]
By this same calculation, it follows that the Hessian matrix-vector
products $\nabla^{2}f(w)x=[\svec(W_{j}^{-1}X_{j}W_{j}^{-1})]_{j=1}^{\ell}$
and $\nabla^{2}f(w)^{-1}x=[\svec(W_{j}X_{j}W_{j})]_{j=1}^{\ell}$
also cost $O(\omega^{2}n)$ time and $O(\omega n)$ memory. 
\item (Search direction) Using elementary but tedious linear algebra, we
can show that if\begin{subequations}\label{eq:newt}
\begin{equation}
(\A\D\A^{T})\begin{bmatrix}v_{1} & v_{2} & v_{3}\end{bmatrix}=\begin{bmatrix}0 & -\b & r_{p}\end{bmatrix}-\A\D\begin{bmatrix}d & \c & r_{d}\end{bmatrix}\label{eq:hsd_nrm}
\end{equation}
where $\D=[\nabla^{2}f(w)]^{-1}$ and $d=-s-\mu^{+}\nabla f(x)$,
and 
\begin{equation}
\begin{bmatrix}u_{1} & u_{2} & u_{3}\end{bmatrix}=\D(\begin{bmatrix}d & c & r_{d}\end{bmatrix}+\A^{T}\begin{bmatrix}v_{1} & v_{2} & v_{3}\end{bmatrix}),\label{eq:hsd_back1}
\end{equation}
then
\begin{align}
\left(\begin{bmatrix}-\D_{0}^{-1} & -r_{c}\\
r_{c} & 0
\end{bmatrix}-\begin{bmatrix}\c & r_{d}\\
-\b & r_{p}
\end{bmatrix}^{T}\begin{bmatrix}u_{2} & u_{3}\\
v_{2} & v_{3}
\end{bmatrix}\right)\begin{bmatrix}\Delta\tau\\
\Delta\theta
\end{bmatrix} & =\begin{bmatrix}-d_{0}\\
0
\end{bmatrix}-\begin{bmatrix}\c & r_{d}\\
-\b & r_{p}
\end{bmatrix}^{T}\begin{bmatrix}u_{1}\\
v_{1}
\end{bmatrix},\\
\begin{bmatrix}\Delta x\\
\Delta y
\end{bmatrix} & =\begin{bmatrix}u_{1}\\
v_{1}
\end{bmatrix}-\begin{bmatrix}u_{1} & u_{2}\\
v_{1} & v_{2}
\end{bmatrix}\begin{bmatrix}\Delta\tau\\
\Delta\theta
\end{bmatrix},\\
\Delta s & =d-\D^{-1}\Delta x,\\
\Delta\kappa & =d_{0}-\D_{0}^{-1}\Delta\tau,\label{eq:hsd_back2}
\end{align}
\end{subequations}where $\D_{0}=\tau/\kappa$ and $d_{0}=-\kappa+\mu^{+}\tau^{-1}$.
Therefore, we conclude that the cost of computing the search direction
in (\ref{eq:hsd_nt}) is equal to the cost of solving $O(1)$ instances
of the Schur complement equation $\A\D\A^{T}\Delta y=g$, plus $O(1)$
matrix-vector products with $\A,\A^{T},\D,\D^{-1}$, for a total cost
of $O(\Tim+\nnz(\A)+\omega^{2}n)$ time and $O(\Mem+\nnz(\A)+\omega n)$
memory respectively. Note that $\A\D\A^{T}\succ0$ because $\A\A^{T}\succ0$
by the linear independence assumption, and $\D^{-1}=\nabla^{2}f(w)\succ0$
for all $w\in\interior(\K)$.
\end{enumerate}
\qed \end{proof}

\bibliographystyle{abbrvnat}
\bibliography{chordConv_short}

\begin{thebibliography}{48}
\providecommand{\natexlab}[1]{#1}
\providecommand{\url}[1]{\texttt{#1}}
\expandafter\ifx\csname urlstyle\endcsname\relax
  \providecommand{\doi}[1]{doi: #1}\else
  \providecommand{\doi}{doi: \begingroup \urlstyle{rm}\Url}\fi

\bibitem[Amestoy et~al.(2004)Amestoy, Davis, and Duff]{amestoy2004algorithm}
P.~R. Amestoy, T.~A. Davis, and I.~S. Duff.
\newblock Algorithm 837: {AMD}, an approximate minimum degree ordering
  algorithm.
\newblock \emph{ACM Trans. Math. Softw.}, 30\penalty0 (3):\penalty0 381--388,
  2004.

\bibitem[Andersen and Andersen(2000)]{andersen2000mosek}
E.~D. Andersen and K.~D. Andersen.
\newblock The {MOSEK} interior point optimizer for linear programming: an
  implementation of the homogeneous algorithm.
\newblock In \emph{High Performance Optimization}, pages 197--232. Springer,
  2000.

\bibitem[Andersen et~al.(2003)Andersen, Roos, and
  Terlaky]{andersen2003implementing}
E.~D. Andersen, C.~Roos, and T.~Terlaky.
\newblock On implementing a primal-dual interior-point method for conic
  quadratic optimization.
\newblock \emph{Math. Program.}, 95:\penalty0 249--277, 2003.

\bibitem[Bai et~al.(2008)Bai, Wei, Fujisawa, and Wang]{bai2008semidefinite}
X.~Bai, H.~Wei, K.~Fujisawa, and Y.~Wang.
\newblock Semidefinite programming for optimal power flow problems.
\newblock \emph{Int J. Electr. Power Energy Syst.}, 30\penalty0 (6-7):\penalty0
  383--392, 2008.

\bibitem[Birchfield et~al.(2016)Birchfield, Xu, Gegner, Shetye, and
  Overbye]{birchfield2016grid}
A.~B. Birchfield, T.~Xu, K.~M. Gegner, K.~S. Shetye, and T.~J. Overbye.
\newblock Grid structural characteristics as validation criteria for synthetic
  networks.
\newblock \emph{IEEE Trans. Power Syst.}, 32\penalty0 (4):\penalty0 3258--3265,
  2016.

\bibitem[Biswas and Ye(2004)]{biswas2004semidefinite}
P.~Biswas and Y.~Ye.
\newblock Semidefinite programming for ad hoc wireless sensor network
  localization.
\newblock In \emph{Proceedings of the 3rd international symposium on
  Information processing in sensor networks}, pages 46--54, 2004.

\bibitem[Borchers(1999)]{borchers1999sdplib}
B.~Borchers.
\newblock Sdplib 1.2, a library of semidefinite programming test problems.
\newblock \emph{Optim. Methods Softw.}, 11\penalty0 (1-4):\penalty0 683--690,
  1999.

\bibitem[Boumal et~al.(2020)Boumal, Voroninski, and
  Bandeira]{boumal2020deterministic}
N.~Boumal, V.~Voroninski, and A.~S. Bandeira.
\newblock Deterministic guarantees for burer-monteiro factorizations of smooth
  semidefinite programs.
\newblock \emph{Commun. Pure Appl. Math.}, 73\penalty0 (3):\penalty0 581--608,
  2020.

\bibitem[Burer and Monteiro(2003)]{burer2003nonlinear}
S.~Burer and R.~D. Monteiro.
\newblock A nonlinear programming algorithm for solving semidefinite programs
  via low-rank factorization.
\newblock \emph{Math. Program.}, 95\penalty0 (2):\penalty0 329--357, 2003.

\bibitem[Dancis(1992)]{dancis1992positive}
J.~Dancis.
\newblock Positive semidefinite completions of partial hermitian matrices.
\newblock \emph{Linear Algebra Its Appl.}, 175:\penalty0 97--114, 1992.

\bibitem[de~Klerk et~al.(2000)de~Klerk, Terlaky, and Roos]{de2000self}
E.~de~Klerk, T.~Terlaky, and K.~Roos.
\newblock Self-dual embeddings.
\newblock In \emph{Handbook of Semidefinite Programming}, pages 111--138.
  Springer, 2000.

\bibitem[Dong et~al.(2021)Dong, Lee, and Ye]{dong2021nearly}
S.~Dong, Y.~T. Lee, and G.~Ye.
\newblock A nearly-linear time algorithm for linear programs with small
  treewidth: a multiscale representation of robust central path.
\newblock In \emph{Proceedings of the 53rd Annual ACM SIGACT Symposium on
  Theory of Computing}, pages 1784--1797, 2021.

\bibitem[Eltved et~al.(2019)Eltved, Dahl, and Andersen]{Eltved2019}
A.~Eltved, J.~Dahl, and M.~S. Andersen.
\newblock On the robustness and scalability of semidefinite relaxation for
  optimal power flow problems.
\newblock \emph{Optim. Eng.}, Mar 2019.
\newblock ISSN 1573-2924.
\newblock \doi{10.1007/s11081-019-09427-4}.

\bibitem[Fomin et~al.(2018)Fomin, Lokshtanov, Saurabh, Pilipczuk, and
  Wrochna]{fomin2018fully}
F.~V. Fomin, D.~Lokshtanov, S.~Saurabh, M.~Pilipczuk, and M.~Wrochna.
\newblock Fully polynomial-time parameterized computations for graphs and
  matrices of low treewidth.
\newblock \emph{ACM Trans. Algorithms}, 14\penalty0 (3):\penalty0 1--45, 2018.

\bibitem[Frieze and Jerrum(1997)]{frieze1997improved}
A.~Frieze and M.~Jerrum.
\newblock Improved approximation algorithms for {MAX k-CUT} and {MAX
  BISECTION}.
\newblock \emph{Algorithmica}, 18\penalty0 (1):\penalty0 67--81, 1997.

\bibitem[Fukuda et~al.(2001)Fukuda, Kojima, Murota, and
  Nakata]{fukuda2001exploiting}
M.~Fukuda, M.~Kojima, K.~Murota, and K.~Nakata.
\newblock Exploiting sparsity in semidefinite programming via matrix completion
  {I}: {G}eneral framework.
\newblock \emph{SIAM J. Optim.}, 11\penalty0 (3):\penalty0 647--674, 2001.

\bibitem[Fulkerson and Gross(1965)]{fulkerson1965incidence}
D.~Fulkerson and O.~Gross.
\newblock Incidence matrices and interval graphs.
\newblock \emph{Pacific journal of mathematics}, 15\penalty0 (3):\penalty0
  835--855, 1965.

\bibitem[George and Liu(1981)]{george1981computer}
A.~George and J.~W. Liu.
\newblock \emph{Computer solution of large sparse positive definite}.
\newblock Prentice Hall Professional Technical Reference, 1981.

\bibitem[Goemans and Williamson(1995)]{goemans1995improved}
M.~X. Goemans and D.~P. Williamson.
\newblock Improved approximation algorithms for maximum cut and satisfiability
  problems using semidefinite programming.
\newblock \emph{J. ACM}, 42\penalty0 (6):\penalty0 1115--1145, 1995.

\bibitem[Grone et~al.(1984)Grone, Johnson, S{\'a}, and
  Wolkowicz]{grone1984positive}
R.~Grone, C.~R. Johnson, E.~M. S{\'a}, and H.~Wolkowicz.
\newblock Positive definite completions of partial hermitian matrices.
\newblock \emph{Linear Algebra Its Appl.}, 58:\penalty0 109--124, 1984.

\bibitem[Gu and Song(2022)]{gu2022faster}
Y.~Gu and Z.~Song.
\newblock A faster small treewidth sdp solver.
\newblock \emph{arXiv:2211.06033}, 2022.

\bibitem[Jabr(2012)]{jabr2012exploiting}
R.~A. Jabr.
\newblock Exploiting sparsity in {SDP} relaxations of the {OPF} problem.
\newblock \emph{IEEE Trans. Power Syst.}, 27\penalty0 (2):\penalty0 1138--1139,
  2012.

\bibitem[Kim et~al.(2009)Kim, Kojima, and Waki]{kim2009exploiting}
S.~Kim, M.~Kojima, and H.~Waki.
\newblock Exploiting sparsity in sdp relaxation for sensor network
  localization.
\newblock \emph{SIAM J. Optim.}, 20\penalty0 (1):\penalty0 192--215, 2009.

\bibitem[Kim et~al.(2011)Kim, Kojima, Mevissen, and
  Yamashita]{kim2011exploiting}
S.~Kim, M.~Kojima, M.~Mevissen, and M.~Yamashita.
\newblock Exploiting sparsity in linear and nonlinear matrix inequalities via
  positive semidefinite matrix completion.
\newblock \emph{Math. Program.}, 129\penalty0 (1):\penalty0 33--68, 2011.

\bibitem[Kobayashi et~al.(2008)Kobayashi, Kim, and
  Kojima]{kobayashi2008correlative}
K.~Kobayashi, S.~Kim, and M.~Kojima.
\newblock Correlative sparsity in primal-dual interior-point methods for lp,
  sdp, and socp.
\newblock \emph{Appl. Math. Optim.}, 58\penalty0 (1):\penalty0 69--88, 2008.

\bibitem[Lasserre(2001)]{lasserre2001global}
J.~B. Lasserre.
\newblock Global optimization with polynomials and the problem of moments.
\newblock \emph{SIAM J. Optim.}, 11\penalty0 (3):\penalty0 796--817, 2001.

\bibitem[Lavaei and Low(2012)]{lavaei2012zero}
J.~Lavaei and S.~H. Low.
\newblock Zero duality gap in optimal power flow problem.
\newblock \emph{IEEE Trans. Power Syst.}, 27\penalty0 (1):\penalty0 92, 2012.

\bibitem[Lov{\'a}sz(1979)]{lovasz1979shannon}
L.~Lov{\'a}sz.
\newblock On the shannon capacity of a graph.
\newblock \emph{IEEE Trans. Inf. Theory}, 25\penalty0 (1):\penalty0 1--7, 1979.

\bibitem[Madani et~al.(2016)Madani, Ashraphijuo, and
  Lavaei]{madani2016promises}
R.~Madani, M.~Ashraphijuo, and J.~Lavaei.
\newblock Promises of conic relaxation for contingency-constrained optimal
  power flow problem.
\newblock \emph{IEEE Trans. Power Syst.}, 31\penalty0 (2):\penalty0 1297--1307,
  2016.

\bibitem[Maniu et~al.(2019)Maniu, Senellart, and Jog]{maniu2019experimental}
S.~Maniu, P.~Senellart, and S.~Jog.
\newblock An experimental study of the treewidth of real-world graph data.
\newblock In \emph{22nd International Conference on Database Theory (ICDT
  2019)}. Schloss Dagstuhl-Leibniz-Zentrum fuer Informatik, 2019.

\bibitem[Molzahn and Hiskens(2019)]{molzahn2019survey}
D.~K. Molzahn and I.~A. Hiskens.
\newblock \emph{A survey of relaxations and approximations of the power flow
  equations}.
\newblock Now Publishers, 2019.

\bibitem[{MOSEK ApS}(2019)]{mosek2019}
{MOSEK ApS}.
\newblock \emph{{The MOSEK optimization toolbox for MATLAB manual. Version
  9.0.}}, 2019.

\bibitem[Nakata et~al.(2003)Nakata, Fujisawa, Fukuda, Kojima, and
  Murota]{nakata2003exploiting}
K.~Nakata, K.~Fujisawa, M.~Fukuda, M.~Kojima, and K.~Murota.
\newblock Exploiting sparsity in semidefinite programming via matrix completion
  {II}: {I}mplementation and numerical results.
\newblock \emph{Math. Program.}, 95\penalty0 (2):\penalty0 303--327, 2003.

\bibitem[Nesterov and Todd(1998)]{nesterov1998primal}
Y.~E. Nesterov and M.~J. Todd.
\newblock Primal-dual interior-point methods for self-scaled cones.
\newblock \emph{SIAM J. Optim.}, 8\penalty0 (2):\penalty0 324--364, 1998.

\bibitem[Parrilo(2000)]{parrilo2000structured}
P.~A. Parrilo.
\newblock \emph{Structured semidefinite programs and semialgebraic geometry
  methods in robustness and optimization}.
\newblock PhD thesis, California Institute of Technology, 2000.

\bibitem[Pataki and Schmieta(2002)]{pataki2002dimacs}
G.~Pataki and S.~Schmieta.
\newblock The dimacs library of semidefinite-quadratic-linear programs, 2002.

\bibitem[Sturm(1999)]{sturm1999using}
J.~F. Sturm.
\newblock Using sedumi 1.02, a matlab toolbox for optimization over symmetric
  cones.
\newblock \emph{Optim. Method. Softw.}, 11\penalty0 (1-4):\penalty0 625--653,
  1999.

\bibitem[Sturm(2002)]{sturm2002implementation}
J.~F. Sturm.
\newblock Implementation of interior point methods for mixed semidefinite and
  second order cone optimization problems.
\newblock \emph{Optim. Methods Softw.}, 17\penalty0 (6):\penalty0 1105--1154,
  2002.

\bibitem[Sun(2015)]{sun2015decomposition}
Y.~Sun.
\newblock \emph{Decomposition methods for semidefinite optimization}.
\newblock PhD thesis, UCLA, 2015.

\bibitem[Sun et~al.(2014)Sun, Andersen, and Vandenberghe]{sun2014decomposition}
Y.~Sun, M.~S. Andersen, and L.~Vandenberghe.
\newblock Decomposition in conic optimization with partially separable
  structure.
\newblock \emph{SIAM J. Optim.}, 24\penalty0 (2):\penalty0 873--897, 2014.

\bibitem[Todd et~al.(1998)Todd, Toh, and T{\"u}t{\"u}nc{\"u}]{todd1998nesterov}
M.~J. Todd, K.-C. Toh, and R.~H. T{\"u}t{\"u}nc{\"u}.
\newblock On the nesterov--todd direction in semidefinite programming.
\newblock \emph{SIAM J. Optim.}, 8\penalty0 (3):\penalty0 769--796, 1998.

\bibitem[Toh et~al.(1999)Toh, Todd, and T{\"u}t{\"u}nc{\"u}]{toh1999sdpt3}
K.-C. Toh, M.~J. Todd, and R.~H. T{\"u}t{\"u}nc{\"u}.
\newblock Sdpt3--a matlab software package for semidefinite programming,
  version 1.3.
\newblock \emph{Optim. Methods Softw.}, 11\penalty0 (1-4):\penalty0 545--581,
  1999.

\bibitem[T{\"u}t{\"u}nc{\"u} et~al.(2003)T{\"u}t{\"u}nc{\"u}, Toh, and
  Todd]{tutuncu2003solving}
R.~H. T{\"u}t{\"u}nc{\"u}, K.-C. Toh, and M.~J. Todd.
\newblock Solving semidefinite-quadratic-linear programs using sdpt3.
\newblock \emph{Math. Program.}, 95:\penalty0 189--217, 2003.

\bibitem[Vandenberghe and Andersen(2015)]{vandenberghe2015chordal}
L.~Vandenberghe and M.~S. Andersen.
\newblock Chordal graphs and semidefinite optimization.
\newblock \emph{Foundations and Trends in Optimization}, 1\penalty0
  (4):\penalty0 241--433, 2015.

\bibitem[Waki et~al.(2006)Waki, Kim, Kojima, and Muramatsu]{waki2006sums}
H.~Waki, S.~Kim, M.~Kojima, and M.~Muramatsu.
\newblock Sums of squares and semidefinite program relaxations for polynomial
  optimization problems with structured sparsity.
\newblock \emph{SIAM J. Optim.}, 17\penalty0 (1):\penalty0 218--242, 2006.

\bibitem[Zhang and Lavaei(2020)]{zhang2020sparse}
R.~Y. Zhang and J.~Lavaei.
\newblock Sparse semidefinite programs with guaranteed near-linear time
  complexity via dualized clique tree conversion.
\newblock \emph{Math. Program.}, pages 1--43, 2020.

\bibitem[Zheng et~al.(2020)Zheng, Fantuzzi, Papachristodoulou, Goulart, and
  Wynn]{zheng2020chordal}
Y.~Zheng, G.~Fantuzzi, A.~Papachristodoulou, P.~Goulart, and A.~Wynn.
\newblock Chordal decomposition in operator-splitting methods for sparse
  semidefinite programs.
\newblock \emph{Math. Program.}, 180\penalty0 (1):\penalty0 489--532, 2020.

\bibitem[Zimmerman et~al.(2011)Zimmerman, Murillo-S{\'a}nchez, and
  Thomas]{zimmerman2011matpower}
R.~D. Zimmerman, C.~E. Murillo-S{\'a}nchez, and R.~J. Thomas.
\newblock Matpower: Steady-state operations, planning, and analysis tools for
  power systems research and education.
\newblock \emph{IEEE Trans. Power Syst.}, 26\penalty0 (1):\penalty0 12--19,
  2011.

\end{thebibliography}

\end{document}